\theoremstyle{plain}
\newtheorem{theorem}{Theorem}[section]
\newtheorem{lemma}[theorem]{Lemma}
\newtheorem{definition-theorem}[theorem]{Definition-Theorem}
\newtheorem{proposition}[theorem]{Proposition}
\newtheorem{question}[theorem]{Question}
\newtheorem{corollary}[theorem]{Corollary}
\newtheorem{definition}[theorem]{Definition}
\newtheorem{example}[theorem]{Example}
\newtheorem{remark}[theorem]{Remark}
\newtheorem{remarks}[theorem]{Remarks}
\newtheorem{conjecture}[theorem]{Conjecture}
\newtheorem{Thm}{Theorem}[section]
\newtheorem{Prop}[Thm]{Proposition}
\newtheorem{Lem}[Thm]{Lemma}
\newtheorem{Cor}[Thm]{Corollary}
\theoremstyle{definition} % no italic text in these
\newtheorem{Def}[Thm]{Definition}
\newtheorem{Ex}[Thm]{Example}
\newtheorem{Exs}[Thm]{Examples}
\newtheorem{Rmk}[Thm]{Remark}
\newtheorem{Rmks}[Thm]{Remarks}
\newtheorem{Qtn}[Thm]{Question}
\newtheorem{Qtns}[Thm]{Questions}
\def\ker{{\rm ker \,}}
\def\char{{\rm char \,}}
\def\co{{\rm co \,}}
\def\mm{\mathbf{m}}
\def\OO{\mathcal{O}}
\def\gg{\mathfrak{g}}
\newcommand \bth[1] { \begin{theorem}\label{t#1} }
\newcommand \ble[1] { \begin{lemma}\label{l#1} }
\newcommand \bpr[1] { \begin{proposition}\label{p#1} }
\newcommand \bqu[1] { \begin{question}\label{q#1} }
\newcommand \bco[1] { \begin{corollary}\label{c#1} }
\newcommand \bde[1] { \begin{definition}\label{d#1}\rm }
\newcommand \bex[1] { \begin{example}\label{e#1}\rm }
\newcommand \bre[1] { \begin{remark}\label{r#1}\rm }
\newcommand \bres[1] { \begin{remarks}\label{r#1}\rm }
\newcommand \bcj[1] { \begin{conjecture}\label{j#1}\rm }
\renewcommand {\eth} { \end{theorem} }
\newcommand {\ele} { \end{lemma} }
\newcommand {\epr} { \end{proposition} }
\newcommand {\equ} {\end{question} }
\newcommand {\eco} { \end{corollary} }
\newcommand {\ede} { \end{definition} }
\newcommand {\eex} { \end{example} }
\newcommand {\ere} { \end{remark} }
\newcommand {\eres} { \end{remarks} }
\newcommand {\ecj} { \end{conjecture} }
\newcommand {\enota} { \end{notation} }
\def \OO {{\mathcal{O}}}
\def \mm  {\mathfrak{m}}
\DeclareMathOperator \maxspec { {\mathrm{Maxspec}}}
\DeclareMathOperator \Ann { {\mathrm{Ann}} }
\DeclareMathOperator \Ext { {\mathrm{Ext}} }
\DeclareMathOperator \GKdim {{\mathrm{GK \, dim}}}
\DeclareMathOperator \Kdim {{\mathrm{K \, dim}}}
\DeclareMathOperator \gldim { {\mathrm{gl.dim}} }
\DeclareMathOperator \prdim { {\mathrm{pr.dim}} }
\begin{document}

%%%%%%%%%%%%%%%%%%%%%%%%%%%%%%%%%%%%%%%%%%%%%%%%%%%%%%%%%%%%%%%%%%%%%%%%%%%
%%%%%%%%%%%%%%%%%%%%%%    Title    %%%%%%%%%%%%%%%%%%%%%%%%%%%%%%%%%%%%%%%%

\title[PI Hopf algebras]
{Affine commutative-by-finite Hopf algebras}
\author[K. A. Brown]{K. A. Brown}
\address{School of Mathematics and Statistics \\
University of Glasgow \\
Glasgow G12 8QQ, Scotland }
\email{Ken.Brown@glasgow.ac.uk}
\author[M. Couto]{M. Couto}
\thanks{The research of Ken Brown was partially supported by a grant from the Leverhulme Foundation, Emeritus Fellowship EM-2017-081. The PhD research of Miguel Couto was supported by a grant of the Portuguese Foundation for Science and Technology, SFRH/BD/102119/2014.}
\address{School of Mathematics and Statistics\\
University of Glasgow \\
Glasgow G12 8QQ, Scotland }
\email{M.Couto.1@research.glasgow.ac.uk}
\date{}
\keywords{Hopf algebra, polynomial identity}
\subjclass[2010]{Primary 16T05; Secondary 16T20, 16Rxx, 16Gxx.}

\begin{abstract} The objects of study in this paper are Hopf algebras $H$ which are finitely generated algebras over an algebraically closed field and are extensions of a commutative Hopf algebra by a finite dimensional Hopf algebra. Basic structural and homological properties are recalled and classes of examples are listed. Bounds are obtained on the dimensions of simple $H$-modules, and the structure of $H$ is shown to be severely constrained when the finite dimensional extension is semisimple and cosemisimple.
\end{abstract}
\maketitle

\section{Introduction}
\subsection{}\label{intro1} A Hopf algebra $H$ over the algebraically closed field $k$ is \emph{affine commutative-by-finite} if it is a finitely generated module over a normal commutative finitely generated Hopf subalgebra $A$. In this context, to say that $A$ is \emph{normal} means that it is closed under the adjoint actions of $H$, see $\S$\ref{PIdefn}. As has long been understood and is recalled in $\S$\ref{PIdefn}, such an algebra $H$ should be viewed as an extension of the affine commutative Hopf subalgebra $A$ by the finite dimensional Hopf algebra $\overline{H}:=H/A^+ H$, where $A^+$ denotes the augmentation ideal of $A$.

\subsection{}\label{intro2} This paper is first of a series in which we treat the class of affine commutative-by-finite Hopf $k$-algebras as a laboratory for testing hypotheses about all Hopf algebras of finite Gelfand-Kirillov dimension.

With that more general aim in mind, we first (in $\S$2) review and organise the known properties of commutative-by-finite Hopf algebras, including finiteness conditions, homological properties and representation theory. Most of the results of this section are not new, but are gathered from a number of sources, for example \cite{Br98}, \cite{BrGood}, \cite{LWZ}, \cite{Sk04}, \cite{WZ}, \cite{WuZh}. Then, in $\S$3, we list and describe many important families of these algebras. The sources here include  \cite{BrZh}, \cite{ConProc}, \cite{GoodZh}, \cite{Jacob}, \cite{Liu}. We also recall an example due to Gelaki and Letzter \cite{GL} of a prime Hopf algebra, finite over its affine centre, which is \emph{not} commutative-by-finite.

The material in $\S \S$\ref{stability} and \ref{prime} is crucial to the later results in the paper, including the theorems stated below. In $\S$\ref{stability} work of Skryabin \cite{Sk04, Sk07} and of Montgomery and Schneider \cite{MS} is applied to the action of $\overline{H}$ on $\maxspec (A)$, focussing on the concept of an $\overline{H}$-\emph{orbit} of maximal ideals. In particular, the $\overline{H}$-\emph{core} $\mathfrak{m}^{(\overline{H})}$ of a maximal ideal $\mm$ of $A$, which features in Theorem \ref{introthm2} below, is defined here as the biggest $\overline{H}$-invariant ideal of $A$ contained in $\mm$. The (finite) set of maximal ideals of $A$ which contain $\mm^{(\overline{H})}$ is - by definition - the $\overline{H}$-\emph{orbit} of $\mm$. $\S$\ref{prime} contains analysis of the action of $\overline{H}$ on the nilradical and minimal primes of $A$, and applies this to study the surprisingly strict relation between (semi)primeness of $H$ and of $A$.

%\begin{Thm}
%Let $H$ be an affine commutative-by-finite Hopf algebra, finite over the normal commutative semiprime Hopf subalgebra $A$. If $H$ is prime, then $A$ is a domain.
%\end{Thm}
 
Most of the new results are in $\S\S$ 6 and 7. In $\S$7 we focus on the case where the finite dimensional Hopf factor $\overline{H}$ can be chosen to be semisimple and cosemisimple. Results of Etingof, Walton and Skryabin \cite{EW, Sk17} are crucial to the key message about this class of algebras, which is that the underlying noncommutativity is generated by the action of a finite group. Theorem \ref{semi}, the main result of $\S 7$, is rather complex to state; the following is the special case where $H$ is assumed to be prime, avoiding many of the technicalities. 

\begin{Thm}[{Corollary \ref{primesemis}}]\label{introthm1} Let $H$ be a prime commutative-by-finite Hopf algebra with affine commutative normal Hopf subalgebra $A$, such that $\overline{H} = H/A^+H$ is semisimple and cosemisimple.  Then $A$ is a domain, and after replacing $A$ by a larger smooth commutative affine domain $D$ which is a normal left coideal subalgebra of $H$,
\begin{enumerate}
\item $H/D^+ H \cong k\Gamma$ for a finite group $\Gamma$ whose order is a unit in $k$;
\item the adjoint action of $\overline{H}$ on $A$ factors through $k\Gamma$, and $\Gamma$ acts faithfully on $A$ via the adjoint action;
\item There exists another group algebra factor $k\Lambda$ of $\overline{H}$, such that $\Lambda$ acts faithfully on $D$ via the left adjoint action, and $\Gamma$ is a factor of $\Lambda$.
\item Suppose in addition that $H$ is pointed. Then $H$ is a crossed product of $D$ by $k \Gamma$, that is, $H\cong D\#_\sigma k\Gamma$ for some cocycle $\sigma$.
\end{enumerate}
\end{Thm}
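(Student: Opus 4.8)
The plan is to obtain this as the prime specialisation of Theorem~\ref{semi}: the hypothesis that $H$ is prime forces the (necessarily more intricate) general conclusions of that theorem to collapse to the clean picture stated. I would organise it in three parts, plus the pointed case.

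\emph{That $A$ is a domain.} Since $H$ is prime it is semiprime, so $A$ is semiprime by the comparison of nilradicals of $H$ and $A$ in $\S\ref{prime}$. As $H$ is module-finite and faithfully flat over $A$ (see $\S2$), the contraction map $\Spec H\to\Spec A$ is surjective; since $\Spec H$ is irreducible ($H$ being prime), so is $\Spec A$, and hence the reduced ring $A$ has a unique minimal prime, i.e.\ it is a domain. Being an affine commutative domain over the algebraically closed — hence perfect — field $k$, $A$ is then the coordinate ring of a connected affine algebraic group, which is smooth because it is reduced over a perfect field; so $A$ is a regular (smooth) domain.

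\emph{Construction of $D$ and $\Gamma$; parts (1), (2), (3).} Because $A$ is normal, $\overline{H}=H/A^{+}H$ acts on $A$ by the left adjoint action, making $A$ an $\overline{H}$-module algebra; let $\overline{H}\twoheadrightarrow\overline{H}/J$ be the quotient by the largest Hopf ideal $J$ acting trivially. By the rigidity theorems of Etingof--Walton and Skryabin \cite{EW,Sk17} for (co)semisimple Hopf actions on commutative domains, $\overline{H}/J\cong k\Gamma$ for a finite group $\Gamma$ acting faithfully on $A$ — this is (2) — and, being a Hopf quotient of the semisimple $\overline{H}$, $k\Gamma$ is semisimple, so $|\Gamma|$ is a unit in $k$. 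Set $\pi\colon H\twoheadrightarrow\overline{H}\twoheadrightarrow k\Gamma$ and $D:={}^{\co\pi}H=\{h\in H:(\pi\otimes\id)\Delta(h)=1\otimes h\}$, a left coideal subalgebra containing $A$ with $H/D^{+}H\cong k\Gamma$ (given faithful flatness of $H$ over $D$), which is (1). That $D$ is normal and \emph{commutative}, and that $H$ is faithfully flat over $D$, are provided by Theorem~\ref{semi}. Then $D$ is smooth: $A$ being regular and $\overline{H}$ semisimple, $H$ has finite global dimension (the homological facts in $\S2$), and this is inherited by the coideal subalgebra $D$ over which $H$ is faithfully flat, so the affine commutative domain $D$ is regular. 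Finally, repeating the first half of this step with $A$ replaced by the larger commutative domain $D$: by \cite{EW,Sk17} the adjoint action of $\overline{H}$ on $D$ factors through a group algebra $k\Lambda$ acting faithfully on $D$, and since $A\subseteq D$ is $\overline{H}$-stable with the $A$-action the restriction of the $D$-action, the kernel of the action on $D$ is contained in $J$; hence $k\Gamma$ is a Hopf quotient of $k\Lambda$ and $\Gamma$ is a quotient group of $\Lambda$, which is (3).

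\emph{The pointed case; part (4), and the expected obstacle.} If $H$ is pointed then so is its Hopf quotient $\overline{H}$, which — being pointed and cosemisimple — is the group algebra $k\overline{G}$ of its grouplikes; hence $\Gamma$ is a quotient of $\overline{G}$, and $\pi$ maps the group $G(H)$ of grouplikes of $H$ onto $\Gamma$. Lifting each $g\in\Gamma$ to a grouplike $u_{g}\in H$ produces a convolution-invertible, right $k\Gamma$-colinear section of $\pi$, so the extension $D\subseteq H\twoheadrightarrow k\Gamma$ is cleft; by the standard equivalence of cleft extensions with crossed products this yields $H\cong D\#_{\sigma}k\Gamma$, with cocycle $\sigma(g,h)=u_{g}u_{h}u_{gh}^{-1}\in D$. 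I expect the main obstacle to be the identification of $D$ as a \emph{commutative} and \emph{smooth} left coideal subalgebra: commutativity ultimately rests on the structure imposed by Theorem~\ref{semi}/Skryabin, and smoothness on the homological transfer from $H$ to $D$; verifying in part~(4) that pointedness genuinely forces the cleaving section to exist is the other delicate point. By contrast the domain statement is immediate from $\S\ref{prime}$, and the rigidity input \cite{EW,Sk17} together with the crossed-product formalism are routine to apply once $D$ is in hand.
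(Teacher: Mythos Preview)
Your overall strategy — specialising Theorem~\ref{semi} to the prime case — is exactly the paper's approach, and most of what you write is correct. However, there is one genuine gap and two smaller issues worth flagging.

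\textbf{The gap: your argument that $A$ is a domain.} You claim that primeness of $H$ makes $\Spec H$ irreducible and that the contraction map $\Spec H\to\Spec A$ is surjective, whence $\Spec A$ is irreducible. But for noncommutative $H$ with $A$ merely normal (not central), there is no well-defined contraction map: if $P$ is a prime of $H$ and $a,b\in A$ with $ab\in P$, one cannot conclude $aHb\subseteq P$, so $P\cap A$ need not be prime. (Concretely: the diagonal matrices inside $M_2(k)$ show that a prime ring can contain a non-domain commutative subring, even one over which it is free.) The paper instead reads the conclusion off Theorem~\ref{semi} directly: $H$ prime forces $Q_1=\{0\}$, whence $L=Q_1\cap D=\{0\}$ by part~(11), and then $P=L\cap A=\{0\}$ by~(\ref{caught}); so $A=A/P$ is a domain and $D=D/L$ is the commutative affine domain of part~(9). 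This is both shorter and correct.

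\textbf{Left versus right coinvariants.} You set $D={}^{\co\pi}H$, the left coinvariants; this is a \emph{right} coideal subalgebra, whereas the statement asks for a left coideal subalgebra. The paper (Theorem~\ref{semi}(4)(iii)) takes $D=H^{\co\alpha}$, the right coinvariants, and it is this $D$ that is a left coideal subalgebra invariant under the left adjoint action; the faithful flatness and the identification $H/D^+H\cong k\Gamma$ then come from \cite{MulSchn}.

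\textbf{The pointed case.} Your explicit cleaving via lifted grouplikes is in the right spirit, but the step ``$G(H)$ surjects onto $\Gamma$'' is not quite free: one needs that for pointed $H$ the coradical $kG(H)$ surjects onto that of any Hopf quotient. The paper sidesteps this by invoking Schneider's theorem \cite[Corollary~4.3]{Schn92} (see Remark~\ref{propremarks}(4) and Theorem~\ref{semi}(13)) to obtain the crossed product $H\cong D\#_\tau k\Gamma$ directly. Either route works once justified; the paper's is cleaner.

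The remainder of your outline (rigidity via \cite{EW,Sk17} for $\Gamma$, the group $\Lambda$ via the action on the larger domain $D$, and $\Gamma$ as a quotient of $\Lambda$) matches Theorem~\ref{semi}(4)(iii) and (10)(ii), and your deferral of the commutativity and smoothness of $D$ to that theorem is exactly what the paper does.
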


The results of $\S$6 concern the dimensions of simple modules. Invariant-theoretic results of Skryabin \cite{Sk10} are used to show that, given an affine commutative-by-finite Hopf algebra $H$ with commutative normal Hopf subalgebra $A$, and a simple $H$-module $V$, there exists an $H$-invariant Frobenius algebra factor of $A$, denoted by $A/\mathfrak{m}^{(\overline{H})}$, which acts faithfully on $V$. The following consequence is a simplified special case of Theorems \ref{bound} and \ref{dims}. Recall that if $R$ is a prime affine noetherian algebra which is a finite module over its centre, then the \emph{PI-degree} of  $R$, denoted by $\mathrm{PI.deg}(R)$, is the maximum dimension over $k$ of the simple $R$-modules, \cite[Theorem I.13.5, Lemma III.1.2]{BrGoodbook}.

\begin{Thm}\label{introthm2} Let $H$, $A$ and $V$ be as above, and assume that $A$ is semiprime (as is the case in characteristic 0, for example) and that $H$ is prime. 
\begin{enumerate}
\item There is an $H$-invariant ideal $\mathfrak{m}^{(\overline{H})}$ of $A$, an invariant of $V$, such that the Frobenius algebra $A/\mathfrak{m}^{(\overline{H})}$ embeds in $V$ as an $A$-module. Hence 
$$ \mathrm{dim}_k(A/\mathfrak{m}^{(\overline{H})}) \leq \mathrm{dim}_k(V) \leq \mathrm{dim}_k (\overline{H}). $$
\item Suppose that $\overline{H}$ is semisimple and cosemisimple. Then, in the notation of Theorem  \ref{intro1},
$$ \mathrm{PI.deg} (H) = |\Gamma|. $$
\item Suppose that $\overline{H}$ is semisimple and cosemisimple and that $H$ is pointed. Then (using again the notation of Theorem \ref{intro1}) there is a maximal ideal $\mathfrak{m}$ of $D$ and a positive integer $\ell$, both depending on $V$, such that
$$ \ell |\Gamma : C_{\Gamma}(\mathfrak{m})| = \mathrm{dim}_k (V) \leq |\Gamma|, $$
where $C_{\Gamma}(\mathfrak{m})$ denotes the centraliser of $\mathfrak{m}$ in $\Gamma$.
\end{enumerate}
\end{Thm}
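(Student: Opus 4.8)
The plan is to peel $H$ down to a finite-dimensional semisimple algebra on which $V$ survives as a simple faithful module, and then read off the dimension formulae from Clifford theory for the resulting crossed product. Throughout I use that $H$, being affine and a finite module over the commutative affine algebra $A$, is a noetherian PI ring finite over the affine central subalgebra $A^{\overline H}\subseteq Z(H)$; in particular every simple $H$-module is finite dimensional and $\mathrm{PI.deg}(H)=\max\{\dim_k V: V\ \mathrm{simple}\}$.

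\textbf{Part (1).} Restrict $V$ to $A$. Since $A$ is normal, $\Ann_A(V)$ is an $H$-stable ideal, so $\Supp_A(V)$ is a union of $\overline H$-orbits; as the $H$-submodule generated by the primary component of $V|_A$ at a maximal ideal $\mm$ sits inside the sum of the primary components over the $\overline H$-orbit of $\mm$, simplicity forces $\Supp_A(V)$ to be exactly that single orbit, which by $\S 4$ is the set of maximal ideals containing the core $\mathfrak m^{(\overline H)}$. Now $\mathfrak m^{(\overline H)}H$ is a two-sided ideal of $H$ (normality), so $\mathfrak m^{(\overline H)}V=(\mathfrak m^{(\overline H)}H)V$ is an $H$-submodule of $V$ lying inside $\mathfrak m V\subsetneq V$, hence $\mathfrak m^{(\overline H)}V=0$ and $V$ is a faithful module over $A/\mathfrak m^{(\overline H)}$. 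By the invariant theory of $\S 5$ (Skryabin \cite{Sk10}) this algebra is Frobenius; being commutative and finite dimensional it is a product of local Frobenius algebras, each an indecomposable module over $A/\mathfrak m^{(\overline H)}$. A faithful module over a quasi-Frobenius algebra is a generator, so $A/\mathfrak m^{(\overline H)}$ is a direct summand of $V^{(n)}$ for some $n$, and Krull--Schmidt together with the indecomposability of the local factors then makes $A/\mathfrak m^{(\overline H)}$ a direct summand (in particular a submodule) of $V|_A$; this gives the left-hand inequality. For the right-hand one, choose a one-dimensional $A$-submodule $kv\cong A/\mm$ of $V|_A$ (possible, $A$ commutative affine and $k$ algebraically closed); then $V=Hv$ is an epimorphic image of $H\otimes_A(A/\mm)=H/\mm H$, and since $A\subseteq H$ is a faithfully flat $\overline H$-Galois extension ($\S 2$), base-changing the Galois isomorphism $H\otimes_A H\cong H\otimes_k\overline H$ along $A\to A/\mm=k$ gives $B\otimes_k B\cong B\otimes_k\overline H$ for $B:=H/\mm H\neq 0$, whence $\dim_k(H/\mm H)=\dim_k\overline H$ and $\dim_k V\le\dim_k\overline H$.

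\textbf{Parts (2) and (3).} Replace $A$ by $D$ as in Theorem \ref{introthm1}, so $\overline H$ becomes $k\Gamma$. As $D$ is a domain it is reduced, so the core $\mathfrak m^{(\overline H)}$ of a maximal ideal $\mm$ of $D$ is the intersection $\bigcap_g g\!\cdot\!\mm$ over the finite $\Gamma$-orbit of $\mm$, and $D/\mathfrak m^{(\overline H)}\cong k^{\,\Gamma/C_\Gamma(\mm)}$ where $C_\Gamma(\mm)$ is the stabiliser. As in Part (1), any simple $V$ is a module over the finite-dimensional algebra $\overline H_V:=H/\mathfrak m^{(\overline H)}H$, which is free of rank $|\Gamma|$ over $D/\mathfrak m^{(\overline H)}$ and is semisimple by Maschke (a crossed product of the semisimple algebra $k^{\Gamma/C_\Gamma(\mm)}$ by $\Gamma$, with $|\Gamma|$ a unit in $k$). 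When $H$ is pointed, $H\cong D\#_\sigma k\Gamma$, so $\overline H_V\cong k^{\Gamma/C_\Gamma(\mm)}\#_{\bar\sigma}k\Gamma\cong M_{[\Gamma:C_\Gamma(\mm)]}\bigl(k_{\sigma'}C_\Gamma(\mm)\bigr)$ for a twisted group algebra $k_{\sigma'}C_\Gamma(\mm)$; hence a simple $\overline H_V$-module has $k$-dimension $\ell\,[\Gamma:C_\Gamma(\mm)]$ with $\ell$ the dimension of the matching simple $k_{\sigma'}C_\Gamma(\mm)$-module, which is (3). For (2): the inequality $\mathrm{PI.deg}(H)\le|\Gamma|$ follows because $\mathrm{Frac}(D)$ is a maximal subfield of the simple artinian ring $\mathrm{Frac}(H)$ containing its centre (the centraliser $C_H(D)$ being $D$ itself, by faithfulness of the adjoint action), so $\mathrm{PI.deg}(H)=[\mathrm{Frac}(H):\mathrm{Frac}(D)]=|\Gamma|$; equality is attained by choosing $\mm$ with trivial $\Gamma$-stabiliser, which exists since the faithful $\Gamma$-action on the irreducible variety $\maxspec(D)$ has non-trivial stabilisers only on a proper closed subset, and then Part (1) forces $\dim_k V=\dim_k(D/\mathfrak m^{(\overline H)})=|\Gamma|$ for the corresponding simple module.

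\textbf{Main obstacle.} The substantive inputs — that $A/\mathfrak m^{(\overline H)}$ is Frobenius and faithful on $V$ ($\S 5$, \cite{Sk10}), the orbit description of $\Supp_A(V)$ ($\S 4$), and the structure theorem producing $D$ and $\Gamma$ (Theorem \ref{introthm1}) — may be quoted, so the real work is in the assembly. The delicate point is Part (2) in the \emph{non}-pointed case: there $H$ need not be a crossed product $D\#_\sigma k\Gamma$, only a $\Gamma$-graded algebra with $H_e=D$ free of rank $|\Gamma|$, so the clean identification $\overline H_V\cong M_{[\Gamma:C]}(k_{\sigma'}C)$ is unavailable and one must pin down $\mathrm{PI.deg}(H)$ either through graded Clifford theory or through the Hopf--Galois structure of $H$ over the coideal subalgebra $D$ (equivalently, by establishing $C_H(D)=D$ and $Z(H)\subseteq D$ so that $\mathrm{Frac}(D)$ is a maximal subfield of $\mathrm{Frac}(H)$). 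This, together with the routine but careful tracking of the cocycle $\sigma'$ and the inertial subgroup in (3), is where I expect the technical bookkeeping to be heaviest.
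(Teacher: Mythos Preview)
Your proposal is largely correct and tracks the paper's argument closely, but there are two places worth flagging.

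\textbf{Part (1), upper bound.} Your Galois base-change step is not right as written: the isomorphism $H\otimes_A H\cong H\otimes_k\overline H$ does not base-change along $A\to A/\mathfrak m$ to give $B\otimes_k B\cong B\otimes_k\overline H$, because the two $A$-actions on $H\otimes_A H$ (left on the first factor, right on the second) are not compatible in the way this would require when $A$ is not central. The paper's route (Corollary~\ref{primePI}) is the one you should use instead: since $H$ is prime and $A$ semiprime, $A$ is a domain; $H$ is then finitely generated projective over $A$, hence locally free of constant rank, and that rank computed at $A^+$ is $\dim_k\overline H$. So $\dim_k(H/H\mathfrak m)=\dim_k\overline H$ for every $\mathfrak m$, and $V$ is a quotient of $H/H\mathfrak m$.

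\textbf{Part (2).} The paper's proof is precisely your \emph{second} argument: find $\mathfrak m\in\maxspec(D)$ with trivial $\Gamma$-stabiliser (the paper is careful here, invoking generic unramifiedness of $D^\Gamma\subseteq D$, which uses smoothness of $D$ and separability of the Galois extension of fraction fields to cover positive characteristic), and then Part~(1) with $D$ in place of $A$ forces $\dim_k V=|\Gamma|$. Your \emph{first} argument via a maximal subfield is a genuinely different route, but as you yourself note, the assertion $C_H(D)=D$ is the crux and is not established: faithfulness of the $\Gamma$-action on $A$ (or inner faithfulness of the $\Lambda$-action on $D$) does not by itself give that every $h\in H$ commuting with $D$ lies in $D$, and in the non-pointed case $H$ need not decompose as $\bigoplus_g D\widehat g$ with units $\widehat g$. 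The paper sidesteps this entirely by working on the invariant-theoretic side.

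\textbf{Part (3).} Your identification $\overline H_V\cong M_{[\Gamma:C]}(k_{\sigma'}C_\Gamma(\mathfrak m))$ is the standard Clifford-theoretic refinement; the paper gives the same conclusion slightly more bare-handedly, by using the crossed-product units $\widehat\gamma_i$ to show $\mathrm{Ann}_V(\mathfrak m^{\widehat\gamma_i})=\widehat\gamma_i\,\mathrm{Ann}_V(\mathfrak m)$ and decomposing $V$ as the direct sum of these isotypic pieces.
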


\subsection{Notation}\label{notation} Throughout this paper $k$ will denote an algebraically closed field, all vector spaces are over $k$ unless stated otherwise, and all unadorned tensor products are over $k$. The \emph{Gelfand-Kirillov dimension} of an algebra $R$ will be denoted by $\GKdim (R)$. For details, refer to \cite{KL}. Recall that, for an affine noetherian algebra $R$ satisfying a polynomial identity, this dimension is always a non-negative integer, coinciding with the classical Krull dimension defined in terms of the maximum length of a chain of prime ideals, \cite[Corollary 10.16]{KL}.  The global dimension of $R$ is denoted $\gldim (R)$, and the projective dimension of an $R$-module $M$ by $\prdim (M)$. For a Hopf algebra $H$ we use the usual notation of $\Delta,\epsilon$ for the coalgebra structure, with $\Delta(h)=\sum h_1\otimes h_2$ for $h \in H$, and we use $S$ to denote its antipode. The augmentation ideal $\ker\epsilon$ of $H$ will be denoted by $H^+$. Given a Hopf surjection $\pi:H\to T$, $H$ is canonically a right (and left) $T$-comodule algebra with coaction $\rho=(id\otimes \pi)\Delta$. The subspace of right coinvariants $\lbrace h\in H: \rho(h)=h\otimes 1 \rbrace$ will be denoted by either $H^{\co \pi}$ or $H^{\co T}$. Unexplained Hopf algebra terminology can be found in \cite{Mont} or \cite{Radbook}, for example.

\bigskip

\section{Basic properties}\label{OurClass}

\subsection{Definition and initial properties}\label{PIdefn}

Recall \cite[\textsection 3.4]{Mont} that a subalgebra $K$ of a Hopf algebra $H$ is \emph{normal} if it is invariant under the left and right adjoint actions of $H$; that is, for all $k\in K$ and $h\in H$, 
$$ ad_l(h)(k)= \sum h_1 k S(h_2) \in K \qquad \text{and} \qquad ad_r(h)(k)= \sum S(h_1)kh_2 \in K. $$

\begin{Def}\label{almost} A Hopf $k$-algebra $H$ is \emph{commutative-by-finite} if it is a finite (left or right) module over a commutative normal Hopf subalgebra $A$.
\end{Def}

\begin{Rmk}\label{normal}
For an affine commutative-by-finite Hopf algebra $H$, it is enough to require that $A$ is normal on one side only. This follows from Lemma \ref{leftrightorbss}(1).
\end{Rmk}

A commutative-by-finite Hopf algebra $H$ is an extension of a commutative Hopf algebra by a finite dimensional Hopf algebra. Making this obvious but fundamental observation precise, we have the following basic facts, with corresponding notation which we shall retain henceforth.

\begin{Thm}\label{properties}
Let $H$ be a commutative-by-finite Hopf algebra, finite over the normal commutative Hopf subalgebra $A$ with augmentation ideal $A^+$.
\begin{enumerate}
\item\label{affnoeth} The following are equivalent:
\begin{enumerate}
\item $H$ is noetherian.
\item $H$ is affine.
\item $A$ is affine.
\item $A$ is noetherian.
\end{enumerate}
\item $A^+ H = HA^+$ is a Hopf ideal of $H$, and 
$$ \overline{H} \; := \; H/A^+ H $$
is a finite dimensional quotient Hopf algebra of $H$.
\item The left (resp. right) adjoint action of $H$ on $A$ factors through $\overline{H}$, so that $A$ is a left (resp. right) $\overline{H}$-module algebra.
\item $H$ satisfies a polynomial identity.
\end{enumerate}
Assume in the rest of the theorem that $H$ satisfies the equivalent conditions of (1). Denote by $\pi: H \longrightarrow \overline{H}$ the Hopf algebra surjection from $H$ to $\overline{H}$ given by (2).
\begin{enumerate}
\item[(5)] The antipode $S$ of $H$ is bijective.
\item[(6)] $ \GKdim(H) = \GKdim(A) = \Kdim(H) = \Kdim(A) < \infty. $
\item[(7)] Assume one of the following hypotheses:
\begin{enumerate}
\item[(i)] $\char k = 0$;
\item[(ii)] $A$ is semiprime;
\item[(iii)] $H$ is pointed;
\item[(iv)] $A$ is central in $H$.
\end{enumerate}
\noindent Then:
\begin{enumerate}
\item[(a)] $A\subseteq H$ is a faithfully flat $\overline{H}$-Galois extension;
\item[(b)] $A$ equals the right and the left $\overline{H}$-coinvariants of the $\overline{H}$-comodule $H$; that is,
$$ H^{\co\pi} \; = \; \,^{\co\pi}H \; = \; A;$$
\item[(c)] $H$ is a finitely generated projective generator as left and right $A$-module;
\item[(d)] $A$ is a left (resp. right) $A$-module direct summand of $H$.
\end{enumerate}
\end{enumerate}
\end{Thm}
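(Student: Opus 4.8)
The plan is to assemble the statement from standard Hopf theory together with results of Skryabin \cite{Sk04, Sk07}, Montgomery--Schneider \cite{MS}, and Wu--Zhang \cite{WZ, WuZh}, treating the items in logical rather than listed order. \textbf{Items (1)--(4).} Throughout (1) we keep the standing hypothesis that $H$ is module-finite over the commutative normal Hopf subalgebra $A$. The implication (d)$\Rightarrow$(a) is immediate (a ring module-finite over a noetherian subring is noetherian as a module over that subring, hence noetherian as a ring), and (c)$\Rightarrow$(d) is Hilbert's basis theorem; the substantive content is that affineness or noetherianity of $H$ forces $A$ to be affine, which I would obtain by an Artin--Tate-type argument using that $A$ is a normal Hopf subalgebra to control its non-centrality, referring to \cite{WZ, Br98} for the details, after which the cycle of implications closes. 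For (2), normality of $A$ gives at once that $A^{+}H = HA^{+}$ is a two-sided ideal, and it is visibly stable under $\Delta$, $\epsilon$ and $S$, hence a Hopf ideal; finite-dimensionality of $\overline{H}$ then follows from the faithful flatness established in (7), or directly from \cite{WZ}. Item (3) is formal: the one-sided adjoint actions of $H$ on itself preserve $A$ (normality) and preserve $A^{+}H$, so they descend to actions of $\overline{H} = H/A^{+}H$ on $A$, and the module-algebra axioms pass to the quotient. For (4), writing $H = \sum_{i} A h_{i}$ embeds $H$ via right multiplication into $\End_{A}({}_{A}H)$, which is module-finite over the commutative ring $A$ lying in its centre and therefore satisfies a polynomial identity; hence so does $H$.

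\textbf{Items (5)--(6).} For (5): the restriction of $S$ to the commutative Hopf algebra $A$ is an anti-automorphism of a commutative ring, hence an automorphism, and $S$ induces on the finite-dimensional $\overline{H}$ its antipode, which is bijective; bijectivity of $S$ on $H$ then follows by a standard two-out-of-three argument along the extension, or one simply cites \cite{Sk04, WZ} for noetherian affine PI Hopf algebras. For (6): $\GKdim(H) = \GKdim(A)$ since $H$ is module-finite over $A$ \cite{KL}; $\GKdim(A) = \Kdim(A) < \infty$ because $A$ is affine commutative; and $\Kdim(H) = \GKdim(H)$ because $H$ is an affine noetherian PI algebra, so all four quantities coincide and are finite.

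\textbf{Item (7), and the main obstacle.} This is where the real mathematics lies, and I would import it essentially wholesale from the literature. The crux is (a): that $A \subseteq H$ is a faithfully flat right $\overline{H}$-Galois extension. This must be argued separately under each of (i)--(iv): when $A$ is central it is classical Hopf--Galois theory for a module-finite extension by a central Hopf subalgebra; when $\charr k = 0$ or $A$ is semiprime it is a theorem of Skryabin \cite{Sk04}, resting on earlier work of Schneider and of Masuoka; and the pointed case is covered in \cite{MS} or \cite{Sk04}. Granting (a), statement (b) (coinvariants on each side equal $A$) belongs to the same package, and then (c) and (d) are formal consequences of faithfully flat Hopf--Galois descent for a Hopf algebra with bijective antipode: such an extension makes $H$ a finitely generated projective generator over $A$ on each side (Schneider's structure theorem), and the inclusion $A = H^{\co \overline{H}} \hookrightarrow H$ of left (resp. right) $A$-modules splits --- for instance by composing with the $A$-linear map built from a left integral of $\overline{H}$ --- which gives (d). The genuine obstacle in the whole theorem is precisely (7)(a) in the cases ``$A$ semiprime'' and ``$\charr k = 0$''; everything else is either bookkeeping or a direct corollary, and I would not attempt to reprove Skryabin's results here.
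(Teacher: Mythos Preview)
Your overall strategy --- assemble the result from standard Hopf theory plus a few deep inputs from the literature --- is exactly what the paper does, and items (1)--(6) are handled acceptably (with one cosmetic point: finite-dimensionality of $\overline{H}$ in (2) is immediate from $H$ being module-finite over $A$, since $\overline{H} \cong H \otimes_A k$; no forward reference to (7) is needed).

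However, there is a genuine gap in your argument for (7)(d), and your attributions in (7)(a) are off in a way that matters.

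\textbf{The gap in (7)(d).} Your proposed splitting of $A \hookrightarrow H$ ``by composing with the $A$-linear map built from a left integral of $\overline{H}$'' does not work in general. That trace-type construction gives a retraction onto $A$ only when the integral $\textstyle\int \in \overline{H}$ satisfies $\epsilon(\int) \neq 0$, i.e.\ when $\overline{H}$ is semisimple. But $\overline{H}$ need not be semisimple here --- think of $H$ a quantised enveloping algebra at a root of unity, or an infinite Taft algebra, where $\overline{H}$ is a non-semisimple small quantum group or Taft algebra. The paper's argument is different and does not require any hypothesis on $\overline{H}$: one observes that $H/A$ lies in the relative Hopf module category ${}_A\mathcal{M}^H$, and by Masuoka--Wigner \cite[Corollary 2.9]{MW} every object of this category is $A$-projective once $H$ is faithfully flat over $A$. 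Hence the short exact sequence $0 \to A \to H \to H/A \to 0$ of left $A$-modules splits. This is the step you are missing.

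\textbf{The route to (7)(a).} You attribute faithful flatness in the semiprime/characteristic-zero cases to Skryabin \cite{Sk04}, but that is not how the paper proceeds, and I do not believe \cite{Sk04} contains this statement. The paper's argument for (ii) is: $A$ semiprime $\Rightarrow$ $A$ has finite global dimension (Waterhouse), hence by Wu--Zhang \cite[Theorem 0.3]{WuZh} the module ${}_AH$ is projective; then Masuoka--Wigner \cite[Corollary 2.9]{MW} upgrades flatness to faithful flatness for extensions of Hopf algebras with bijective antipode. Case (i) reduces to (ii) since affine commutative Hopf algebras in characteristic $0$ are reduced. Case (iii) (pointed) is Takeuchi \cite{Tak}, and case (iv) (central) is Schneider \cite{Schn93}. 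The Galois property then follows from faithful flatness plus normality via \cite[Proposition 3.4.3]{Mont}. Your identification of (7)(a) as the crux is correct, but the actual input is Wu--Zhang's homological result, not Skryabin's invariant-theoretic one.
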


\begin{proof}
$(1)$ (c) $\Leftrightarrow$ (d): Molnar's theorem \cite{Mol} ensures that (d) implies (c), and the converse is Hilbert's Basis Theorem.

(d) $\Leftrightarrow$ (a): That (d) implies (a) is clear. The converse follows from \cite{FJ}.

(b) $\Leftrightarrow$ (c): That (c) $\Rightarrow$ (b) is trivial. The converse follows from a generalized version of the Artin-Tate lemma attributed to Small, which states that, if $A\subseteq H$ is any extension of $k$-algebras where $H$ is affine and is a finitely-generated left module over a commutative subalgebra $A$, then $A$ is affine. A proof can be found at \cite[Lemma 1.3]{Sarr}.
\medskip

\noindent $(2)$ Normality of $A$ ensures that $A^+H$ is a Hopf ideal by \cite[Lemma 3.4.2(1)]{Mont}. 
\medskip

\noindent $(3)$ This is clear.
\medskip

\noindent $(4)$ Every $k$-algebra which is a finite module over a commutative subalgebra satisfies a PI by \cite[Corollary 13.1.13(iii)]{McRob}.
\medskip

\noindent $(5)$ Every affine noetherian PI Hopf algebra has bijective antipode by \cite[Corollary 2]{Sk06}. 
\medskip

\noindent $(6)$ The fact that the Gelfand-Kirillov and Krull dimensions of $H$ and $A$ are finite and coincide follows from \cite[Proposition 5.5 and Corollary 10.16]{KL}.
\medskip

\noindent $(7)$ (a) Hypothesis (i) is a particular case of (ii) by \cite[Corollary 9.2.11]{Mont}. Assume (ii). Then $A$ has finite global dimension by \cite[11.6, 11.7]{Water}. Then, by \cite[Theorem 0.3]{WuZh}, together with (6), $H$ is a projective left and right $A$-module. A flat extension of Hopf algebras with bijective antipodes is faithfully flat, \cite[Corollary 2.9]{MW}. Together with (5), this proves right and left faithful flatness. If $H$ is pointed (iii) or $A$ is central in $H$ (iv), then faithful flatness follows from \cite[Theorem 3.2]{Tak} and \cite[Theorem 3.3]{Schn93} respectively. The $\overline{H}$-Galois property follows from faithful flatness, as is shown in the proof of \cite[Proposition 3.4.3]{Mont}.

\noindent (b) is immediate from (a) and \cite[Proposition 3.4.3]{Mont}.

\noindent (c) follows from (a) by \cite[Corollary 2.9]{MW}.

\noindent (d) The left $A$-module $H/A$ is in the category ${_A M}^{H}$, in the notation of \cite{MW}. Hence $H/A$ is left $A$-projective by (c) and \cite[Corollary 2.9]{MW}, so the exact sequence 
$$ 0 \longrightarrow A \longrightarrow H \longrightarrow H/A \longrightarrow 0 $$
of left $A$-modules splits, as required. The argument on the right is identical.
\end{proof}

\begin{Rmks}\label{propremarks} Keep the notation and hypotheses of Theorem \ref{properties}.

\noindent$(1)$ Parts (1), (4), (5) and (6) of the theorem are valid (with the same proofs) without the hypothesis that $A$ is normal in $H$.

\medskip

\noindent$(2)$ (Radford \cite{Rad80}) In general, $H$ is \emph{not} a free $A$-module. For example, let $H=\OO(SL_2(k))$. This commutative Hopf algebra is a finite module over the Hopf subalgebra $A$ generated by the monomials of even degree, but it is not a free $A$-module.

\medskip

\noindent$(3)$ Notwithstanding $(2)$, $H$ is $A$-free when $H$ is pointed \cite{Rad77} or when $A$ contains the coradical of $H$ \cite[Corollary 2.3]{Rad77b}.

\medskip

\noindent$(4)$ A further important setting where $H$ is $A$-free is when $H$ decomposes as a crossed product $H = A \#_{\sigma} \overline{H}$. By a celebrated result of Doi and Takeuchi \cite{DT}, this happens if and only if there is a cleaving map $\gamma:\overline{H} \longrightarrow H$ - that is, $\gamma$ is a convolution invertible right $\overline{H}$-comodule map. Moreover, when the extension $A\subset H$ is $\overline{H}$-Galois, such a cleaving map exists if and only if $A \subset H$ has the \emph{normal basis property}.  For details, see for example \cite[Propositions 7.2.3, 7.2.7, Theorem 8.2.4]{Mont}.

Such a crossed product decomposition of $H$ is guaranteed when $H$ is pointed or when its coradical is contained in $AG(H)$ by \cite[Corollary 4.3]{Schn92}. In particular, in these cases all conclusions of Theorem \ref{properties}(7) hold.

\end{Rmks}

\subsection{Finiteness over the centre}\label{centre} The following remains at present a very natural open question. 

\begin{Qtn}\label{ftecent}  \cite[Question E]{Br98}, \cite[Question C(i)]{Br07} Is every affine or noetherian Hopf algebra satisfying a polynomial identity a finite module over its centre?
\end{Qtn}

However, for the special case of affine commutative-by-finite Hopf algebras, the answer to Question \ref{ftecent} is ``yes''. This follows easily from an important result of Skryabin \cite[Proposition 2.7]{Sk04} on integrality over invariants, as we now show.

\begin{Def}\label{invariant} Let $T$ be a Hopf algebra and $R$ a left $T$-module algebra. The subalgebra of $T$-\emph{invariants} of $R$ is
$$ R^T \; = \; \{ r \in R : t\cdot r = \epsilon(t)r, \, \forall t \in T \}. $$
\end{Def}

We give some details here on the proof of this result by Skryabin, since this statement is not explicitly enunciated in \cite{Sk04}.
\begin{Thm}\cite[Proposition 2.7]{Sk04}\label{finmodinv}
Let $T$ be a finite-dimensional Hopf algebra and $A$ an affine commutative left $T$-module algebra. Then, $A$ is a finitely generated module over $A^T$.
\end{Thm}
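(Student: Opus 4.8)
The plan is to deduce this from Skryabin's integrality result \cite[Proposition 2.7]{Sk04}, which asserts that for a finite-dimensional Hopf algebra $T$ acting on a commutative $T$-module algebra $A$, every element of $A$ is integral over $A^T$ of degree at most $\dim_k T$. Granting that, the argument is essentially the classical Artin--Tate / Noether-finiteness machinery. First I would fix a finite generating set $a_1,\dots,a_n$ for $A$ as a $k$-algebra, and let $B$ be the $k$-subalgebra of $A^T$ generated by the (finitely many) coefficients of the monic integrality equations satisfied by each $a_i$ over $A^T$, as supplied by Skryabin's proposition. Then each $a_i$ is integral over $B$, so $A = k[a_1,\dots,a_n]$ is a finitely generated $B$-module; since $B$ is affine commutative, hence noetherian, the submodule $A^T \subseteq A$ is also finitely generated as a $B$-module, and a fortiori as a $B$-algebra. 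Thus $A^T$ is affine, and $A$ is a finite module over the affine (noetherian) algebra $A^T$.

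The one genuine gap is supplying the integrality statement itself, since the excerpt says it is not explicitly stated in \cite{Sk04}. Here I would sketch Skryabin's idea: because $T$ is finite dimensional, it has a (two-sided) nonzero integral $\Lambda \in T$, and the "trace-like" map $a \mapsto \Lambda\cdot a$ lands in a finite-dimensional subspace related to $A^T$; more usefully, for $a\in A$ one considers the polynomial $\prod (x - t_{(i)}\cdot a)$ over a suitable orbit, or equivalently works inside the finite extension $A \subseteq A \# T$ and uses that $a$ satisfies its "characteristic polynomial" relative to the left regular action of $T$, whose coefficients are symmetric functions killed by $T$ and hence lie in $A^T$. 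The key point making the degree bound $\dim_k T$ is that the relevant action factors through an algebra of dimension at most $\dim_k T$. I would present this as a short lemma, citing \cite{Sk04} for the precise formulation and only indicating why the coefficients are $T$-invariant.

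The main obstacle, then, is not the reduction — that is routine commutative algebra — but rather writing down a correct and self-contained justification of the integrality-over-invariants statement with an explicit degree bound, since the cleanest proofs use either the theory of integrals in finite-dimensional Hopf algebras or a descent argument through $A\# T$, and one must be careful that commutativity of $A$ (not of $T$) is exactly what is used to make the "characteristic polynomial" coefficients genuinely invariant. Once that lemma is in place, the finiteness of $A$ over $A^T$ and the affineness of $A^T$ follow in a few lines as above.
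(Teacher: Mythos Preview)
Your overall architecture is right: Skryabin's integrality result plus the Artin--Tate/Noether argument gives finiteness, and your expansion of the latter is correct (indeed more detailed than the paper's one-line ``Since $A$ is affine, it is a finitely generated $A^T$-module'').

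However, you have misidentified where the work lies. Skryabin's \cite[Proposition 2.7]{Sk04} is \emph{not} an unconditional integrality statement for arbitrary finite-dimensional $T$ acting on commutative $A$: it has two parts with separate hypotheses, (a) requiring $A$ to be $T$-reduced (via \cite[Theorem 2.5]{Sk04}), and (b) requiring $A$ to be $\mathbb{Z}$-torsion. The paper's actual contribution is to observe that over an algebraically closed field one of these always holds: if $\charr k > 0$ then $A$ is $\mathbb{Z}$-torsion and (b) applies; if $\charr k = 0$ then $A$ is semiprime by \cite[Corollary 9.2.11]{Mont}, hence $T$-reduced, and (a) applies. That characteristic dichotomy is the point, and it is absent from your proposal.

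Your proposed alternative---proving integrality from scratch via integrals and a ``characteristic polynomial'' in $A\# T$---is not a viable substitute as sketched. For general finite-dimensional $T$ the integral $\Lambda$ need not be normalizable (i.e.\ $T$ need not be semisimple), so the trace-type arguments you allude to do not go through directly; this is precisely why Skryabin's proposition carries hypotheses. The symmetric-function/orbit idea you mention works cleanly when $T$ is a group algebra, but extending it to arbitrary $T$ is nontrivial and is essentially the content of Skryabin's paper. So rather than trying to reprove integrality, you should cite \cite[Proposition 2.7(a),(b)]{Sk04} and supply the two-line verification of its hypotheses in each characteristic.
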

\begin{proof}
On one hand, if $\char k > 0$, then clearly $A$ is $\mathbb{Z}$-torsion, so \cite[Proposition 2.7(b)]{Sk04} applies to show that $A$ is integral over $A^T$. If on the other hand $\char k = 0$, then $A$ is semiprime by \cite[Corollary 9.2.11]{Mont}, so that, in the terminology of \cite{Sk04}, $A$ is $T$-reduced, and again \cite[Theorem 2.5, Proposition 2.7(a)]{Sk04} give $A$ integral over $A^T$. Since $A$ is affine, it is a finitely generated $A^T$-module.
\end{proof} 

\begin{Cor}\label{centrethm}
Let $H$ be an affine commutative-by-finite Hopf $k$-algebra, finite over the normal commutative Hopf subalgebra $A$. Then, $H$ is a finitely-generated module over its centre $Z(H)$, which is affine.
\end{Cor}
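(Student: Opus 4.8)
The plan is to combine Theorem~\ref{finmodinv} applied to the finite-dimensional Hopf algebra $\overline{H}$ acting (adjointly) on $A$, together with the commutative-by-finite hypothesis, to squeeze $H$ between two rings over which it is finite. Concretely, write $T = \overline{H}$ acting on $A$ via the (left, say) adjoint action, which factors through $\overline{H}$ by Theorem~\ref{properties}(3). Then $A^{T}$ is, by definition of the adjoint action, exactly the set of $a \in A$ with $\sum h_1 a S(h_2) = \epsilon(h) a$ for all $h \in H$, i.e.\ the elements of $A$ that are central in $H$; thus $A^{T} = A \cap Z(H) =: Z_0$. By Theorem~\ref{finmodinv}, $A$ is a finitely generated module over $Z_0$, and since $A$ is affine, $Z_0$ is affine by the Artin--Tate/Small argument already invoked in the proof of Theorem~\ref{properties}(1). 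Finally $H$ is a finite module over $A$ by hypothesis, hence a finite module over $Z_0 \subseteq Z(H)$, and a fortiori a finitely generated module over $Z(H)$.

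The remaining point is that $Z(H)$ itself is affine. Here I would argue that $Z(H)$ is a subalgebra of the affine algebra $H$ which is a module over the affine noetherian subalgebra $Z_0$; since $H$ is a finite (hence noetherian) $Z_0$-module and $Z(H)$ is a $Z_0$-submodule, $Z(H)$ is a finitely generated $Z_0$-module, and therefore an affine $k$-algebra. So the chain is $Z_0 \subseteq Z(H) \subseteq H$ with $H$ finite over $Z_0$, giving both that $H$ is finite over $Z(H)$ and that $Z(H)$ is affine. One should note that $H$ is noetherian here by Theorem~\ref{properties}(1), since it is affine, which is what licenses the submodule-of-a-finite-module step.

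The main (and essentially only) obstacle is the input Theorem~\ref{finmodinv}: one needs the integrality of the affine commutative module algebra $A$ over its ring of $\overline{H}$-invariants, which is precisely Skryabin's result \cite[Proposition 2.7]{Sk04}, handled in the two cases $\charr k > 0$ (where $A$ is $\mathbb{Z}$-torsion) and $\charr k = 0$ (where $A$ is semiprime, hence $\overline{H}$-reduced). Once this is granted, the identification $A^{\overline{H}} = A \cap Z(H)$ and the two applications of the Artin--Tate lemma are routine. A small care point is to make sure the adjoint action used is the one for which invariants are genuinely central in all of $H$ (not merely centralising $A$): this is immediate from the formula $ad_l(h)(a) = \sum h_1 a S(h_2)$, since $\sum h_1 a S(h_2) = \epsilon(h)a$ for all $h$ forces $ha = ah$ for all $h \in H$ by a standard antipode manipulation ($ha = \sum h_1 a \epsilon(h_2) = \sum h_1 a S(h_2) h_3 = \sum \epsilon(h_1) a h_2 = ah$).
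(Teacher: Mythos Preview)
Your proof is correct and follows essentially the same route as the paper: identify $A^{\overline{H}} = A \cap Z(H)$ via the antipode computation you give, apply Theorem~\ref{finmodinv} to get $A$ finite over $A^{\overline{H}}$, and conclude $H$ is finite over $Z(H)$ with $Z(H)$ affine. The only cosmetic difference is that the paper deduces affineness of $Z(H)$ by a direct appeal to the Artin--Tate lemma, whereas you argue via $Z(H)$ being a submodule of the noetherian $Z_0$-module $H$; both are fine.
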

\begin{proof}
Note first that $Z(H) \cap A = A^{\overline{H}}$. For it is clear that $Z(H) \cap A \subseteq A^{\overline{H}}$. Conversely, if $a \in A^{\overline{H}}$ and $h \in H$ then
$$ ha = \sum h_1 a \epsilon(h_2) = \sum h_1 a S(h_2)h_3 = \sum \epsilon (h_1)a h_2 = ah.$$
By Theorem \ref{finmodinv}, $A$ is a finite-module over $A^{\overline{H}}$. Since $H$ is a finite $A$-module, it is finite $Z(H)$-module. Lastly, $Z(H)$ is affine by the Artin-Tate lemma, \cite[Lemma 13.9.10]{McRob}.
\end{proof}

\subsection{Homological properties and consequences}\label{homprops} The class of affine commutative-by-finite Hopf $k$-algebras includes as subclasses the affine commutative Hopf $k$-algebras and the finite dimensional Hopf $k$-algebras. Both these classes exhibit important homological properties - affine commutative Hopf algebras are Gorenstein, \cite[2.3, Step 1]{Br98}, and in characteristic 0 they have finite global dimension - that is, they are \emph{regular} \cite[11.4, 11.6, 11.7]{Water}; and finite dimensional Hopf algebras are Frobenius \cite[2.1.3]{Mont}. We review here how these features partially extend to the commutative-by-finite setting.

The definitions, for a Hopf $k$-algebra $H$, of the \emph{Auslander Gorenstein, Auslander regular, AS-Gorenstein, AS-regular and GK-Cohen Macaulay} properties can be found, for example, in \cite[3.1, 3.6]{WuZh}. The first and third [resp. second and fourth] of these properties are strengthened versions of the requirement to have finite injective [resp. finite global] dimension.

It's immediate from the definitions that a Hopf algebra which is both AS-Gorenstein and GK-Cohen Macaulay must have its injective dimension equal to its GK-dimension. By results of Wu and Zhang, \cite[Proposition 3.7]{WuZh} and \cite[Lemma 6.1, \textsection 6.2]{BrZh1}, every affine noetherian Hopf algebra satisfying a polynomial identity is AS-Gorenstein, Auslander Gorenstein and GK-Cohen Macaulay. This immediately yields most of the following result. Part (\ref{injhom}) gives an alternative way (for algebras which are finite over their centres) of encoding a stringent noncommutative generalisation of the commutative Gorenstein condition - for the definition of \emph{injective homogeneity}, see \cite{BrH}, \cite{BrM}.

\begin{Thm}\label{injthm} Let $H$ be an affine commutative-by-finite Hopf algebra, finite over the commutative normal Hopf subalgebra $A$. Let $\GKdim(H) = d$. 
\begin{enumerate}
\item $H$ is AS-Gorenstein and Auslander Gorenstein, of injective dimension $d$.
\item $H$ is GK-Cohen Macaulay. 
\item $H$ is left and right GK-pure; that is, every non-zero left or right ideal of $H$ has GK-dimension $d$.
\item\label{injhom} $H$ is injectively homogeneous. As a consequence, $H$ is a Cohen-Macaulay $Z(H)$-module.
\item $H$, $Z(H)$ and $A^H$ each have artinian classical rings of fractions, $Q(H)$, $Q(Z(H))$ and $Q(A^H)$.
\item The regular elements $\mathcal{Z}$ of $Z(H)$ and $\mathcal{A}$ of $A^H$ are also non-zero divisors in $H$, and 
\begin{equation}\label{trio}Q(H) = H[\mathcal{Z}]^{-1} = H[\mathcal{A}]^{-1}.\end{equation}
\item $Q(H)$ is quasi-Frobenius.
\end{enumerate}

\end{Thm}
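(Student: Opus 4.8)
The strategy is to obtain parts (1)–(7) in sequence, with each part feeding the next, relying on the cited results of Wu–Zhang and Brown–Zhang for the heavy homological input and on general noncommutative ring theory (as in \cite{BrGoodbook}, \cite{McRob}) for the consequences about rings of fractions.

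First I would dispatch (1) and (2) by quoting the results already advertised in the paragraph preceding the theorem: by Theorem \ref{properties}, $H$ is an affine noetherian PI Hopf algebra, so \cite[Proposition 3.7]{WuZh} together with \cite[Lemma 6.1, \textsection 6.2]{BrZh1} gives that $H$ is AS-Gorenstein, Auslander Gorenstein and GK-Cohen Macaulay; the injective dimension equals $\GKdim(H)=d$ because the AS-Gorenstein and GK-Cohen Macaulay conditions force the injective dimension to coincide with the GK-dimension, as noted above. Part (3), GK-purity, is then a formal consequence of GK-Cohen Macaulayness for a GK-homogeneous algebra: any nonzero left ideal $I$ of $H$ sits in an exact sequence $0\to I\to H\to H/I\to 0$, and if $\GKdim(I)<d$ then the Cohen-Macaulay/Auslander condition would force a grade inequality incompatible with $H$ being a domain-like GK-homogeneous algebra; here one cites the standard purity statement \cite[Corollary 3.9]{WuZh} or the analogous lemma in \cite{BrZh1}. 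For (4), injective homogeneity: $H$ is module-finite over the affine commutative $Z(H)$ by Corollary \ref{centrethm}, and an Auslander-Gorenstein, GK-Cohen-Macaulay, GK-homogeneous algebra that is finite over its affine centre is injectively homogeneous by \cite{BrH}, \cite{BrM}; the Cohen-Macaulay-module conclusion over $Z(H)$ is part of that package.

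For (5)–(7) I would argue as follows. Since $H$ is noetherian and, being PI, satisfies the hypotheses of Goldie's theorem, $H$ has a classical quotient ring $Q(H)$; it is artinian because $H$ is a prime-free (i.e. has finitely many minimal primes) noetherian PI ring—more precisely $Q(H)$ is a finite direct product of matrix rings over division rings, hence artinian. The same applies to the commutative noetherian rings $Z(H)$ and $A^H$ (the latter affine and noetherian by Corollary \ref{centrethm} and Theorem \ref{finmodinv}), giving $Q(Z(H))$ and $Q(A^H)$ artinian. For (6): the regular elements $\mathcal{Z}$ of $Z(H)$ and $\mathcal{A}$ of $A^H$ are central in $H$; a central element of $H$ that is regular in $Z(H)$ is regular in $H$ because $H$ is a finite $Z(H)$-module and hence $Z(H)\hookrightarrow H$ is an integral, torsion-free-on-a-faithful-module extension—one checks a zero-divisor in $H$ would produce one in $Z(H)$ via the usual trace/norm or rank argument over the semiprime quotient; similarly for $\mathcal{A}$ using that $H$ is finite over $A^H$ (composite of $H$ finite over $A$ and $A$ finite over $A^{\overline H}=A^H\cap$... wait, here one uses $A$ finite over $A^{\overline H}$ from Theorem \ref{finmodinv} and $Z(H)\cap A=A^{\overline H}$). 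Then $H[\mathcal Z]^{-1}$ and $H[\mathcal A]^{-1}$ are module-finite over the artinian rings $Q(Z(H))$ and $Q(A^H)$ respectively, hence artinian, and each contains $H$ with all its regular elements inverted, so by maximality/uniqueness of $Q(H)$ they both equal $Q(H)$, proving \eqref{trio}. Finally (7): $Q(H)$ is artinian (by (5)) and, being the quotient ring of the Auslander-Gorenstein, injectively homogeneous algebra $H$, is self-injective—an artinian self-injective ring is exactly a quasi-Frobenius ring; alternatively, $Q(H)$ is finite over the artinian $Q(Z(H))$ and inherits injectivity, so quasi-Frobenius.

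The main obstacle I anticipate is part (6): carefully justifying that regular elements of the (possibly much smaller) subalgebras $Z(H)$ and $A^H$ remain regular in $H$, and that localising at them already inverts \emph{all} regular elements of $H$. The cleanest route is to pass to $Q(H)$ first, observe $Q(H)$ is finite-dimensional over $Q(Z(H))$ and over $Q(A^H)$ because finiteness of modules is preserved under central localisation, and then deduce that every regular element of $H$ becomes a unit once the central regular elements are inverted; combined with the fact that central localisation of $H$ at $\mathcal Z$ or $\mathcal A$ lands inside $Q(H)$ and is already a (classical) ring of quotients, this forces equality. Everything else is either a direct citation or a standard application of Goldie/Artin–Tate-type results.
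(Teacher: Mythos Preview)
Your treatment of (1)--(4) is correct and essentially identical to the paper's: both quote the Wu--Zhang results for (1),(2), deduce GK-purity formally for (3), and invoke the Brown--Macleod/Brown--Hajarnavis machinery for (4) after noting $H$ is finite over $Z(H)$ via Corollary \ref{centrethm}. Likewise (7) is fine: the paper simply cites \cite[Theorem 0.2(2)]{WuZh}, which encapsulates your ``artinian self-injective'' reasoning.

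There is, however, a genuine gap in your handling of (5). You appeal to Goldie's theorem and assert that $Q(H)$ is a finite direct product of matrix rings over division rings. But Goldie's theorem in that form requires $H$ to be \emph{semiprime}, and nothing in the hypotheses guarantees this---indeed in positive characteristic $A$, and hence $H$, may well have nonzero nilradical (see $\S$\ref{prime}). The correct route, taken in the paper, is to invoke Small's theorem: GK-pure noetherian rings have artinian classical quotient rings \cite[6.8.16]{McRob}, so (3) gives (5) for $H$ directly. For $Z(H)$ and $A^{\overline{H}}$ one cannot just say ``commutative noetherian, hence artinian quotient ring''---that is false without reducedness---so the paper separately proves GK-purity of $Z(H)$ (and analogously of $A^{\overline{H}}$) by lifting to $H$: for $0\neq I\lhd Z(H)$ one shows $\GKdim_{Z(H)}(I)=\GKdim_{Z(H)}(HI)=\GKdim_H(HI)=d$ using \cite[Corollary 5.4, Proposition 5.1]{KL} and the purity of $H$.

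This same gap contaminates your approach to (6). Your proposed ``trace/norm'' argument that regular-in-$Z(H)$ implies regular-in-$H$ does not work in the non-semiprime setting, and you rightly flag it as the sticking point. The paper's argument avoids the issue entirely and again leans on GK-purity: if $z\in\mathcal{Z}$ and $zh=0$ in $H$, then $\GKdim_{Z(H)}(Z(H)h)\leq\GKdim(Z(H)/zZ(H))<d$ by \cite[Proposition 3.15]{KL}, whence $\GKdim_H(Hh)<d$, forcing $h=0$ by (3). Once regularity is established, your final step---that $H[\mathcal{Z}]^{-1}$ and $H[\mathcal{A}]^{-1}$ are artinian (being finite over $Q(Z(H))$, $Q(A^{\overline{H}})$) and hence coincide with $Q(H)$---is exactly what the paper does.
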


\begin{proof} Parts (1) and (2) are discussed above, and (3) is an immediate consequence of (2), since a non-zero left or right ideal of $H$ has homological grade 0, by definition of the latter.

By Corollary \ref{centrethm} $H$ is finite over $Z(H)$. That (4) is a consequence of (1) and (2) for algebras which are finite over their centres is shown in \cite[Theorems 5.3 and 4.8]{BrM}.

For (5), it is a standard consequence of Small's theorem \cite[4.1.4]{McRob} that GK-pure noetherian algebras have artinian rings of fractions \cite[6.8.16]{McRob}. The claim that $Q(H)$ exists and is artinian is an immediate consequence of this and (3). The arguments for $Z(H)$ and for $A^{\overline{H}}$ are identical; we deal here with $Z(H)$. As with the proof for $H$, it is enough to prove that $Z(H)$ is GK-pure. By Corollary \ref{centrethm} $H$ is a finitely generated $Z(H)$-module. Let $0 \neq I \vartriangleleft Z(H)$, so $\GKdim_H (HI) = d$ by (3). On the other hand
\begin{equation} \label{hip} \GKdim_{Z(H)}(HI) = \GKdim_H(HI),
\end{equation}
by \cite[Corollary 5.4]{KL}, and 
\begin{equation} \label{hop} \GKdim_{Z(H)}(HI) = \GKdim_{Z(H)} (I),
\end{equation}
since $I \subseteq HI$ and $HI$ is a homomorphic image of a finite direct sum of copies of $I$ as $Z(H)$-module, so that \cite[Proposition 5.1(a),(b)]{KL} applies. Therefore $\GKdim_{Z(H)}(I) = d$ as required.

(6) Let $z \in \mathcal{Z}$. Then $\GKdim(Z(H)/Z(H)z) < d$ by \cite[Proposition 3.15]{KL}. Therefore, if $zh = 0$ for some $h \in H$, the $Z(H)$-module $Z(H)h$ has GK-dimension strictly less than $d$, by \cite[Proposition 5.1(c)]{KL}. As in the proof of (5),  $\GKdim_H(Hh) < d$, so $h =0$ by (3). The same argument works for elements of $\mathcal{A}$. The last two partial quotient rings in (\ref{trio}) therefore exist; since they are clearly artinian, they must equal $Q(H)$.

(7) That $Q(H)$ is quasi-Frobenius is proved in \cite[Theorem 0.2(2)]{WuZh}.
\end{proof}

\subsection{Smooth commutative-by-finite Hopf algebras}\label{smooth}
An affine commutative Hopf algebra has finite global dimension if and only if it has no non-zero nilpotent elements \cite[11.6, 11.7]{Water}; while a finite dimensional Hopf algebra has finite global dimension if and only if it is semisimple, if and only if $\epsilon (\int) \neq 0$, for a left or right integral $\int$, \cite[2.2.1]{Mont}. It's thus natural to look for an easily checked necessary and sufficient criterion for an affine commutative-by-finite Hopf algebra to have finite global dimension. Examples suggest there may be no such simple condition, but sufficient conditions for smoothness are not hard to obtain, as follows.  The hypothesis on $H$ as $A$-module in (3) is presumably redundant - for situations where it is known to be satisfied, see Theorem \ref{properties}(7).

\begin{Prop}\label{gldimcrit} Let $H$ be an affine commutative-by-finite Hopf $k$-algebra, with normal commutative Hopf subalgebra $A$ such that $H$ is a finite $A$-module.
\begin{enumerate}
\item If $A$ has no nonzero nilpotent elements and $\overline{H}$ is semisimple, then $H$ has finite global dimension.
\item If $k$ has characteristic 0 and $\overline{H}$ is semisimple, then $H$ has finite global dimension.
\item If $H$ has finite global dimension and $H$ is $A$-flat then $A$ has finite global dimension, so $A$ has no nonzero nilpotent elements.
\end{enumerate}
\end{Prop}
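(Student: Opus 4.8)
The plan is to reduce each part to known facts about (a) smoothness of commutative Hopf algebras, (b) semisimplicity of finite-dimensional Hopf algebras, and (c) behaviour of global dimension under the faithfully flat Hopf--Galois extension $A \subseteq H$.

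For part (1), I would argue as follows. Since $A$ has no nonzero nilpotent elements, it is a regular affine commutative Hopf algebra, hence of finite global dimension, by the cited results of Waterhouse \cite[11.6, 11.7]{Water}; write $\gldim(A) = d = \GKdim(H)$ using Theorem~\ref{properties}(6). Since $\overline{H}$ is semisimple, by \cite[2.2.1]{Mont} it has global dimension $0$. Now I want to conclude $\gldim(H) < \infty$ from a ``global dimension adds up'' statement along the Hopf--Galois extension $A \subseteq H$ with quotient $\overline{H}$. The cleanest route: because $\overline{H}$ is semisimple, $H$ is a separable (in particular projective) extension of $A$ in a suitable sense, so every $H$-module that is $A$-projective is $H$-projective, and a projective resolution of an $H$-module $M$ as an $A$-module of length $\le d$ can be promoted. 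Concretely, $H \otimes_A M \to M$ splits as $H$-modules when $\overline{H}$ is semisimple (this is the module-theoretic shadow of semisimplicity of the fibre), so $M$ is a direct summand of $H \otimes_A M$; taking an $A$-projective resolution $P_\bullet \to M$ of length $\le d$ and applying the exact functor $H \otimes_A -$ (exact since $H$ is $A$-flat — here one invokes Theorem~\ref{properties}(7), which needs the flatness hypothesis, or the standing assumption in the statement) yields an $H$-projective resolution of $H \otimes_A M$ of length $\le d$, and summands of modules of finite projective dimension have finite projective dimension. Hence $\prdim_H(M) \le d$ for all $M$, so $\gldim(H) \le d < \infty$.

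Part (2) is then immediate: in characteristic $0$, Theorem~\ref{properties}(7)(i),(ii) tells us $A$ is semiprime, i.e.\ reduced (commutative semiprime = no nonzero nilpotents), and moreover $H$ is $A$-flat (indeed faithfully flat) by Theorem~\ref{properties}(7)(a),(c); so the hypotheses of part (1) — including the flatness needed to run the argument — are all met, and we conclude $\gldim(H) < \infty$.

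For part (3), suppose $\gldim(H) < \infty$ and $H$ is $A$-flat. Since $A \subseteq H$ is a subalgebra and $H$ is faithfully flat over $A$ — flatness is assumed, and faithful flatness for a Hopf subalgebra inclusion follows from \cite[Corollary 2.9]{MW} together with bijectivity of the antipode, Theorem~\ref{properties}(5), or alternatively one uses that $A$ is a direct summand of $H$ as an $A$-module via Theorem~\ref{properties}(7)(d) under the relevant hypotheses — faithfully flat descent of finite global dimension gives $\gldim(A) \le \gldim(H) < \infty$. (The standard statement: if $A \to H$ is faithfully flat then $\gldim A \le \gldim H$, since $\Ext^n_A(M,N) \hookrightarrow$ or injects after base change into $\Ext^n_H$-type groups, or more elementarily, a finite $H$-projective resolution of $H \otimes_A M$ restricts, using faithful flatness, to bound $\prdim_A M$.) Then, $A$ being a commutative affine Hopf algebra of finite global dimension, \cite[11.6, 11.7]{Water} forces $A$ to have no nonzero nilpotent elements.

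**The main obstacle** I expect is making precise the ``global dimension is bounded along the extension'' step in part (1) without circular appeals: the slick argument wants the splitting of $H \otimes_A M \to M$, which is exactly the normal-basis / separability input that semisimplicity of $\overline{H}$ provides in a faithfully flat $\overline{H}$-Galois extension, and one must be careful whether that splitting is available from the hypotheses as stated (the statement only assumes $H$ is a finite $A$-module, not flat) — this is presumably why the authors flag the flatness hypothesis in (3) as ``presumably redundant'' and why (1) may secretly need it too, or may instead be proved via a direct Ext-spectral-sequence / relative homological algebra argument for the extension $A \subseteq H$ that sidesteps flatness using only that $H$ is $A$-projective on one side. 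I would first try the splitting argument assuming whatever flatness/projectivity is genuinely needed, and only if pressed fall back to the relative $\Ext$ long exact sequence comparing $A$- and $H$-projective dimensions through $\overline{H}$.
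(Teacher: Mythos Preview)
Your approach works at the level of intuition but the splitting of $H\otimes_A M \to M$ for an \emph{arbitrary} $H$-module $M$ is exactly the Maschke-type statement you flag as ``the main obstacle,'' and you never discharge it. The paper bypasses this entirely by invoking the Hopf-algebra-specific identity
\[
\gldim(H) \;=\; \prdim_H(k),
\]
valid for any Hopf algebra $H$ with bijective antipode (this is \cite[\S2.4]{LorLor}). With this in hand, part (1) only requires bounding the projective dimension of the single module $k$: take a finite projective $A$-resolution of $k$ (available since $A$ reduced affine commutative Hopf implies $\gldim(A)<\infty$), apply the exact functor $H\otimes_A -$ to get a finite projective $H$-resolution of $H\otimes_A k \cong \overline{H}$, and then observe that $k$ is a direct summand of $\overline{H}$ as an $H$-module simply because $\overline{H}$ is semisimple and $k$ is one of its simple modules. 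No general separability of the extension is needed.

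The same identity streamlines part (3): a finite projective $H$-resolution of $k$ restricts to a finite projective $A$-resolution of $k$ because $_A H$ is projective (flat plus finitely generated over the noetherian ring $A$), and then $\gldim(A)=\prdim_A(k)<\infty$ by the same Lorenz--Lorenz fact applied to $A$. Your ``faithfully flat descent of global dimension'' is not a standard theorem in this generality, and the sketch you give (embedding $\Ext_A$ into $\Ext_H$) is not quite right as stated.

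So the missing idea is the reduction to the trivial module via $\gldim = \prdim(k)$; once you have it, both parts become two-line arguments and the obstacle you identified evaporates.
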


\begin{proof} $(1)$ Since $A$ is semiprime and affine, by Theorem \ref{properties}(1), it has finite global dimension by \cite[11.6, 11.7]{Water}. By Theorem \ref{properties}(7) $H_A$ is faithfully flat, so the finite resolution by projective $A$-modules of the trivial $A$-module $k$ induces a finite projective resolution by $H$-modules of $H\otimes_A k\cong \overline{H}$. The trivial $H$-module is a direct summand of $\overline{H}$ by semisimplicity of $\overline{H}$, hence  $\prdim_H(k) \leq \prdim_A(k) < \infty$. But $\gldim(H) = \prdim_H (k)$ by \cite[Section 2.4]{LorLor},  so the result follows.

(2) This follows from (1), since $A$ is always reduced in characteristic 0 \cite[11.4]{Water}.

(3) If $\gldim(H) < \infty$, then the finite projective resolution of the trivial $H$-module is also a projective $A$-resolution, as ${_A H}$ is projective by Theorem \ref{properties}(7). So $\gldim (A) < \infty$ by \cite[Section 2.4]{LorLor}. 
\end{proof}

\begin{Rmks}\label{smoothcond} (1) As recalled before the proposition, a semiprime affine commutative Hopf algebra $H$ has finite global dimension. But this fails abysmally to generalise to the commutative-by-finite setting. For example,  the algebras $B = B(n,p_0,\ldots,p_s,q)$ constructed by Goodearl and Zhang in \cite{GoodZh} and discussed in $\S$\ref{GKdim2} below are affine commutative-by-finite Hopf algebras and are domains of GK-dimension $2$, but have infinite global dimension. 

(2) The converses of Propositions \ref{gldimcrit}(1) and (2) are false even when $A$ is central or $H$ is cocommutative. For the case when $A$ is central one can take the quantised enveloping algebra $U_{\epsilon}(\mathfrak{g})$ of any simple Lie algebra $\mathfrak{g}$ at a root of unity $\epsilon$, see $\S$\ref{quant}.

As for the cocommutative case, consider the torsion-free polycyclic group
$$ G := \langle x,y : x^{-1}y^2 x = y^{-2}, \, y^{-1}x^2 y = x^{-2}\rangle $$
discussed in \cite[Lemma 13.3.3]{Pa}. As explained by Passman, $G$ has a normal subgroup $N$ which is free abelian of rank 3, with $G/N$ a Klein 4-group. Moreover, $(\ast)$ $G$ does \emph{not} have any normal abelian subgroup $W$ of finite index with $|G:W|$ prime to 2. Take a field $k$ of characteristic 2 and let $H = kG$. Then $H$ is an affine commutative-by-finite domain by \cite[Theorem 13.4.1]{Pa}, and $\gldim(H) = 3$ by Serre's theorem on finite extensions, \cite[Theorem 10.3.13]{Pa}. The normal Hopf subalgebras of $H$ are just the group algebras $kT$ with $T$ a normal subgroup of $G$. Thus there is no way to present $H$ as a finite extension of a commutative normal Hopf subalgebra $A$, with $\overline{H}$ semisimple, in view of property $(\ast)$ and Maschke's theorem. 
\end{Rmks}

We now gather results from the literature to show that smooth affine commutative-by-finite Hopf algebras share many of the attractive properties of commutative noetherian rings of finite global dimension. For the definition of a \emph{homologically homogeneous (hom. hom.)} ring, see \cite{BrH} or \cite{BrM}.

\begin{Thm}\label{smooththm} Let $H$ be an affine commutative-by-finite Hopf $k$-algebra of finite global dimension $d$.
\begin{enumerate}
\item $H$ is Auslander regular, AS-regular and GK-Cohen Macaulay of $\GKdim(H) = d$.
\item $H$ is homologically homogeneous.
\item $H$ is a finite direct sum of prime rings,
$$ H \; = \bigoplus^{t}_{\ell =1} H_{\ell}$$
with $H_{\ell}$ a prime hom. hom. algebra of dimension $d$ for all $\ell$.
\item $Z(H) = \bigoplus_{\ell = 1}^t Z(H_{\ell})$, where $Z(H_{\ell})$ is an affine integrally closed domain of GK-dimension $d$ for all $\ell$.
\end{enumerate}
\end{Thm}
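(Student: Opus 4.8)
The plan is to build on the homological input already assembled in the excerpt. First I would establish part (1): by Proposition \ref{gldimcrit}(3) the hypothesis $\gldim(H)=d<\infty$ already gives us good control, and combining the finite global dimension with Theorem \ref{injthm} (which says $H$ is AS-Gorenstein, Auslander Gorenstein and GK-Cohen Macaulay) immediately upgrades "Gorenstein" to "regular": an AS-Gorenstein algebra of finite global dimension is AS-regular, an Auslander-Gorenstein algebra of finite global dimension is Auslander-regular, and the GK-Cohen Macaulay property was already proved in Theorem \ref{injthm}(2) and is unchanged by the extra hypothesis. One should check that $\GKdim(H)=d$, i.e. that the global dimension equals the GK-dimension; this is the standard fact for Auslander-regular GK-Cohen Macaulay algebras (the AS-regular condition forces $\mathrm{injdim}=\gldim$, and GK-Cohen Macaulay forces $\mathrm{injdim}=\GKdim$), so part (1) follows formally.

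Next, part (2): an Auslander-regular, GK-Cohen Macaulay algebra which is module-finite over its centre is homologically homogeneous. Here I would invoke Corollary \ref{centrethm} (so $H$ is finite over its affine centre $Z(H)$) together with the characterisation of hom. hom. rings in \cite{BrM} — specifically, the result there that a module-finite-over-centre algebra which is Auslander-regular and Cohen-Macaulay over its centre is homologically homogeneous. The Cohen-Macaulay-over-$Z(H)$ property is exactly the bridge between the GK-Cohen Macaulay condition of part (1) and the commutative-algebra notion needed; this parallels how Theorem \ref{injthm}(4) deduced injective homogeneity from AS-Gorenstein plus GK-Cohen Macaulay plus finiteness over the centre, and the argument for hom. hom. is the regular analogue.

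For part (3), the decomposition into prime blocks, I would use the general structure theory of homologically homogeneous rings: a hom. hom. ring is a finite direct sum of prime hom. hom. rings. The idempotents realising the decomposition are central (a hom. hom. ring, being in particular a finite direct product of its "homogeneous blocks" of a fixed dimension, has no nontrivial central idempotents within a connected component, and the indecomposable ring direct factors are prime). Concretely, $H$ has finite global dimension, so it is a semiprime Noetherian ring with no infinite direct sums of ideals; its prime radical is zero and, being Noetherian of finite global dimension and PI, it is a finite direct product of indecomposable rings, each of which inherits finite global dimension, the Auslander-regular and GK-Cohen Macaulay properties, and hom. homogeneity, hence is prime of the same dimension $d$. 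Part (4) is then the commutative shadow of part (3): $Z(H)=\bigoplus Z(H_\ell)$ since the decomposition of $H$ is by central idempotents; and for each $\ell$, $Z(H_\ell)$ is the centre of a prime hom. hom. algebra finite over it, so by the standard consequence of homological homogeneity (see \cite{BrH}, \cite{BrM}) $Z(H_\ell)$ is an integrally closed Noetherian domain; it is affine by the Artin-Tate lemma as in Corollary \ref{centrethm}, and has $\GKdim = d$ because $H_\ell$ is finite over it and $\GKdim(H_\ell)=d$.

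The main obstacle is part (2): getting from the noncommutative "GK-Cohen Macaulay" condition to the hypothesis "$H$ is a Cohen-Macaulay module over its centre $Z(H)$" that the hom. hom. criterion of \cite{BrM} requires, and simultaneously verifying that $Z(H)$ itself is nice enough (Cohen-Macaulay, or at least that the relevant depth/dimension bookkeeping works) for the criterion to apply. This is precisely the step where one leans on Theorem \ref{injthm}(4) and its proof — the same reduction "AS-Gorenstein + GK-CM + finite over centre $\Rightarrow$ injectively homogeneous, hence CM over $Z(H)$" — and then replays it with "regular" in place of "Gorenstein". Once that translation is in hand, parts (3) and (4) are formal consequences of the structure theory of hom. hom. rings and require no essentially new idea.
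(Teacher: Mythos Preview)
Your proposal is correct and follows essentially the same route as the paper: deduce (1) from Theorem \ref{injthm} plus the definitions, deduce (2) and (3) from (1), finiteness over the centre (Corollary \ref{centrethm}) and the hom.\ hom.\ criteria of \cite{BrM} together with the prime decomposition of \cite{BrH2}, and then read off (4) from the structure theory of prime hom.\ hom.\ rings. Your ``main obstacle'' in (2) is in fact already cleared by Theorem \ref{injthm}(4), which established that $H$ is injectively homogeneous and a Cohen--Macaulay $Z(H)$-module; adding finite global dimension upgrades injective homogeneity to homological homogeneity directly, so no rerunning of that argument is needed.
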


\begin{proof} (1) This follows from Theorem \ref{injthm}(1),(2) and the definitions of these concepts.

(2), (3) Since $H$ is a noetherian PI ring, this follows from (1) and \cite[Theorem 5.4 and discussion on p. 1013]{StaZh}, or alternatively from (1), Corollary \ref{centrethm}, \cite[Theorem 4.8 and Corollary 5.4]{BrM} and \cite[Theorem 5.3]{BrH2}.

(4) From (3), $Z(H) = \bigoplus_{\ell = 1}^t Z(H_{\ell})$, with each $Z(H_\ell)$ an affine domain, since $H_{\ell}$ is prime and $Z(H_{\ell})$ is a factor of the affine algebra $Z(H)$. Since $H$ is a finite $Z(H)$-module by (2) or Corollary \ref{centrethm}, each $H_\ell$ is a finite module over the image $Z(H_\ell)$ of $Z(H)$ in $H_\ell$.  $\GKdim (Z(H_{\ell})) = d$ by (3) and \cite[Proposition 5.5]{KL}, since $H_{\ell}$ is a finite $Z(H_{\ell})$-module. Finally, the summands $H_\ell$ of $H$ are all hom. hom. by (3), so $Z(H_\ell)$ is integrally closed by \cite[6.1]{BrH2}.
\end{proof}

\subsection{Involutory commutative-by-finite Hopf algebras}\label{involut} Recall that a Hopf algebra $H$ is \emph{involutory} if the antipode $S$ of $H$ satisfies $S^2 = \mathrm{id}_H$. Commutative or cocommutative Hopf algebras are involutory \cite[1.5.12]{Mont}; and when $k$ has characteristic 0 and $H$ is finite dimensional, $H$ is involutory if and only if it is semisimple if and only if it is cosemisimple, \cite[2.6]{LR1} and \cite[Theorems 3 and 4]{LR2}. The following results are effectively special cases of results from \cite{WZ}.

\begin{Prop}\label{inv} Let $H$ be an involutory affine commutative-by-finite Hopf $k$-algebra.
\begin{enumerate}
\item If $\char k = 0$ then $\overline{H}$ is semisimple and $\gldim(H) < \infty$, so Theorem \ref{smooththm} applies to $H$. 
\item Suppose that $\char k = p > 0$ and that either 
\begin{enumerate}
\item[(i)] $H$ is semiprime and $p\nmid \mathrm{PI-degree}(H/P)$ for some minimal prime ideal $P$ of $H$; or 
\item[(ii)] $A$ is semiprime and $p\nmid \mathrm{dim}_k (\overline{H})$. 
\end{enumerate}
Then $\gldim(H) < \infty$, so the conclusions of Theorem \ref{smooththm} apply to $H$. 
\end{enumerate}
\end{Prop}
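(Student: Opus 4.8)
The plan is to reduce both parts to the single assertion that $\gldim(H) < \infty$: granting this, the closing clause of each part is immediate, since Theorem~\ref{smooththm} applies to \emph{every} affine commutative-by-finite Hopf algebra of finite global dimension, and $H$ is one by Theorem~\ref{properties}(1). First I would record two preliminaries. By Theorem~\ref{properties}(1),(4), $H$ is affine, noetherian and satisfies a polynomial identity. Moreover the finite-dimensional quotient $\overline{H} = H/A^+H$ is itself \emph{involutory}: by Theorem~\ref{properties}(2) the ideal $A^+H$ is a Hopf ideal, hence stable under the antipode $S_H$, so $S_H$ descends to an antipode $\bar S$ of $\overline{H}$, which is therefore involutory.

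For part~(1), in characteristic $0$ every finite-dimensional involutory Hopf algebra is semisimple, by the Larson--Radford theorem (see~$\S$\ref{involut}); hence $\overline{H}$ is semisimple --- which is the first assertion of~(1) --- and then $\gldim(H) < \infty$ follows directly from Proposition~\ref{gldimcrit}(2). For part~(2)(ii) I would run the same argument, using in place of Larson--Radford its positive-characteristic strengthening due to Etingof and Gelaki, to the effect that a finite-dimensional involutory Hopf algebra whose $k$-dimension is prime to $p$ is semisimple. Since $p \nmid \dim_k(\overline{H})$, this makes $\overline{H}$ semisimple; as $A$ is moreover semiprime by hypothesis, Proposition~\ref{gldimcrit}(1) yields $\gldim(H) < \infty$.

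Part~(2)(i) is the hard case, and the one genuinely requiring the results of \cite{WZ}. One cannot here argue through semisimplicity of $\overline{H}$: for instance, taking $H = k[x]$ with $x$ primitive over a field of characteristic $p$ and $A = k[x^p]$, the algebra $H$ is a domain --- hence semiprime, with $\mathrm{PI.deg}(H) = 1$ prime to $p$ --- while $\overline{H} \cong k[x]/(x^p)$ is not semisimple, so Proposition~\ref{gldimcrit} is unavailable. Instead the plan is to follow Wu and Zhang: by Theorem~\ref{injthm}, $H$ is Auslander--Gorenstein, AS--Gorenstein and GK--Cohen--Macaulay of injective dimension $d = \GKdim(H)$, and since $\gldim(H) = \prdim_H(k)$ by \cite[Section~2.4]{LorLor} it suffices to prove $\prdim_H(k) < \infty$, which then forces it to equal $d$. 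One would use involutoriness together with the numerical hypothesis $p \nmid \mathrm{PI.deg}(H/P)$ to obtain a separability-type splitting of $H$ over its centre in a neighbourhood of the minimal prime $P$, and then invoke the Auslander--Gorenstein and GK--Cohen--Macaulay properties of $H$ to spread finiteness of $\prdim_H(k)$ from that generic point over all of $\Spec(H)$. The globalisation step --- passing from the single prime $P$ to all primes of $H$ --- is where I expect the real difficulty to lie, and in practice one simply quotes the appropriate theorem of \cite{WZ}.
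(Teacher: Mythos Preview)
Your argument is correct throughout. Parts~(1) and~(2)(i) match the paper exactly: Larson--Radford plus Proposition~\ref{gldimcrit}(2) for~(1), and a direct citation of \cite[Proposition~3.5]{WZ} for~(2)(i).

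For~(2)(ii) you take a genuinely different route from the paper. You deduce that $\overline{H}$ is semisimple from involutoriness and $p \nmid \dim_k(\overline{H})$, and then invoke Proposition~\ref{gldimcrit}(1). (A small attribution point: this implication follows from Larson's trace formula $\tr(S^2) = \epsilon(\Lambda)\lambda(1)$, not from Etingof--Gelaki, whose positive-characteristic result runs in the opposite direction, semisimple-and-cosemisimple $\Rightarrow$ involutory.) The paper instead argues only that $\prdim_H(\overline{H}) < \infty$ --- using semiprimeness of $A$ to get $\gldim(A) < \infty$ and then inducing a finite $A$-projective resolution of $k$ up to $H$, as in the proof of Proposition~\ref{gldimcrit}(1) --- and then appeals to \cite[Lemma~1.6(3)]{WZ} to conclude $\gldim(H) < \infty$. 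Your version is more self-contained, avoiding the second citation to~\cite{WZ} and giving the extra information that $\overline{H}$ is semisimple; the paper's version is more uniform with~(2)(i) in routing everything through~\cite{WZ}.
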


\begin{proof} (1) When $\mathrm{char}\, k = 0$, $\overline{H}$ is semisimple by the result of Larson and Radford recalled before the proposition, so the result follows from Proposition \ref{gldimcrit}(2).

(2) If (i) holds then this is a special case of \cite[Proposition 3.5]{WZ}. Suppose that (ii) holds. Then $\prdim_H (\overline{H}) < \infty$ by the argument used in the proof of Proposition \ref{gldimcrit}(1), so the conclusion follows from \cite[Lemma 1.6(3)]{WZ}.
\end{proof}

%%%%%%%%%%%%%%%%%%%%%%%%%%%%%%%%%%%%%%%%%%%%%%%%%%%%%%%%%%%%%%%%%%%%

\section{Examples of affine commutative-by-finite Hopf algebras}\label{Examples}

Beyond the commutative and the finite dimensional Hopf algebras, here are some other families of examples, and - in $\S$\ref{nonex} - an important non-example.

\subsection{Enveloping algebras of Lie algebras in positive characteristic}\label{envposchar} Assume in this paragraph that $k$ has positive characteristic $p$. The universal enveloping algebra $U(\gg)$ of a finite-dimensional Lie algebra $\gg$ is a noetherian Hopf algebra and in positive characteristic it is finitely-generated over its centre \cite{Jacob}. When $\gg=\bigoplus_{i=1}^m kx_i$ is restricted, with restriction map $x\mapsto x^{[p]}$, $U(\gg)$ is a free module of finite rank over the central Hopf subalgebra $ A=k\langle x_i^p-x_i^{[p]}: 1\leq i\leq m \rangle $, which is a polynomial algebra on these primitive generators (\cite[Section 2.3]{Jantz} or \cite[Theorem I.13.2(8)]{BrGoodbook}). The Hopf quotient $\overline{H}$ is the restricted enveloping algebra of $\gg$, usually denoted by $u^{[p]}(\gg)$. More generally, for any finite-dimensional $k$-Lie algebra $\gg=\bigoplus_{i=1}^m kx_i$, $U(\gg)$ is a free module of finite rank over a central Hopf subalgebra $A=k\langle y_1,\ldots,y_m \rangle$, where each $y_i$ is a $p$-polynomial in $x_i$, \cite[Proposition 2]{Jacob}. In fact, $A$ is a polynomial algebra on these primitive generators.
These algebras $U(\gg)$ are involutory, being cocommutative, and are smooth domains - by \cite[Theorem XIII.8.2]{CarEil},
$$ \gldim( U(\gg)) = \dim \gg = \GKdim( U(\gg)).$$ 

\subsection{Quantized enveloping algebras and quantized coordinate rings at a root of unity}\label{quant} Quantized enveloping algebras $U_q(\gg)$ of semisimple finite-dimensional Lie algebras $\gg$ are noetherian Hopf algebras, \cite[Section 9.1]{ConProc}, \cite[Section 9.1A]{ChPr}. When $q=\epsilon$ is a primitive $\ell$th root of unity, $U_\epsilon(\gg)$ is a free module of rank $\ell^{\dim\gg}$ over a central Hopf subalgebra $Z_0$, \cite[Corollary and Theorem 19.1]{ConProc}, \cite[Propositions 9.2.7 and 9.2.11]{ChPr}. Its finite-dimensional Hopf quotient $\overline{H}=U_\epsilon(\gg)/Z_0^+U_\epsilon(\gg)$ is the restricted quantized enveloping algebra denoted by $u_\epsilon(\gg)$.

Quantized coordinate rings $\OO_q(G)$ of connected, simply connected, semisimple Lie groups $G$ are noetherian Hopf algebras, \cite[Sections 4.1 and 6.1]{ConLy}. If $q=\epsilon$ is a primitive $\ell$th root of unity, $\OO_\epsilon(G)$ contains a central Hopf subalgebra isomorphic to $\OO(G)$, \cite[Proposition 6.4]{ConLy}, \cite[Theorem III.7.2]{BrGoodbook}, and $\OO_{\epsilon}(G)$ is a free $\OO (G)$-module of rank $\ell^{\dim (G)}$ \cite{BGS}. Better, the extension $\OO(G)\subseteq \OO_\epsilon(G)$ is cleft in the sense of Remark \ref{propremarks}(4) (see \cite[Remark 2.18(b)]{AndruGar}). The finite-dimensional Hopf quotient $\overline{H}=\OO_\epsilon(G)/\OO(G)^+\OO_\epsilon(G)$ is the restricted quantized coordinate ring, sometimes denoted by $o_\epsilon(G)$.

The algebras in these families are thus commutative-by-finite, and are smooth of global dimensions $\dim(\gg)$ and $\dim(G)$ respectively, \cite[Theorem XIII.8.2]{CarEil}, \cite[Theorem 2.8]{BrGood}.

\subsection{Group algebras of finitely-generated abelian-by-finite groups}\label{abfingroup} Let $G$ be a finitely generated group with an abelian normal subgroup $N$ of finite index. Then $N$ is finitely generated, so $G$ is polycyclic-by-finite, and hence $kG$ is an affine cocommutative (and so involutory) Hopf algebra, noetherian by \cite[Corollary 10.2.8]{Pa}. Since $kG$ is a finite $kN$-module, it is commutative-by-finite. Further, $kG$ is cleft, (see Remark \ref{propremarks}(4)), decomposing as crossed products $kG \cong kN \#_\sigma k(G/N)$ \cite[Example 7.1.6]{Mont}. Letting $d$ denote the torsion-free rank of $N$, 
$$ \mathrm{inj.dim}( kG)= d. $$ 
By \cite[Theorem 10.3.13]{Pa}, $kG$ has finite global dimension (necessarily $d$) if and only if $\mathrm{char}\, k = 0$, or $\mathrm{char}\, k = p > 0$ and $G$ has no elements of order $p$.

\subsection{Prime regular affine Hopf algebras of Gelfand-Kirillov dimension 1}\label{GKdim1} These Hopf $k$-algebras were completely classified when $k$ is an algebraically closed field of characteristic 0 by Wu, Liu and Ding in \cite{Wu}, building on \cite{BrZh} and \cite{LWZ}. By a fundamental result of Small, Stafford and Warfield \cite{SSW}, a semiprime affine algebra of GK-dimension one is a finite module over its centre. But in fact more is true for these Hopf algebras - they are all commutative-by-finite, as can be checked  on a case-by-case basis.

With $k$ algebraically closed of characteristic 0, there are 2 finite families and 3 infinite families, as follows.
\begin{enumerate}
\item[(I)] The commutative algebras $k[x]$ and $k[x^{\pm 1}]$.
\item[(II)] A single cocommutative noncommutative example, the group algebra $H = kD$ of the infinite dihedral group $D = \langle a,b: a^2=1, aba=b^{-1} \rangle$. Here, $H$ is finite over the normal commutative Hopf subalgebra $A = k\langle b \rangle$.
\item[(III)] The infinite dimensional Taft algebras $T(n,t,q)=k\langle g,x: g^n=1, xg=qgx \rangle$, where $q$ is a primitive $n$th root of 1 in $k$, with $g$ group-like and $x$ $(1,g^t)$-primitive. The commutative normal Hopf subalgebra $A$ is $k[x^{n'}]$, where $n'=n/\gcd(n,t)$.
\item[(IV)] The generalised Liu algebras $B(n,w,q)$, where $n$ and $w$ are positive integers and $q$ is a primitive $n$th root of 1. We define
$$ B(n,w,q) :=k\langle x^{\pm 1}, g^{\pm 1}, y: x \textit{ central}, yg=qgy, g^n=x^w=1-y^n \rangle, $$
with $x$ and $g$ group-like and $y$ $(1,g)$-primitive. Clearly,  $A:=k[x^{\pm 1}]$ is a central Hopf subalgebra over which $B(n,w,q)$ is free of rank $n^2$.
\item[(V)] Let $m$ and $d$ be positive integers with $(1+m)d$ even, and let $q$ be a primitive $2m$th root of 1 in $k$. The Hopf algebras $D(m,d,q)$ are defined in \cite[Section 4.1]{Wu}. $D(m,d,q)$ is finitely generated over the normal commutative Hopf subalgebra $A=k[x^{\pm 1}]$, \cite[(4.7)]{Wu}.
\end{enumerate} 

The above algebras are all free over their respective normal commutative Hopf subalgebras. Families (I)-(IV) are pointed and decompose as crossed products $H\cong A\#_\sigma \overline{H}$; but $D(m,d,q)$ is not pointed, \cite[Proposition 4.9]{Wu}.
 
Given that these algebras are all regular, Theorem \ref{smooththm} applies to them - in particular they are all hereditary, by (1) of that result. The following questions are now obvious:

\begin{Qtn}\label{dimone} (i) What is the classification corresponding to the above when $k$ is algebraically closed of positive characteristic $p$?

(ii) Can the classification be completed in characteristic 0 if the hypothesis of regularity is omitted?
\end{Qtn}

Regarding (ii), considerable progress is made in \cite{Liu}, including the construction of many examples. However, \emph{all} these new examples are commutative-by-finite, as is explicitly noted in \cite{Liu}. Indeed, Liu conjectures \cite[Conjecture 7.19]{Liu} that, in characteristic 0, every prime affine Hopf $k$-algebra of GK-dimension 1 is commutative-by-finite.

\subsection{Noetherian PI Hopf domains of Gelfand-Kirillov dimension two}\label{GKdim2} Continue to assume that $k$ is algebraically closed of characteristic 0. Let $H$ be a noetherian Hopf $k$-algebra domain with $\GKdim (H) = 2$. These were classified in \cite{GoodZh} under the additional assumption that 
$$ \Ext_H^1({_Hk},{_Hk})\neq 0. \qquad \qquad (\sharp) $$
By \cite[Proposition 3.8(c)]{GoodZh}, $(\sharp)$ is equivalent to $H$ having an infinite dimensional commutative Hopf factor; that is, to the quantum group $H$ containing a one-dimensional classical subgroup. There are 5 classes of such Hopf algebras, as follows.
\begin{enumerate}
\item[(I)] The group algebras over $k$ of $\mathbb{Z}\times \mathbb{Z}$ and $\mathbb{Z} \rtimes \mathbb{Z}$.
\item[(II)] The enveloping algebras of the two 2-dimensional $k$-Lie algebras.
\item[(III)] Hopf algebras $A(n,q)$, for $n \in \mathbb{Z}$ and $q \in k^{\times}$, as algebras the localised quantum plane $k\langle x^{\pm 1},y: xy = qyx \rangle$, with $x$ group-like and $y$ $(1,x^n)$-primitive.
\item[(IV)] Hopf algebras $B(n, p_0, \dots , p_s,q)$, where $s \geq 2$, $n,p_0,\ldots , p_s$ are positive integers with $p_0 \mid n$ and $\{p_i : i \geq 1 \}$ strictly increasing and pairwise relatively prime, and $q$ is a primitive $\ell$th root of 1 where $\ell = (n/p_0)p_1 \ldots p_s$. Then $B(n, p_0, \dots , p_s,q)$ is the subalgebra of the localised quantum plane from (III) generated by $x^{\pm 1}$ with $\{y^{m_i} : 1 \leq i \leq s \}$, where $m_i := \Pi_{j \neq i} p_j.$ This supports a Hopf structure with $x$ group-like and $y^{m_i}$ being $(1,x^{m_i n})$-primitive.
\item[(V)] Hopf algebras $C(n)$ for each integer $n$, $n \geq 2$, where $C(n) := k[y^{\pm 1}][x; (y^n - y)\frac{d}{dy}]$, with $y$ group-like and $x$ being $(y^{n-1},1)$-primitive.
\end{enumerate}

All the algebras in the above list which satisfy a polynomial identity are commutative-by-finite. Namely, these are: 
\begin{enumerate}
\item[$\bullet$] the group algebras in (I). The group algebra of $\mathbb{Z}\rtimes\mathbb{Z}$ is module-finite over the normal commutative Hopf subalgebra $\mathbb{Z}\times 2\mathbb{Z}$.
\item[$\bullet$] the enveloping algebra of the 2-dimensional abelian Lie algebra in (II).
\item[$\bullet$] the algebras $A(n,q)$ from (III) where $q$ is a root of unity. These algebras are module-finite over the normal commutative Hopf subalgebra $A=k\langle (x^l)^{\pm 1}, y^{l'} \rangle$, where $q$ is a primitive $l$th root of 1 and $l'=l/\gcd(n,l)$. 
\item[$\bullet$] the algebras in (IV). It is not hard to see that $A := k\langle (y^{m_i})^{p_i}, x^{\pm \ell} \rangle$ is a normal commutative Hopf subalgebra over which $B(n,p_0,\ldots , p_s,q)$ is a finite module.
\end{enumerate}

As was shown in \cite[Propositions 1.6 and 0.2a]{GoodZh}, all the families have global dimension 2, except for (IV), whose members have infinite global dimension, being free over the coordinate ring $k\langle y^{m_i} : 1 \leq i \leq s \rangle$ of a singular curve.

In \cite{WZZ}, it was shown that not all noetherian Hopf $k$-algebra domains of GK-dimension 2 satisfy $(\sharp)$. More precisely, a key discovery of \cite{WZZ} was the existence of:
\begin{enumerate}
\item[(VI)] an infinite family of noetherian Hopf algebra domains of GK-dimension 2, with $\mathrm{Ext}_H^1({_Hk}, {_Hk}) = 0$ for all members of the family. 
\end{enumerate}
The algebras in (VI) are commutative-by-finite - by \cite[Theorem 2.7]{WZZ}, each of them is a finite-rank free module over a central Hopf subalgebra which is the coordinate ring of a 2-dimensional solvable group. Almost all of them have infinite global dimension, some of them being finite-rank free modules over an algebra in (IV).

It is conjectured in \cite[Introduction]{WZZ} that, when $k$ has characteristic 0, the families (I)-(VI) constitute \emph{all} the affine Hopf $k$-algebra domains of GK-dimension 2, at least in the pointed case. Namely, the authors ask:

\begin{Qtn}\label{GKtwoqn}{\rm (Wang, Zhang, Zhuang, \cite{WZZ})} Let $k$ be algebraically closed of characteristic 0, and let $H$ be an affine Hopf $k$-algebra domain of GK-dimension two. If $H$ is pointed, does it belong to the families (I)-(VI)?
\end{Qtn}

At present every known affine Hopf $k$-algebra domain of GK-dimension two is generated by group-like and skew primitive elements. The answer to Question \ref{GKtwoqn} is affirmative for Hopf algebras so generated, \cite[Corollary 0.2]{WZZ}.

\subsection{Affine PI but not commutative-by-finite example}\label{nonex} Gelaki and Letzter gave an example  \cite{GL} of a prime noetherian Hopf $k$-algebra $U$ of Gelfand-Kirillov dimension 2 which is a finite module over its centre, but which is \emph{not} commutative-by-finite. Their example is the bosonisation of the enveloping algebra of the Lie superalgebra $\mathrm{pl}(1,1)$. It is a finite module over its centre, and so is affine PI. But $U$ contains a nonzero element $u$ forming part of a PBW basis of $U$, with $u^2 = 0$. Thus $U$ is not a domain, so does not feature in the list in $\S$\ref{GKdim2}. Moreover, $U$ is a free $k\langle u \rangle$-module, so that 
$$ \prdim_{U}(k) \geq \prdim_{k\langle u \rangle}(k) = \infty,$$
forcing $\gldim(U) = \infty$. Thus, the following question remains open at present:

\begin{Qtn}\label{regqn} Is every affine noetherian regular PI Hopf algebra commutative-by-finite?
\end{Qtn}

%%%%%%%%%%%%%%%%%%%%%%%%%%%%%%%%%%%%%%%%%%%%%%%%%%%%%%%%%%%%%%%%%%

\section{$\overline{H}$-Stability and orbital semisimplicity}\label{stability}

We study here the action of $\overline{H}$ on the ideals of $A$. It is convenient in $\S$\ref{orbitsec} and $\S$\ref{sectionorbss} to work in a broader context, returning to commutative-by-finite Hopf algebras in $\S$\ref{stabsec}.

\subsection{Hopf orbits in $\maxspec(A)$}\label{orbitsec}

\begin{Def}\label{stabledef}
Let $T$ be a Hopf algebra, $R$ a left $T$-module algebra. 
\begin{enumerate}
\item A subspace $V$ of $R$ is $T$-\emph{stable} if $t\cdot v \in V$ for all $t \in T$ and $v \in V$.
\item The $T$-\emph{core} of an ideal $I$ of $R$ is the ideal $ I^{(T)} \; = \; \{ x \in I : t\cdot x \in I, \,  \forall t \in T \}.$
\end{enumerate}
\end{Def}

Note that the $T$-core $I^{(T)}$ of $I$ will in general be larger than the subspace $R^T \cap I$ of $T$-invariants of $I$ - hence the use of brackets in our notation. The following lemma is easily checked.

\begin{Lem}\label{stablecore}
With the above notation, $I^{(T)}$ is the largest $T$-stable subspace of $R$ contained in $I$.
\end{Lem}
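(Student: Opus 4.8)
The plan is to check three things: that $I^{(T)}$ is a $T$-stable $k$-subspace of $R$, that it is contained in $I$, and that it contains every $T$-stable subspace of $R$ lying inside $I$. None of these requires more than unwinding the definition together with the left $T$-module algebra axioms, so I expect no genuine obstacle; the only point needing a moment's care is the use, in the $T$-stability step, of the identity $t\cdot(s\cdot x)=(ts)\cdot x$ in combination with the fact that the defining condition of $I^{(T)}$ quantifies over \emph{all} of $T$.

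First, $I^{(T)}$ is a $k$-subspace contained in $I$: the conditions $x\in I$ and, for each fixed $t\in T$, $t\cdot x\in I$ are $k$-linear in $x$ because $I$ is a $k$-subspace and each operator $x\mapsto t\cdot x$ is $k$-linear; and $I^{(T)}\subseteq I$ by definition. Next, $T$-stability: given $x\in I^{(T)}$ and $s\in T$, I must show $s\cdot x\in I^{(T)}$, i.e. that $s\cdot x\in I$ and $t\cdot(s\cdot x)\in I$ for every $t\in T$. The first holds because $x\in I^{(T)}$, applying the defining condition to the element $s$. For the second, the left-module identity gives $t\cdot(s\cdot x)=(ts)\cdot x$, and since $ts\in T$ and $x\in I^{(T)}$ this lies in $I$. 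Hence $s\cdot x\in I^{(T)}$, so $I^{(T)}$ is $T$-stable. (One checks similarly that $I^{(T)}$ is a two-sided ideal, justifying the terminology in Definition \ref{stabledef}: for $r\in R$ and $x\in I^{(T)}$, writing $\Delta(t)$ as a finite sum of simple tensors and using the module algebra axiom, $t\cdot(rx)=\sum (t_1\cdot r)(t_2\cdot x)\in I$ since each $t_2\cdot x\in I$ and $I$ is a left ideal; likewise $t\cdot(xr)\in I$; together with $rx,xr\in I$ this gives $rx,xr\in I^{(T)}$.)

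Finally, maximality: let $V\subseteq I$ be any $T$-stable subspace of $R$. For $v\in V$ and $t\in T$ we have $t\cdot v\in V\subseteq I$, so $v$ satisfies the defining condition of $I^{(T)}$; thus $v\in I^{(T)}$ and hence $V\subseteq I^{(T)}$. Combined with the previous paragraph, this shows $I^{(T)}$ is the largest $T$-stable subspace of $R$ contained in $I$, as claimed.
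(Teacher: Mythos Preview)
Your proof is correct and is exactly the natural verification one would write; the paper itself omits the argument entirely, simply stating that the lemma ``is easily checked.'' Your parenthetical confirmation that $I^{(T)}$ is an ideal is a nice bonus, since the paper's Definition~\ref{stabledef}(2) calls $I^{(T)}$ an ideal without justification.
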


Let $A$ be a commutative left $T$-module algebra. The notion of the $T$-core of an ideal of $A$ leads to an equivalence relation on the prime spectrum of $A$, as discussed in a more general setting by Skryabin in \cite{Sk10}. However we limit attention here to the case of the maximal ideals of a commutative $T$-module algebra $A$, although everything could be done under weaker hypotheses, as in \cite{Sk10}.

\begin{Def}\label{deforbit}
Let $T$ be a Hopf algebra and $A$ a commutative left $T$-module algebra.
\begin{enumerate}
\item Define an equivalence relation $\sim^{(T)}$ on $\maxspec(A)$ as follows: for $\mm,\mm'\in\maxspec(A)$, $\mm\sim^{(T)} \mm'$ if and only if $ \mm^{(T)}=\mm'^{(T)}.$

\item For each $\mm\in\maxspec(A)$, its $T$-\textit{orbit} is the set of maximal ideals with the same $T$-core, $$\OO_{\mm} = \lbrace \mm'\in\maxspec(A): {\mm'}^{(T)} = \mm^{(T)} \rbrace. $$
\end{enumerate}
\end{Def}

The following key result is essentially due to Skryabin \cite[Theorems 1.1, 1.3]{Sk10}. For affine commutative-by-finite Hopf algebras, we shall in due course (Theorem \ref{bound}) obtain an upper bound for the dimensions of the finite dimensional commutative algebras $A/\mm^{(\overline{H})}$ appearing below, and hence for the cardinalities of the orbits $\mathcal{O}_{\mm}$.

\begin{Prop}\label{Skryab}
Let $T$ be a finite-dimensional Hopf algebra, $A$ an affine commutative left $T$-module algebra and $\mm\in\maxspec(A)$.
\begin{enumerate}
\item $A/\mm^{(T)}$ is a $T$-simple algebra. That is, the only $T$-stable ideals of $A/\mm^{(T)}$ are the trivial ones.

\item $A/\mm^{(T)}$ is a Frobenius algebra. In particular,  $A/\mm^{(T)}$ is finite dimensional.

\item $\OO_{\mm} = \lbrace \mm'\in\maxspec(A): \mm^{(T)}\subseteq \mm' \rbrace$.

\item $\OO_{\mm}$ is finite.
\end{enumerate}
\end{Prop}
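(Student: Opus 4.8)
The plan is to deduce all four statements from a combination of Skryabin's invariant-theoretic results and the finite-dimensionality of $T$. First I would establish (1): the $T$-core $\mm^{(T)}$ is, by Lemma \ref{stablecore}, the largest $T$-stable subspace of $A$ contained in $\mm$, hence the largest $T$-stable \emph{ideal} of $A$ contained in $\mm$ (being an ideal is automatic since $\mm^{(T)}$ is the core of the ideal $\mm$). Any $T$-stable ideal of $A/\mm^{(T)}$ pulls back to a $T$-stable ideal of $A$ containing $\mm^{(T)}$; if it is proper it is contained in some maximal ideal $\mm'$, and then by maximality of the core it must already lie inside $\mm'^{(T)}$. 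The point is that $A/\mm^{(T)}$ has no nontrivial $T$-stable ideals precisely because $\mm^{(T)}$ was chosen maximal among $T$-stable ideals inside \emph{a} maximal ideal — so one needs the observation that a proper $T$-stable ideal $J \supseteq \mm^{(T)}$ sits inside $\mm'$ for some maximal $\mm'$, forcing $J \subseteq \mm'^{(T)}$, and then $\mm'^{(T)} = \mm^{(T)}$ follows because $\mm'^{(T)} \supseteq J \supseteq \mm^{(T)}$ and $\mm'^{(T)}$ is itself $T$-stable inside the maximal ideal $\mm'$... here I would need to invoke that $\mm \sim^{(T)} \mm'$, i.e.\ Skryabin's result that $T$-cores inside maximal ideals are comparable only if equal; this is exactly \cite[Theorem 1.1]{Sk10}, which gives that $A/\mm^{(T)}$ is $T$-simple.

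Next, (2): $A/\mm^{(T)}$ is a $T$-simple affine commutative algebra, and by Theorem \ref{finmodinv} applied to the $T$-module algebra $A/\mm^{(T)}$, it is a finite module over its $T$-invariants $(A/\mm^{(T)})^T$. But a $T$-simple algebra has no nontrivial $T$-stable ideals, and for a commutative $T$-module algebra the invariant ring is, up to the usual arguments, a field (the preimage of any maximal ideal of the invariants generates a proper $T$-stable ideal), so $(A/\mm^{(T)})^T = k$ since $k$ is algebraically closed and $A/\mm^{(T)}$ is finite over it; hence $\dim_k A/\mm^{(T)} < \infty$. That it is in fact \emph{Frobenius} is the substance of \cite[Theorem 1.3]{Sk10}: a finite-dimensional $T$-simple commutative $T$-module algebra, for $T$ finite-dimensional Hopf, carries a Frobenius form; I would cite this directly rather than reprove it.

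For (3): the inclusion $\OO_{\mm} \subseteq \{\mm' : \mm^{(T)} \subseteq \mm'\}$ is trivial since $\mm'^{(T)} = \mm^{(T)}$ forces $\mm^{(T)} \subseteq \mm'$. Conversely, if $\mm^{(T)} \subseteq \mm'$ then $\mm'^{(T)}$ is a $T$-stable ideal of $A/\mm^{(T)}$ (it contains $\mm^{(T)}$, being a core inside a maximal ideal containing $\mm^{(T)}$), hence by (1) it is either $0$ in the quotient — i.e.\ $\mm'^{(T)} = \mm^{(T)}$, giving $\mm' \in \OO_\mm$ — or it is all of $A/\mm^{(T)}$, which is impossible since $\mm'^{(T)} \subseteq \mm' \subsetneq A$. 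Finally (4) is immediate: by (2), $A/\mm^{(T)}$ is a finite-dimensional commutative algebra, so it has only finitely many maximal ideals, and by (3) these correspond bijectively to the elements of $\OO_\mm$; hence $\OO_\mm$ is finite.

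The main obstacle is part (2), specifically the Frobenius conclusion: proving that a finite-dimensional $T$-simple commutative module algebra is Frobenius is genuinely Skryabin's theorem and is not a formality — it is where the finite-dimensionality of $T$ and the full force of \cite[Theorem 1.3]{Sk10} are used. Everything else is soft once one has (1) (comparability of cores, again Skryabin) and Theorem \ref{finmodinv} in hand; I would be careful in (1) to phrase the argument so that the appeal to \cite[Theorems 1.1, 1.3]{Sk10} is clean rather than circular.
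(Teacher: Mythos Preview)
Your approach is correct and essentially the same as the paper's: cite Skryabin for (1), use finiteness over invariants (Theorem~\ref{finmodinv}) plus Skryabin's Frobenius/quasi-Frobenius theorem for (2), deduce (3) from the $T$-simplicity in (1), and (4) from (2) and (3).

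Two minor differences worth noting. For (2), the paper works upstairs in $A$ rather than in the quotient: since $A/\mm = k$ one has $A^T/(\mm\cap A^T) = k$ immediately, so Theorem~\ref{finmodinv} gives $\dim_k A/(\mm\cap A^T)A < \infty$ and hence $\dim_k A/\mm^{(T)} < \infty$ without the extra step of arguing that $(A/\mm^{(T)})^T$ is a field equal to $k$. The paper then records explicitly that $A/\mm^{(T)}$, being artinian, coincides with its own classical quotient ring, and that it is $T$-semiprime (trivially, being $T$-simple); these are precisely the hypotheses needed to invoke \cite[Theorem~1.3]{Sk10}, which gives quasi-Frobenius, hence Frobenius in the commutative case. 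Your version hides this bookkeeping inside the phrase ``cite this directly''; just make sure you check those hypotheses. For (1), the paper cites \cite[Proposition~3.5]{Sk10} rather than Theorem~1.1; as you correctly observe, your attempted self-contained argument does not close (the step $\mm'^{(T)} \subseteq \mm^{(T)}$ genuinely needs Skryabin), so the clean move is simply to cite the result.
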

\begin{proof}

(1) This is a special case of \cite[Proposition 3.5]{Sk10}.
\medskip

\noindent (2) Since $A/\mm = k$, then $A^T/(\mm \cap A^T) = k$. Thus, by Theorem \ref{finmodinv}, $A/(\mm \cap A^T)A$ is a finite dimensional algebra, and therefore so is its factor algebra $A/\mm^{(T)}$. In particular, being artinian, it equals its classical ring of quotients:
\begin{equation}\label{quotient} A/\mm^{(T)}  = Q(A/\mm^{(T)}).
\end{equation}
Moreover, by (1),  $A/\mm^{(T)}$ is $T$-semiprime.  Thus, by \cite[Theorem 1.3]{Sk10}, $Q(A/\mm^{(T)})$ is quasi-Frobenius. The result follows from this and (\ref{quotient}), noting that a commutative quasi-Frobenius $k$-algebra is actually Frobenius.
\medskip

\noindent (3) If $\mm'\in\OO_\mm$, $\mm^{(T)} = {\mm'}^{(T)} \subseteq \mm'$ and so $\OO_{\mm} \subseteq \lbrace \mm'\in\maxspec(A): \mm^{(T)}\subseteq \mm' \rbrace$. For the reverse inclusion, suppose some $\mm'\in\maxspec(A)$ contains $\mm^{(T)}$, so that $\mm^{(T)} \subseteq {\mm'}^{(T)}$. Hence, by (1), ${\mm'}^{(T)} = \mm^{(T)}$; that is, $\mm'\in \OO_\mm$.
\medskip

\noindent (4) is an immediate consequence of (2) and (3).
\end{proof}

\subsection{Orbital semisimplicity}\label{sectionorbss}
 
When the Hopf algebra $T$ of Definition \ref{deforbit} is a group algebra $kG$ of a finite group $G$, the setting is the familiar one of classical invariant theory. In particular, $G$ acts by $k$-algebra automorphisms on $A$ and for $\mm \in \maxspec(A)$, $\mathcal{O}_{\mm} = \{\mm^g : g \in G \}$ and $\mm^{(kG)} = \bigcap\{\mm^g : g \in G \}$, so that $A/\mm^{(kG)}$ is a finite direct sum of copies of $k$. We isolate this desirable state of affairs in the following definition. 

\begin{Def}\label{orbsemi}
Let $T$ be a finite dimensional Hopf algebra and $A$ an affine commutative left $T$-module algebra. Then, $A$ is $T$-\emph{orbitally semisimple} if $A/\mm^{(T)}$ is semisimple for every $\mm\in\maxspec(A)$. The prefix $T$ will be omitted when this is clear from the context.
\end{Def}

In view of Proposition \ref{Skryab}(3), $A$ is $T$-orbitally semisimple if and only if 
\begin{equation}\label{equisemi} \mm^{(T)} = \bigcap_{\mm'\in\OO_\mm} \mm'
\end{equation}
for every $\mm\in\maxspec(A)$.

At first glance the following result of Montgomery and Schneider, \cite[Theorem 3.7, Corollary 3.9]{MS}, (generalising an earlier result of Chin \cite[Lemma 2.2]{Chin} and specialised to the present setting), might suggest that orbital semisimplicity always holds, at least when $T$ is pointed. However this is not the case, as the example which follows illustrates. 

\begin{Prop}\label{pointed}
Let $T$ be a finite-dimensional Hopf algebra and $A$ an affine commutative $T$-module algebra. Let $T_0\subset T_1\subset \ldots \subset T_m=T$ be the coradical filtration of $T$, and let $\langle T_0 \rangle$ be the Hopf subalgebra of $T$ generated by $T_0$. Then,
\begin{enumerate}
\item For an ideal $I$ of $A$, $ (I^{(\langle T_0 \rangle)})^{m+1} \subseteq I^{(T)}. $

\item For all $\mm,\mm'\in\maxspec(A)$, $\mm^{(T)}=\mm'^{(T)}$ if and only if $\mm^{(\langle T_0 \rangle)}=\mm'^{(\langle T_0 \rangle)}$.

\item Suppose $T$ is pointed, so that $T_0 = \langle T_0 \rangle$. For any $\mm,\mm'\in\maxspec(A)$, $\mm^{(T)}=\mm'^{(T)}$ if and only if $\mm'=g\cdot \mm$ for some group-like element $g$ of $T$.
\end{enumerate}
\end{Prop}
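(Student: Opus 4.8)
The plan is to prove the three statements in sequence, deducing each from its predecessors. For part (1), I would use the coradical filtration structure: the key algebraic fact is that $\Delta(T_n) \subseteq \sum_{i+j=n} T_i \otimes T_j$, so that iterating the comultiplication distributes the filtration degree. Given $m+1$ elements $x_0, \ldots, x_m \in I^{(\langle T_0\rangle)}$ and an arbitrary $t \in T = T_m$, I would apply $t$ to the product $x_0 x_1 \cdots x_m$ via the $(m+1)$-fold coproduct $\Delta^{(m)}(t) = \sum t_{(0)} \otimes \cdots \otimes t_{(m)}$. Since $t \in T_m$, in each summand at least one tensor factor $t_{(i)}$ lies in $T_0$ (because the filtration degrees sum to at most $m$, so they cannot all be $\geq 1$). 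The factor $x_i$ is then moved by $t_{(i)} \in T_0 = \langle T_0\rangle$ back into $I$, and since $A$ is commutative and $I$ is an ideal, the whole product lands in $I$. Hence $t \cdot (x_0\cdots x_m) \in I$, which says exactly $(I^{(\langle T_0\rangle)})^{m+1} \subseteq I^{(T)}$. Making the "at least one factor in $T_0$" argument precise — unwinding $\Delta^{(m)}(T_m)$ against the filtration — is the one genuinely technical point, and I expect it to be the main obstacle; it is a routine induction on $m$ using coassociativity, but needs care with the combinatorics of which index lands in $T_0$.

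For part (2), the inclusion $\langle T_0 \rangle \subseteq T$ immediately gives $\mm^{(T)} \subseteq \mm^{(\langle T_0\rangle)}$ (a $T$-stable subspace is a fortiori $\langle T_0\rangle$-stable), so the "only if" direction of the equivalence is clear: if $\mm^{(T)} = \mm'^{(T)}$ then applying Proposition \ref{Skryab}(3) to the finite-dimensional Hopf subalgebra $\langle T_0\rangle$ (which is finite-dimensional since $T$ is) one shows $\mm^{(\langle T_0\rangle)} = \mm'^{(\langle T_0\rangle)}$: indeed $\mm^{(\langle T_0\rangle)} \subseteq \mm$ and $\mm^{(\langle T_0\rangle)} \subseteq \mm^{(T)} \subseteq \mm'$... wait, rather I use that $\mm^{(T)}$ sits between the two cores and invoke the $T_0$-simplicity of $A/\mm^{(\langle T_0\rangle)}$ from Proposition \ref{Skryab}(1). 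Conversely, if $\mm^{(\langle T_0\rangle)} = \mm'^{(\langle T_0\rangle)} =: J$, then by part (1), $J^{m+1} \subseteq \mm^{(T)}$ and $J^{m+1} \subseteq \mm'^{(T)}$. By Proposition \ref{Skryab}(1), $A/\mm^{(T)}$ and $A/\mm'^{(T)}$ are $T$-simple; and $\mm^{(T)}, \mm'^{(T)}$ both contain $J^{m+1}$, hence both contain $\mathrm{rad}(J)^{?}$ — more directly, since $J \subseteq \mm \cap \mm'$ and $A/\mm^{(T)}$ is $T$-simple with $\mm^{(T)} \supseteq J^{m+1}$, one gets that the image of $J$ in $A/\mm^{(T)}$ is a nilpotent $T$-stable ideal, hence zero by $T$-simplicity (a $T$-simple algebra with a proper nilpotent stable ideal — the nilradical is stable, so it must be $0$); thus $J \subseteq \mm^{(T)}$, and symmetrically $J \subseteq \mm'^{(T)}$. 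Then $\mm^{(T)}/J$ is a $T$-stable ideal of $A/J$ contained in $\mm/J$, and since $J = \mm^{(\langle T_0\rangle)}$ is the largest $\langle T_0\rangle$-stable ideal in $\mm$ while $\mm^{(T)}$ is $\langle T_0\rangle$-stable and contained in $\mm$, we get $\mm^{(T)} \subseteq J$, so $\mm^{(T)} = J = \mm'^{(T)}$ by symmetry.

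For part (3), when $T$ is pointed the coradical $T_0 = kG(T)$ is the group algebra of the group $G(T)$ of group-like elements, so $\langle T_0\rangle = T_0 = kG(T)$ is itself a group algebra. Then $A$ is a module algebra over the group $G(T)$ acting by algebra automorphisms, and as noted in the discussion preceding Definition \ref{orbsemi}, for the group-algebra case $\mm^{(kG(T))} = \bigcap_{g \in G(T)} g\cdot\mm$ and the orbit is literally $\{g \cdot \mm : g \in G(T)\}$. Therefore $\mm^{(\langle T_0\rangle)} = \mm'^{(\langle T_0\rangle)}$ holds if and only if $\bigcap_g g\cdot\mm = \bigcap_g g\cdot\mm'$, which (by the maximality and the fact that the $g\cdot\mm$ are precisely the maximal ideals containing the common core, via Proposition \ref{Skryab}(3) applied to $kG(T)$) holds if and only if $\mm' = g\cdot\mm$ for some $g \in G(T)$. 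Combining with part (2) gives $\mm^{(T)} = \mm'^{(T)} \iff \mm' = g\cdot\mm$ for some group-like $g$, as claimed. The only subtlety here is confirming that $\bigcap_g g\cdot\mm = \bigcap_g g\cdot\mm'$ forces the orbits $\{g\cdot\mm\}$ and $\{g\cdot\mm'\}$ to coincide — this is immediate since $\mm$ is one of the finitely many maximal ideals lying over its core and the set of such is the orbit, so equal cores give equal orbits and in particular $\mm'$ lies in the $G(T)$-orbit of $\mm$.
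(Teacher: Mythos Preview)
The paper does not prove this proposition itself; it quotes the result from Montgomery--Schneider \cite[Theorem 3.7, Corollary 3.9]{MS}. So there is no ``paper's proof'' to compare against, and I assess your argument on its own merits.

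Your plan for part (1) is the standard one and is correct: iterating $\Delta$ against the coradical filtration, one has $\Delta^{(m)}(T_m) \subseteq \sum_{i_0+\cdots+i_m \leq m} T_{i_0}\otimes\cdots\otimes T_{i_m}$, so in every summand some $i_j = 0$; hence each term of $t\cdot(x_0\cdots x_m)$ has a factor $(t_{(j)}\cdot x_j) \in I$. Part (3) is also fine.

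Part (2), however, contains a genuine error. In your ``if'' direction you assert that the image of $J = \mm^{(\langle T_0\rangle)}$ in $A/\mm^{(T)}$ is a \emph{$T$-stable} nilpotent ideal, and conclude by $T$-simplicity that $J \subseteq \mm^{(T)}$, hence $\mm^{(T)} = J$. But $J$ is only $\langle T_0\rangle$-stable, not $T$-stable, so this step fails; and the conclusion $\mm^{(T)} = \mm^{(\langle T_0\rangle)}$ is false in general --- the Taft algebra example immediately following the proposition in the paper gives $\mm^{(T)} = \langle u-a, v^n\rangle \neq \langle u-a, v\rangle = \mm^{(\langle T_0\rangle)}$.

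You have also inverted which direction needs part (1). The implication $\mm^{(\langle T_0\rangle)} = \mm'^{(\langle T_0\rangle)} \Rightarrow \mm^{(T)} = \mm'^{(T)}$ is the \emph{easy} one and requires only the containment $\mm^{(T)} \subseteq \mm^{(\langle T_0\rangle)}$: from $\mm^{(T)} \subseteq \mm^{(\langle T_0\rangle)} = \mm'^{(\langle T_0\rangle)} \subseteq \mm'$ one gets $\mm^{(T)} \subseteq \mm'^{(T)}$ by maximality of the core, and symmetry finishes. It is the \emph{converse} that needs part (1): if $\mm^{(T)} = \mm'^{(T)}$ then $(\mm^{(\langle T_0\rangle)})^{m+1} \subseteq \mm^{(T)} = \mm'^{(T)} \subseteq \mm'$, and primeness of $\mm'$ gives $\mm^{(\langle T_0\rangle)} \subseteq \mm'$, whence $\mm^{(\langle T_0\rangle)} \subseteq \mm'^{(\langle T_0\rangle)}$; symmetry again gives equality. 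Your treatment of this direction (``wait, rather I use that $\mm^{(T)}$ sits between the two cores\ldots'') never actually invokes part (1) and does not go through as written.
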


\begin{Ex}\label{Taft} Fix an integer $n$, $n \geq 2$, and let $q$ be a primitive $n$th root of 1 in $k$. Let $T$ be the $n$-dimensional Taft algebra, 
$$ T: = k\langle g,x: g^n=1,x^n=0,xg=qgx\rangle, $$
with $g$ group-like and $x$ $(1,g)$-primitive. Let $A$ be the polynomial algebra $k[u,v]$.

As shown by Allman, \cite[$\S$3]{All}, $A$ is a left $T$-module algebra with the action defined by
$$ g\cdot u=u, \qquad g\cdot v=qv, \qquad x\cdot u=0, \qquad x\cdot v=u. $$ 
This action is not orbitally semisimple. Indeed, one can easily check with the aid of Proposition \ref{pointed}(3) that, for $a \in k^\times$ and  $\mm:=\langle u-a,v \rangle$,
$$ \mm^{(T)} = \langle u-a, v^n \rangle. $$
\end{Ex}
\medskip

In the positive direction, we have the following result. 

\begin{Thm}\label{orbhold}
Let $T$ be a finite dimensional Hopf algebra and $A$ an affine commutative $T$-module algebra. Then, $A$ is $T$-orbitally semisimple in each of the following cases:
\begin{enumerate}
\item the action is trivial;
\item the action factors through a group;
\item $T$ is cosemisimple;
\item $T$ is involutory and $\char k = 0$ or $\char k  = p > \dim_k (A/\mm^{(T)})$ for all $\mm\in\maxspec(A)$.
\end{enumerate}
\end{Thm}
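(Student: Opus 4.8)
The plan is to prove each of the four cases by reducing to a situation where $A/\mm^{(T)}$ is visibly a finite product of copies of $k$, i.e.\ a semisimple commutative algebra. Recall from Proposition~\ref{Skryab}(2) that $A/\mm^{(T)}$ is always a finite dimensional (commutative, hence Frobenius) algebra, so the issue is only whether it is reduced.

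\emph{Cases (1) and (2).} If the action is trivial then $\mm^{(T)} = \mm$ by definition of the core, so $A/\mm^{(T)} = k$. If the action factors through a group algebra $kG$, then as noted in the discussion preceding Definition~\ref{orbsemi} one has $\mm^{(T)} = \mm^{(kG)} = \bigcap_{g \in G}\, g\cdot\mm$, a finite intersection of distinct maximal ideals, so $A/\mm^{(T)}$ is a finite direct sum of copies of $k$ by the Chinese Remainder Theorem. In particular $A/\mm^{(T)}$ is semisimple. (Formally, (1) is a special case of (2) with $G$ trivial.)

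\emph{Case (3), $T$ cosemisimple.} Here I would argue that $A/\mm^{(T)}$ is a finite dimensional $T$-simple algebra (Proposition~\ref{Skryab}(1)) which is also a $T$-module algebra over the \emph{cosemisimple} Hopf algebra $T$; the goal is to deduce it is semisimple as an algebra. The natural tool is a Maschke-type argument: cosemisimplicity of $T$ gives a normalised total integral (a $T$-colinear projection onto invariants, or dually an averaging operator), which can be used to split, as $T$-modules, the inclusion of the Jacobson radical $J := \rad(A/\mm^{(T)})$. Since $J$ is automatically a $T$-stable ideal of the $T$-simple algebra $A/\mm^{(T)}$ and $J \neq A/\mm^{(T)}$, $T$-simplicity forces $J = 0$, i.e.\ $A/\mm^{(T)}$ is semisimple. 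I expect the main obstacle to be making the Maschke argument run in this module-algebra setting without extra hypotheses; the cleanest route is probably to cite a known Maschke-type theorem for comodule algebras / module algebras over cosemisimple Hopf algebras (e.g.\ in the style of Montgomery's book or of \cite{MS}), applied to the radical of $A/\mm^{(T)}$, rather than reprove it.

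\emph{Case (4), $T$ involutory with the stated characteristic hypothesis.} The strategy is to reduce to case (3) by showing that, under these hypotheses, an involutory $T$ acting on $A$ can be replaced (as far as the cores $\mm^{(T)}$ are concerned) by a cosemisimple one, or directly that $A/\mm^{(T)}$ is cosemisimple/semisimple. Set $\bar A := A/\mm^{(T)}$, a finite dimensional $T$-simple, $T$-module algebra. The point of the bound $\char k = 0$ or $\char k = p > \dim_k \bar A$ is exactly that it makes the relevant finite dimensional Hopf algebra acting faithfully on $\bar A$ semisimple: more precisely, the image of $T$ in $\End_k(\bar A)$ is a finite dimensional Hopf-algebra quotient $T'$ of $T$ of dimension at most $(\dim_k \bar A)^2$-ish, still involutory since $S^2 = \id$ passes to quotients; and an involutory finite dimensional Hopf algebra whose dimension is not divisible by $\char k$ is semisimple and cosemisimple by Larson--Radford / Larson--Sweedler in the form used in $\S$\ref{involut} (the characteristic-$p$ version requiring $p \nmid \dim$). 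Then case (3) applied to $T'$ gives that $\bar A = A/\mm^{(T)}$ is semisimple. The technical care needed here is in bounding the dimension of the acting quotient $T'$ of $T$ in terms of $\dim_k(A/\mm^{(T)})$ so that the characteristic hypothesis $p > \dim_k(A/\mm^{(T)})$ is strong enough to guarantee $p \nmid \dim_k T'$; I would look to Skryabin's \cite{Sk10} or a direct estimate (the action factors through $T/\Ann_T(\bar A)$, and one controls this annihilator using that $\bar A$ is generated over $k$ by few elements together with $T$-stability). This dimension bookkeeping is the part I expect to be most delicate, and is where I would spend the most effort in writing out the details.
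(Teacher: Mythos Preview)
Cases (1)--(3) and the characteristic~$0$ part of (4) are essentially as in the paper. For (3) the paper simply quotes \cite[Theorem 0.5(ii)]{SvO}: since $A/\mm^{(T)}$ is $T$-simple (Proposition~\ref{Skryab}(1)) and $T$ is cosemisimple, $A/\mm^{(T)}$ is semisimple. Your sketch of a Maschke argument is slightly garbled in the middle (the sentence ``Since $J$ is automatically a $T$-stable ideal\ldots'' is exactly the nontrivial point, not an automatic fact), but your suggested cleanest route, citing a known result, is what the paper does.

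The genuine gap is in case (4) for $\charr k = p > 0$. Your plan is to pass to the image $T'$ of $T$ in $\End_k(\bar A)$, argue $T'$ is still an involutory Hopf algebra with $p \nmid \dim_k T'$, and then invoke Larson--Radford to reduce to case (3). But the kernel of the action map $T \to \End_k(\bar A)$ is in general only an ideal of $T$, not a Hopf ideal, so there is no reason for $T'$ to be a Hopf algebra at all. Even granting that, the dimension bookkeeping you flag as ``most delicate'' does not obviously go through: the hypothesis is $p > \dim_k \bar A$, while the naive bound on $\dim_k T'$ is $(\dim_k \bar A)^2$, and one would need a finer argument to get $p \nmid \dim_k T'$.

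The paper avoids all of this by citing a result of Linchenko \cite[Theorem]{Lin}: if $T$ is involutory and $\dim_k(A/\mm^{(T)}) < p$, then the Jacobson radical of $A/\mm^{(T)}$ is $T$-stable. Combined with the $T$-simplicity of $A/\mm^{(T)}$ from Proposition~\ref{Skryab}(1), this forces the radical to be zero. So the correct route in positive characteristic is to show the radical is $T$-stable directly (via Linchenko), not to try to make the acting Hopf algebra cosemisimple.
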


\begin{proof}
(1) and (2) are clear.

\medskip 

\noindent (3) Let $\mm \in \maxspec(A)$, so that $A/\mm^{(T)}$ is a $T$-module algebra, and is $T$-simple by Proposition \ref{Skryab}(1). Thus $A/\mm^{(T)}$ is semisimple by \cite[Theorem 0.5(ii)]{SvO}.

\medskip
\noindent (4) If $k$ has characteristic 0 this is simply a restatement of (3), since in this case $T$ is involutory if and only if it is cosemisimple if and only if it is semisimple, \cite[2.6]{LR1} and \cite[Theorems 3 and 4]{LR2}. Suppose that $\char k  = p > 0$ and let $\mm \in \maxspec(A)$. If $\dim_k(A/\mm^{(T)}) < p$ then its Jacobson radical is $T$-stable, by \cite[Theorem]{Lin}. By the $T$-simplicity of $A/\mm^{(T)}$ ensured by Proposition \ref{Skryab}(1), this forces $A/\mm^{(T)}$ to be semisimple.
\end{proof}

\begin{Rmk} There is a considerable overlap between cases (2), (3) and (4) of the above result. As noted in the proof, being semisimple, cosemisimple and involutory are equivalent conditions on $T$ when $k$ has characteristic 0. Moreover, when $A$ is a commutative $T$-module domain and $T$ is cosemisimple, Skryabin, in \cite[Theorem 2]{Sk17}, showed that the action factors through a group, extending earlier work of Etingof and Walton \cite{EW}.

\end{Rmk}

\subsection{$\overline{H}$-stability}\label{stabsec} The equivalence relation of Definition \ref{deforbit} was studied by Montgomery and Schneider \cite{MS}, before Skryabin \cite{Sk10}, in the special setting of a faithfully flat $T$-Galois extension $R \subset S$. In fact, the equivalence relation as defined on $\mathrm{Spec}(R)$ in \cite[Definition 2.3(2)]{MS} is different from the one given above, defining instead an ideal $I$ of $R$ to be $T$-stable if $IS = SI$, and then using this to define an equivalence relation. The next lemma examines the relation between these two notions of stability.

\begin{Lem}\label{equivstable}
Let $U$ be a Hopf algebra with bijective antipode and $W$ a normal  Hopf subalgebra of $U$. Write $\overline{U} := U/W^+ U$. Let $I$ be an ideal of $W$.  
\begin{enumerate}
\item If $ad_l(u)(x)\in I$ for all $u \in U, x\in I$, then $UI\subseteq IU$, and $IU$ is an ideal of $U$.
\item If $ad_r(u)(x)\in I$ for all $u \in U, x\in I$, then $IU\subseteq UI$, and $UI$ is an ideal of $U$.
\end{enumerate}
If the extension $W\subseteq U$ is faithfully flat $\overline{U}$-Galois, the following are equivalent:
\begin{enumerate}[label=(\roman*)]
\item $ad_l(u)(x)\in I$, for all $u\in U, x\in I$.
\item $ad_r(u)(x)\in I$, for all $u\in U, x\in I$.
\item $UI=IU$.
\end{enumerate}
\end{Lem}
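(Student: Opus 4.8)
The plan is to first dispense with parts (1) and (2), which are purely formal consequences of the adjoint-action identities, and then to prove the equivalence of (i), (ii), (iii) under the faithfully flat Galois hypothesis. For part (1), suppose $ad_l(u)(x)\in I$ for all $u\in U$, $x\in I$. Given $u\in U$ and $x\in I$, I would write $ux = \sum (ad_l(u_1)(x))u_2 = \sum u_1 x S(u_2) u_3$, using $\sum S(u_2)u_3 = \epsilon(u_2)1$; each term $ad_l(u_1)(x)$ lies in $I$ by hypothesis, so $ux\in IU$, giving $UI\subseteq IU$. The reverse is symmetric once we observe that the hypothesis is equivalent, via bijectivity of $S$, to a statement letting us pull $U$ past $I$ on the other side; alternatively $IU\subseteq UI$ follows from the companion identity $xu = \sum u_1\,(ad_r(u_2)(x)) = \sum u_1 S(u_2) x u_3$ — but for part (1) we only need $UI\subseteq IU$ to conclude $IU$ is a right ideal of $U$ (it is visibly a left ideal since $U(IU)\subseteq (UI)U\subseteq (IU)U = IU$). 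Part (2) is the mirror image, using $ad_r$ and the identity $xu=\sum u_1(ad_r(u_2)(x))$ together with $\sum u_1 S(u_2)=\epsilon$.

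For the equivalence, I would first note that by parts (1) and (2), (i) implies $UI\subseteq IU$ with $IU$ an ideal, and (ii) implies $IU\subseteq UI$ with $UI$ an ideal. So the crux is to show (i) $\Leftrightarrow$ (ii) and that either of them is equivalent to the two-sided equality $UI=IU$. The key tool is that $W = U^{\co\overline{U}}$ (both sides) in a faithfully flat $\overline{U}$-Galois extension, by the standard structure theory (this is exactly the content invoked in the proof of Theorem \ref{properties}(7)). The plan is: given (iii), so $J:=UI=IU$ is a two-sided ideal of $U$; then for $x\in I$ and $u\in U$ we have $ad_l(u)(x)=\sum u_1 x S(u_2)\in UIU = J$, and separately $ad_l(u)(x)\in W$ since $W$ is normal; hence $ad_l(u)(x)\in J\cap W$. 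The remaining point is that $J\cap W = I$ — i.e. $UI\cap W = I$ — which is where faithful flatness does the work: tensoring $0\to I\to W\to W/I\to 0$ with $U$ over $W$ stays exact, identifying $U\otimes_W I$ with $UI\subseteq U$, and then $UI\cap W = I$ follows because $W\hookrightarrow U$ is (faithfully flat hence) pure as a $W$-module, or more directly because $U/UI \cong U\otimes_W (W/I)$ is faithfully flat over $W/I$ and hence $W/I\hookrightarrow U/UI$. This gives (iii) $\Rightarrow$ (i), and symmetrically (iii) $\Rightarrow$ (ii). Combined with parts (1),(2), (i) $\Rightarrow UI\subseteq IU$ and (ii) $\Rightarrow IU\subseteq UI$; to close the loop I would show (i) already forces $IU=UI$: from (i) we also get, applying $S^{-1}$ and the interplay between $ad_l$ and $ad_r$ (namely $ad_r(u)(x) = S^{-1}\!\big(ad_l(S(u))(\,\cdot\,)\big)$-type manipulations, using that $I$ is an ideal of the normal — hence $S$-stable and $S^{-1}$-stable — subalgebra $W$), that (ii) holds as well, whence $UI\subseteq IU\subseteq UI$.

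The main obstacle I anticipate is the identity $UI\cap W = I$, or equivalently the purity of $W$ in $U$: this is precisely where the faithfully flat $\overline{U}$-Galois hypothesis (as opposed to mere flatness or mere module-finiteness) is indispensable, and it should be extracted cleanly from the exactness of $-\otimes_W U$ applied to $0\to I\to W\to W/I\to 0$ together with faithful flatness of $U/UI$ over $W/I$. A secondary, more bookkeeping-level obstacle is the passage between $ad_l$ and $ad_r$: one must be careful that the normality of $W$ (hence $S(W)=S^{-1}(W)=W$) lets us convert an $ad_l$-stability statement about the ideal $I$ of $W$ into the corresponding $ad_r$-statement, using formulas such as $ad_r(u)(x)=\sum S(u_1)xu_2$ and the fact that $S$ restricted to $W$ is a bijection of $W$ carrying $I$ into $I$ precisely when $I$ is an $ad$-stable ideal — here one may need to invoke that an ideal of $W$ which is $ad_l$-stable under all of $U$ is automatically $S(W)$-stable in the relevant sense. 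I would present parts (1) and (2) in full and then organise the equivalence as (iii) $\Rightarrow$ (i), (iii) $\Rightarrow$ (ii), (i) $\Rightarrow$ (iii), (ii) $\Rightarrow$ (iii), citing \cite[Proposition 3.4.3]{Mont} and the faithful-flatness descent facts as needed.
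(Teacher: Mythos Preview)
Your treatment of parts (1) and (2) and of the implication (iii) $\Rightarrow$ (i), (ii) is correct and essentially identical to the paper's: the paper also computes $ad_l(u)(x)\in IU\cap W$ and then invokes faithful flatness to conclude $IU\cap W=I$.

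The genuine gap is in your route (i) $\Rightarrow$ (iii). You propose to deduce (ii) from (i) by antipode manipulations of the type $ad_r(u)(y)=S\big(ad_l(S^{-1}u)(S^{-1}y)\big)$. But this only shows that $S(I)$ is $ad_r$-stable, not $I$ itself; there is no reason in this lemma for $S(I)=I$, and normality of $W$ gives only $S(W)=W$, not $S(I)=I$. So this is not ``bookkeeping'' --- the passage simply fails. The paper sidesteps the issue by citing \cite[Remark 1.2(ii)]{MS} for (i) or (ii) $\Rightarrow$ (iii).

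There is, however, a direct argument that stays entirely within the circle of ideas you already have. Assuming (i), part (1) gives that $IU$ is a \emph{two-sided} ideal of $U$, and your faithful-flatness argument (with $U$ flat as a \emph{left} $W$-module) gives $IU\cap W=I$. Now for $x\in I$ and $u\in U$,
\[
ad_r(u)(x)=\sum S(u_1)xu_2\in U\,I\,U\subseteq IU,
\]
since $IU$ is two-sided, while $ad_r(u)(x)\in W$ by normality of $W$; hence $ad_r(u)(x)\in IU\cap W=I$, which is (ii). Then parts (1) and (2) together give $UI\subseteq IU$ and $IU\subseteq UI$, i.e.\ (iii). So the same descent step you isolated for (iii) $\Rightarrow$ (i) also drives (i) $\Rightarrow$ (ii), and no antipode gymnastics are needed.
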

\begin{proof}
If $I$ is invariant under left adjoint action, then for all $x\in I, u \in U$ we have $ux= \sum u_1xS(u_2)u_3 = \sum ad_l(u_1)(x)u_2\in IU$, proving (1). $(2)$ is proved similarly.

Suppose in addition that the extension $W\subseteq U$ is faithfully flat $\overline{U}$-Galois. This implies that, if (i) or (ii) holds, then $UI=IU$ by \cite[Remark 1.2(ii)]{MS}. For the converse, suppose that $IU = UI$, and let $x \in I$ and $u \in U$. Then
$$ ad_l(u)(x)=\sum u_1xS(u_2)\in IU\cap W, $$ 
since $W$ is normal. But the extension $W  \subseteq U$ being faithfully flat gives $IU \cap W=I$. This proves $(iii)\Longrightarrow(i)$ and the proof is analogous for the right adjoint action.
\end{proof}

Returning to our primary focus, let $H$ again be an affine commutative-by-finite Hopf $k$-algebra, with $k$ algebraically closed and $A$ a commutative normal Hopf subalgebra over which $H$ is a finite module. Let $\overline{H} = H/A^+ H$ and let $\pi$ be the Hopf surjection from $H$ to $\overline{H}$ as in Theorem \ref{properties}. Recall from Theorem \ref{properties}(3) that the adjoint actions of $H$ on $A$ factor over $A^+ H$, so that $A$ is a left and right $\overline{H}$-module algebra.

Using the terminology introduced at Definition \ref{stabledef}(1), the \emph{left} $\overline{H}$-core of an ideal $I$ of $A$ with respect to the left adjoint action will be denoted by ${^{(\overline{H})} I}$ and we will define $I$ as \emph{left} $\overline{H}$-\emph{stable} if it is invariant under left adjoint action. \emph{Right} $\overline{H}$-cores are denoted by $I^{(\overline{H})}$ and \emph{right} $\overline{H}$-stable ideals are defined analogously. When $A$ is semiprime or $H$ is pointed, recall from Theorem \ref{properties}(7) that $A \subseteq H$ is a faithfully flat $\overline{H}$-Galois extension, hence left and right stability of ideals are equivalent by Lemma \ref{equivstable} and we refer to $I$ simply as $\overline{H}$-stable. The non-semiprime case will be dealt with in the next section.

In view of Definition \ref{orbsemi}, we say $A$ is \emph{left} orbitally semisimple if $A/{^{(\overline{H})}\mm}$ is semisimple for every $\mm\in\maxspec(A)$. One similarly obtains a notion of \emph{right} orbital semisimplicity.

\begin{Lem}\label{leftrightorbss}
Let $H$ be an affine commutative-by-finite Hopf algebra, with commutative normal Hopf subalgebra $A$.
\begin{enumerate}
\item Let $V$ be a subspace of $A$. Then, $$ S\left(^{(\overline{H})}V\right) = S(V)^{(\overline{H})}. $$
\item $A$ is left orbitally semisimple if and only if it is right orbitally semisimple.
\item Let $I$ be an ideal of $A$ such that $S(I)=I$. Then, $I$ is left $\overline{H}$-stable if and only if it is right $\overline{H}$-stable.
\end{enumerate}
\end{Lem}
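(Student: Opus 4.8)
The three parts are closely linked, and I would derive (2) and (3) from (1), so the crux is establishing the identity $S\bigl({}^{(\overline{H})}V\bigr) = S(V)^{(\overline{H})}$ for an arbitrary subspace $V$ of $A$. The mechanism is that the antipode $S$ intertwines the left and right adjoint actions of $H$ on $A$: for $h \in H$ and $a \in A$ one has $S\bigl(ad_l(h)(a)\bigr) = ad_r(S(h))\bigl(S(a)\bigr)$, which is a standard identity following from $S(\sum h_1 a S(h_2)) = \sum S(h_2) S(a) S^2(h_1)$ together with the (co)antipode axioms; here $S$ is bijective by Theorem \ref{properties}(5), so $S(H) = H$ and the actions of $h$ and $S(h)$ range over all of $\overline{H}$ as $h$ ranges over $H$. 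Since $S$ is a coalgebra anti-homomorphism, the left adjoint action factors through $\overline{H}$ exactly when the right adjoint action does (both factor over $A^+H$ by Theorem \ref{properties}(3)), so this identity descends to the $\overline{H}$-actions. Granting this, $a \in {}^{(\overline{H})}V$ means $a \in V$ and $ad_l(\overline{H})(a) \subseteq V$; applying $S$ and using the intertwining identity, $S(a) \in S(V)$ and $ad_r(\overline{H})(S(a)) \subseteq S(V)$, i.e. $S(a) \in S(V)^{(\overline{H})}$. Applying the same reasoning to $S^{-1}$ (equivalently, replacing $V$ by $S(V)$ and using $S^2$, or just noting the argument is symmetric) gives the reverse inclusion, proving (1).

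For (3), suppose $I$ is an ideal of $A$ with $S(I) = I$. If $I$ is left $\overline{H}$-stable then ${}^{(\overline{H})}I = I$, so by (1), $I^{(\overline{H})} = S\bigl(S^{-1}(I)^{(\overline{H})}\bigr)$... more directly: $S(I)^{(\overline{H})} = S\bigl({}^{(\overline{H})}I\bigr) = S(I)$, and since $S(I) = I$ this reads $I^{(\overline{H})} = I$, i.e. $I$ is right $\overline{H}$-stable. The converse is identical with the roles of left and right interchanged (using that (1), being symmetric under $S \leftrightarrow S^{-1}$, also gives $S^{-1}\bigl(V^{(\overline{H})}\bigr) = {}^{(\overline{H})}S^{-1}(V)$). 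Note this is exactly what is needed to justify Remark \ref{normal}: the argument shows one-sided normality forces the other side, since a Hopf subalgebra $A$ is automatically $S$-stable.

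For (2), take any $\mm \in \maxspec(A)$. Since $S$ restricts to an algebra automorphism of the commutative algebra $A$ (being bijective and, on $A$, an anti-homomorphism hence a homomorphism), $S(\mm) =: \mm'$ is again a maximal ideal of $A$, and $S$ gives a bijection of $\maxspec(A)$. By part (1), $S\bigl({}^{(\overline{H})}\mm\bigr) = S(\mm)^{(\overline{H})} = \mm'^{(\overline{H})}$, and $S$ induces a $k$-algebra isomorphism $A/{}^{(\overline{H})}\mm \cong A/\mm'^{(\overline{H})}$. Hence $A/{}^{(\overline{H})}\mm$ is semisimple if and only if $A/\mm'^{(\overline{H})}$ is; as $\mm$ ranges over $\maxspec(A)$ so does $\mm'$, so every left core quotient is semisimple if and only if every right core quotient is, which is the assertion.

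The main obstacle is purely the bookkeeping in verifying the intertwining identity $S \circ ad_l(h) = ad_r(S(h)) \circ S$ at the level of the $\overline{H}$-actions — one must be careful that $S$ is only an \emph{anti}-automorphism of $H$ (though an honest automorphism of the commutative $A$), and that bijectivity of $S$ is what makes the correspondence $h \leftrightarrow S(h)$ exhaust $\overline{H}$ in both directions; everything else is formal once (1) is in hand.
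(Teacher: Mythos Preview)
Your proof is correct and follows essentially the same route as the paper: both hinge on the intertwining identity between $S$ and the two adjoint actions, and then derive (2) and (3) formally from (1). One small slip: in your displayed computation you write $S(\sum h_1 a S(h_2)) = \sum S(h_2) S(a) S^2(h_1)$, but since $S$ is an anti-homomorphism the correct expansion is $\sum S^2(h_2)\, S(a)\, S(h_1)$; this does not affect the argument, and indeed matches the paper's identity $ad_r(h)(Sv) = S\bigl(ad_l(S^{-1}h)(v)\bigr)$ after the substitution $h \mapsto S(h)$.
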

\begin{proof}
(1) Let $v\in V$ and $h\in H$. Then,
\begin{eqnarray*}
ad_r(h)(Sv) &=& \sum S(h_1)S(v)h_2 = S\left(\sum S^{-1}(h_2) v h_1 \right) \\
&=& S\left( \sum S^{-1}(h)_1 v S(S^{-1}(h)_2) \right) = S(ad_l(S^{-1}h)(v)).
\end{eqnarray*}
The statement now easily follows from this.

\medskip

\noindent (2) By (1), $S$ induces an isomorphism between $A/{^{(\overline{H})}\mm}$ and $A/S(\mm)^{(\overline{H})}$. Considering $S$ acts on $\maxspec(A)$, the statement follows.

\medskip
\noindent (3) Suppose $S(I)=I$. If $I$ is left $\overline{H}$-stable, then $I^{(\overline{H})} = S(^{(\overline{H})} I) = S(I) = I$. Conversely, if $I$ is right $\overline{H}$-stable, ${^{(\overline{H})}}I = S^{-1}(I^{(\overline{H})}) = S^{-1}(I) = I$.
\end{proof}

Given part (2) of the previous lemma, the adjectives left and right will be omitted from orbital semisimplicity.

In view of Example \ref{Taft} it seems likely that not all affine commutative-by-finite Hopf algebras are orbitally semisimple, but we know of no example at present. All the affine commutative-by-finite Hopf algebras described in $\S$\ref{Examples} satisfy orbital semisimplicity. The Hopf algebras in $\S$\ref{envposchar}, $\S$\ref{quant} and the family (IV) in $\S$\ref{GKdim1} are orbitally semisimple by Theorem \ref{orbhold}(1), since their corresponding normal commutative Hopf subalgebras $A$ are central. The group algebras in $\S$\ref{abfingroup} (including families (II) in $\S$\ref{GKdim1} and (I) in $\S$\ref{GKdim2}) are of course special cases of Theorem \ref{orbhold}(2); and one may check directly that orbital semisimplicity for the families (III),(V) in $\S$\ref{GKdim1} and (III),(IV) in $\S$\ref{GKdim2} follows from Theorem \ref{orbhold}(2).

\section{Prime and semiprime commutative-by-finite Hopf algebras}\label{prime}

We study here the primeness and semiprimeness of commutative-by-finite Hopf algebras, starting with the classical commutative case in $\S$\ref{nil} and moving on to the general case in $\S$\ref{prime2}. The proof of the following well-known lemma is omitted.
\begin{Lem}\label{powers}
Let $I$ be a Hopf ideal of the Hopf algebra $H$. Then $\bigcap_{n=1}^{\infty} I^n$ is a Hopf ideal.
\end{Lem}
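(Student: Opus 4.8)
The plan is to show that $J := \bigcap_{n=1}^\infty I^n$ is simultaneously an ideal, a coideal, and closed under the antipode. That $J$ is an ideal is immediate since each $I^n$ is an ideal and the intersection of ideals is an ideal; likewise, if each $I^n$ is stable under $S$ then so is $J$. Since $I$ is a Hopf ideal, $S(I)\subseteq I$, and since $S$ is an algebra anti-homomorphism $S(I^n)\subseteq I^n$ for every $n$, giving $S(J)\subseteq J$. So the only substantive point is that $J$ is a coideal, i.e. $\epsilon(J)=0$ and $\Delta(J)\subseteq J\otimes H + H\otimes J$. The first is trivial because $\epsilon(I)=0$ already forces $\epsilon(J)=0$.

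The heart of the matter is therefore the comultiplication condition, and the key step is the observation that since $I$ is a coideal, $\Delta(I)\subseteq I\otimes H + H\otimes I$, and hence by iterating (using that $\Delta$ is an algebra map) one gets
\[
\Delta(I^n)\;\subseteq\;\sum_{i+j=n} I^i\otimes I^j
\]
for every $n\geq 1$, with the convention $I^0=H$. The plan is to prove this containment by induction on $n$: the base case $n=1$ is the coideal property of $I$, and for the inductive step one multiplies $\Delta(I^{n})\subseteq \sum_{i+j=n} I^i\otimes I^j$ by $\Delta(I)\subseteq I\otimes H+H\otimes I$ inside $H\otimes H$, noting that the product of $I^i\otimes I^j$ with $I\otimes H$ lies in $I^{i+1}\otimes I^j$ and with $H\otimes I$ lies in $I^i\otimes I^{j+1}$, so every resulting term is of the form $I^{i'}\otimes I^{j'}$ with $i'+j'=n+1$.

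Granting this, one finishes as follows. Fix $x\in J$; then $x\in I^{2n}$ for every $n$, so $\Delta(x)\in\sum_{i+j=2n} I^i\otimes I^j$. In each summand $i+j=2n$ we have $i\geq n$ or $j\geq n$, hence $\sum_{i+j=2n} I^i\otimes I^j\subseteq I^n\otimes H + H\otimes I^n$. Thus $\Delta(x)\in I^n\otimes H + H\otimes I^n$ for all $n$. It remains to pass to the limit and conclude $\Delta(x)\in J\otimes H + H\otimes J$. This last deduction is the genuine technical obstacle, because an intersection over $n$ of the subspaces $I^n\otimes H + H\otimes I^n$ need not a priori equal $J\otimes H + H\otimes J$ for an arbitrary filtration. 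The standard way around this is to choose a vector-space complement and use a finiteness argument: write $H = I^N \oplus C$ for a suitable $N$ is not directly available, but one can argue that for fixed $x$, $\Delta(x)$ lies in a finite-dimensional subspace $F\otimes F'$ of $H\otimes H$, and then use that on finite-dimensional spaces the relevant intersection does stabilize — more precisely, by the Artin–Rees-type behaviour, or simply by noting $\bigcap_n\big(I^n\otimes H + H\otimes I^n\big) = J\otimes H + H\otimes J$ when one restricts to a finite-dimensional space where the descending chains $I^n\cap F$ stabilize and one can pick complements. Concretely: choose a subspace $E$ of $H$ with $H = J\oplus E$; then $I^n\otimes H + H\otimes I^n$, intersected with the finite-dimensional space containing $\Delta(x)$, is eventually inside $J\otimes H + H\otimes J$ once $n$ is large enough that $I^n$ meets the relevant finite-dimensional part of $E$ in $0$. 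Spelling this out carefully — i.e. justifying that $\Delta(x)\in\bigcap_n (I^n\otimes H+H\otimes I^n)$ forces $\Delta(x)\in J\otimes H+H\otimes J$ — is the one place where the argument needs real care rather than formal manipulation; I expect this to be the main obstacle, and it is presumably why the authors call the lemma ``well-known'' and omit the proof.
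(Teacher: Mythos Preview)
The paper omits the proof entirely, calling the lemma ``well-known''. Your approach is the standard one and is correct; the only issue is that you flag the final step as a ``genuine technical obstacle'' and then give a rather hand-wavy justification involving Artin--Rees and stabilisation. In fact this step is pure linear algebra over a field and requires no such machinery.

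Here is the clean way to finish. You have shown $\Delta(x)\in I^n\otimes H + H\otimes I^n$ for all $n$, and you want $\Delta(x)\in J\otimes H + H\otimes J$. Choose a vector-space complement $E$ to $J$ in $H$, so $H=J\oplus E$, and set $V_n:=I^n\cap E$; then $I^n=J\oplus V_n$ and $\bigcap_n V_n=0$. Under the surjection $H\otimes H\twoheadrightarrow (H/J)\otimes(H/J)\cong E\otimes E$, whose kernel is exactly $J\otimes H+H\otimes J$, the subspace $I^n\otimes H+H\otimes I^n$ maps onto $V_n\otimes E+E\otimes V_n$. So it suffices to show
\[
\bigcap_{n\geq 1}\bigl(V_n\otimes E + E\otimes V_n\bigr)=0.
\]
Take $y$ in this intersection and write $y=\sum_{i=1}^r a_i\otimes b_i$ with the $b_i$ linearly independent. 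Since $\operatorname{span}\{b_i\}$ is finite-dimensional and meets $\bigcap_n V_n=0$, there exists $N$ with $\operatorname{span}\{b_i\}\cap V_n=0$ for all $n\geq N$; hence the images $\bar b_i$ in $E/V_n$ remain linearly independent. But $y\in V_n\otimes E+E\otimes V_n$ means $\sum_i \bar a_i\otimes \bar b_i=0$ in $(E/V_n)\otimes(E/V_n)$, forcing $\bar a_i=0$, i.e.\ $a_i\in V_n$, for every $n\geq N$. Thus each $a_i\in\bigcap_n V_n=0$ and $y=0$.

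So your proof is complete once you replace the speculative final paragraph with this argument; nothing about Artin--Rees or stabilisation on finite-dimensional pieces is needed.
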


\subsection{The nilradical and primeness: commutative case}\label{nil} The proof of the next lemma is included here to facilitate later discussion of noncommutative versions. 

\begin{Lem}\label{commfacts}
Let $A$ be an affine commutative Hopf algebra. 
\begin{enumerate}
\item The nilradical $N(A)$ of $A$ is a Hopf ideal.
\item The left coideal subalgebra $C := A^{\co A/N(A)}$ of $A$ has the following properties:
\begin{enumerate}
\item[(i)] $C$ is a local Frobenius subalgebra of $A$ with $C^+ A \subseteq N(A)$.
\item[(ii)] $A$ is a free $C$-module.
\end{enumerate}
\item There is a unique minimal prime ideal $P$ of $A$ for which $P \subseteq A^+$;
\item $P = \bigcap_n (A^+)^n + N(A)$ is a Hopf ideal.
\end{enumerate}
\noindent Let $T$ be a Hopf algebra such that $A$ is a left $T$-module algebra. Suppose that $N(A)$ is $T$-stable. Then:
\begin{enumerate}
\item[(5)] the prime ideal $P$ is $T$-stable.
\end{enumerate}
\end{Lem}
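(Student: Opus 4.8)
The plan is to prove parts (1)--(5) in order, reducing the harder statements to the structure theory of affine commutative Hopf algebras over an algebraically closed field.

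For (1), I would use the fact that for an affine commutative Hopf algebra $A$ over the algebraically closed field $k$, the nilradical $N(A)$ is precisely the defining ideal of the reduced scheme, and $A/N(A)$ is again a (reduced) commutative Hopf algebra. Concretely, $N(A)$ is a Hopf ideal because it is the kernel of the Hopf surjection $A \to A_{\mathrm{red}} = A/N(A)$; one checks directly that $\Delta(N(A)) \subseteq N(A)\otimes A + A\otimes N(A)$ (since if $a^m = 0$ then $\Delta(a)$ is nilpotent in $A\otimes A$, and nilpotents of $A\otimes A$ lie in $N(A)\otimes A + A \otimes N(A)$ because $A\otimes A$ is again affine commutative with $(A\otimes A)_{\mathrm{red}} = A_{\mathrm{red}}\otimes A_{\mathrm{red}}$, using $k$ algebraically closed so that tensor products of reduced algebras stay reduced), that $\epsilon(N(A)) = 0$, and that $S(N(A)) = N(A)$ since $S$ is an algebra anti-automorphism.

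For (2), write $\overline{A} := A/N(A)$, a reduced affine commutative Hopf algebra, hence (as $k = \overline{k}$) smooth, i.e. $\OO(G)$ for an affine algebraic group $G$. The quotient map $A \to \overline{A}$ exhibits $A$ as a right $\overline{A}$-comodule algebra, and $C := A^{\co\overline{A}}$ is a left coideal subalgebra. This is exactly the setting of the Nichols--Zoeller / faithful-flatness theory for commutative Hopf algebras: since $\overline{A}$ is smooth, the extension $C \subseteq A$ is faithfully flat and $A$ is $\overline{A}$-Galois over $C$; by the Takeuchi/Oberst--Schneider results for commutative Hopf algebras, $A$ is a free $C$-module, giving (ii). For (i): $C$ is the "infinitesimal part" at the identity --- it is local because $\Spec C$ is supported at a single point (the image of the identity of $G$, since modulo $N(A)$ everything collapses), it is finite-dimensional (as $A$ is $\overline{A}$-faithfully flat of finite type), and a finite-dimensional commutative Hopf algebra is Frobenius by \cite[2.1.3]{Mont}; the inclusion $C^+ A \subseteq N(A)$ follows because $C^+ = C \cap N(A)$ (every element of $C^+$ maps to $0$ in $\overline{A}$, and $C \subseteq A$ so it is nilpotent), so $C^+A \subseteq N(A)A = N(A)$.

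For (3) and (4): the minimal primes of $A$ correspond to the irreducible (equivalently connected, since disjoint) components of $\Spec A = \Spec \overline{A} = G$; exactly one of these, say $G^\circ$, contains the identity, and $P := $ the kernel of $A \to \OO(G^\circ) = \OO(G)^\circ_{\mathrm{red}}$ is the unique minimal prime with $P \subseteq A^+$ (the ideal $A^+$ being the augmentation ideal, i.e. the maximal ideal at the identity, which lies on $G^\circ$ and on no other component). For the formula $P = \bigcap_n (A^+)^n + N(A)$: modulo $N(A)$ this says $\overline{P} = \bigcap_n (\overline{A}^+)^n$ in $\overline{A} = \OO(G)$, which is the standard identification of the identity component's defining ideal with the set of functions vanishing to infinite order at the identity point; and $P$ is a Hopf ideal because $G^\circ$ is a normal (even characteristic) closed subgroup of $G$, or alternatively directly from Lemma \ref{powers} applied to the Hopf ideal $A^+$ together with the fact that $N(A)$ is a Hopf ideal by (1) and the sum of two Hopf ideals is a Hopf ideal.

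For (5), assume $A$ is a left $T$-module algebra with $N(A)$ $T$-stable. Then $T$ acts on $\overline{A} = A/N(A)$, hence acts on $\overline{A}^+ = A^+/N(A)$ (because the $T$-action on a module algebra preserves the augmentation ideal: $t \cdot a - \epsilon(t)a$ always lies in the augmentation ideal when $\epsilon$ is $T$-linear, which holds for module algebras), hence preserves each power $(\overline{A}^+)^n$ and therefore $\bigcap_n (\overline{A}^+)^n = \overline{P}$; pulling back along the $T$-equivariant surjection $A \to \overline{A}$ shows $P = \pi^{-1}(\overline{P})$ is $T$-stable.

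The main obstacle is part (2)(i)--(ii): one must correctly invoke the freeness/faithful-flatness of $A$ over the coideal subalgebra $C = A^{\co \overline{A}}$ and justify that $C$ is local Frobenius. This is where the commutative structure theory (Oberst--Schneider, Takeuchi) is doing real work, and one should cite it carefully rather than reprove it; the remaining steps are essentially bookkeeping with the correspondence between Hopf-algebraic and scheme-theoretic language.
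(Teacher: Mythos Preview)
Your treatments of (1), (3), (4) and (5) are essentially the paper's, with only cosmetic differences (for (1) the paper argues $\Delta(N(A)) \subseteq N(A)\otimes A + A\otimes N(A)$ via the Nullstellensatz and one-dimensionality of irreducibles rather than via reducedness of $A_{\mathrm{red}}\otimes A_{\mathrm{red}}$; for (4) the paper first establishes the formula $P = \bigcap_n (A^+)^n$ in the reduced case via Krull's Intersection Theorem and then quotes Lemma~\ref{powers}, exactly as you suggest).

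The genuine gap is in (2)(i). You assert that $C$ is Frobenius because ``a finite-dimensional commutative Hopf algebra is Frobenius''. But $C$ is \emph{not} a Hopf subalgebra of $A$ in general --- it is only a left coideal subalgebra. The paper gives an explicit counterexample in the remark immediately following this lemma: in characteristic $p$, take $A = k[x,y^{\pm 1}]/(x^{p^n})$ with $x$ $(1,y)$-primitive and $y$ group-like; then $C = k\langle \overline{x}\rangle$, which is not closed under $\Delta$. So invoking \cite[2.1.3]{Mont} is illegitimate here.

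The paper's route to the Frobenius property is indirect and uses a different idea: the affine commutative Hopf algebra $A$ is Gorenstein, so a finite injective resolution of $A$ over itself, restricted to $C$, shows that the free $C$-module $A$ has finite $C$-injective dimension; hence $\mathrm{inj.dim}_C\,C < \infty$. A commutative artinian local ring of finite self-injective dimension is self-injective, hence quasi-Frobenius, hence (being commutative) Frobenius.

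Your handling of finite-dimensionality and freeness in (2) is also too loose. The paper does not quote a blanket ``Takeuchi/Oberst--Schneider'' freeness result; it first cites Masuoka--Wigner \cite{MW} for flatness of a commutative Hopf algebra over any coideal subalgebra, checks faithfulness by hand (the unique simple $C$-module is $k$, and $k\otimes_C A \cong A/C^+A \neq 0$), deduces that $C$ is noetherian from faithful flatness of $A$, then uses that $C^+$ is a finitely generated nilpotent maximal ideal to conclude $\dim_k C < \infty$, and finally invokes ``flat over a finite-dimensional local ring $=$ free'' to get (ii). Each of these steps is short, but none can be skipped.
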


\begin{proof} (1) By the Nullstellensatz, $N(A) = \bigcap_V \{ \mathrm{Ann}(V) \}$ as $V$ runs through the irreducible $A$-modules. If $V$ and $W$ are irreducible $A$-modules, then they have dimension 1 since $k$ is algebraically closed, so that $V\otimes W$ is also irreducible. Hence $N(A)(V \otimes W) = 0$ and it follows that $\Delta(N(A)) \subseteq N(A) \otimes A + A \otimes N(A).$ Clearly, $S(N(A)) = N(A)$ as $S$ is an automorphism, and $\epsilon (N(A)) = 0$, since $N(A)$ is nilpotent. %\footnote{This is shown in $\S$2 of ``Connected fully reducible affine group schemes in positive characteristic are abelian'', by M.E. Sweedler, J. Math. Kyoto Univ. 11-1 (1971), 51-70.}

\medskip

\noindent (2) Let $C=A^{\co A/N(A)}$. Clearly this is a left coideal subalgebra of $A$ such that $C^+A\subseteq N(A)$. In particular, $C^+$ is a nilpotent ideal of $C$, hence $C$ is local. A commutative Hopf algebra is flat over its coideal subalgebras by \cite[Theorem 3.4]{MW}, hence $A$ is $C$-flat. Moreover, when checking faithfulness, it suffices to prove $V\otimes_C A\neq 0$ for all simple $C$-modules $V$, \cite[$\S$13.2, Theorem 2]{Water}. Since $C$ is local with unique simple $k$, and $k \otimes_C A \cong A/C^+ A \neq 0$, $A$ is a faithfully flat $C$-module. 

Since $A$ is noetherian, faithful flatness of $A_{C}$ guarantees that $C$ is also noetherian, as a strictly ascending chain of ideals of $C$ would induce a strictly ascending chain of ideals of $A$. In particular, $C^+$ is a finitely generated nilpotent maximal ideal, so that $\dim_k C < \infty$. Since $C$ is finite dimensional local, its only flat modules are free \cite[Corollary 4.3, Theorem 4.44]{Rot}, so $A$ is $C$-free.

Finally, to see that $C$ is Frobenius, note that $A$, being an affine commutative Hopf algebra, is Gorenstein \cite[Proposition 2.3, Step 1]{Br98}. Viewing an injective resolution of $A_{|A}$ as a resolution of $C$-modules by restriction, we see that the free $C$-module $A$ has finite injective dimension. Hence $\mathrm{inj.dim}\, C < \infty$. But commutative Gorenstein finite dimensional rings are self-injective by \cite[Proposition 21.5]{Ei}, thus quasi-Frobenius. Finally, commutative quasi-Frobenius algebras are Frobenius, proving (2).

\medskip

\noindent (3),(4) In view of (1), we may factor by $N(A)$ and so assume that $A$ is semiprime. Hence $A$ has finite global dimension \cite[11.6, 11.7]{Water}, and so is a finite direct sum of domains by \cite[Corollary 10.14]{Ei} and the Chinese Remainder Theorem. In particular, distinct minimal primes of $A$ are comaximal, so there is a unique such prime $P$ contained in $A^+$.

As $P = Ae$ is idempotently generated,  $e = e^n \in (A^+)^n$ for all $n \geq 1$, so that $ P \subseteq (A^+)^n$ 
for all $n$. Since $A/P$ is a commutative affine domain, $\bigcap_n (A^+)^n \subseteq P$ by Krull's Intersection Theorem, \cite[Corollary 5.4]{Ei}. Combining this with the previous inclusion yields 
\begin{equation}\label{charity}  P = \bigcap_{n\geq 1} (A^+)^n.
\end{equation}
But the right hand side of (\ref{charity}) is a Hopf ideal by Lemma \ref{powers}, so (4) is proved. 

\medskip

\noindent (5) Since $N(A)$ is $T$-stable, we can again factor by it in proving the $T$-stability of $P$. Since $\bigcap_n (A^+)^n$ is an intersection of $T$-stable ideals, the result follows from (\ref{charity}).
\end{proof}

\begin{Rmks}\label{group} Three points concerning Lemma \ref{commfacts}: (1) $C$ is not in general a Hopf subalgebra of $A$. For instance, following \cite[Example 2.5.3]{Brion}, let $k$ be a field of characteristic $p>0$ and fix a positive integer $n$. Let $G=(k,+)\rtimes k^\times$, where $k^\times$ acts on $(k,+)$ by multiplication. Its coordinate ring is $\OO(G) = k[x,y^{\pm 1}]$ where $x$ is $(1,y)$-primitive and $y$ is group-like. Let $A$ be the Hopf quotient $\OO(G)/(x^{p^n})$. Its nilradical is $N(A)=\overline{x}A$ and the coinvariants are $C=A^{\co A/N(A)} = k\langle \overline{x} \rangle$.

\noindent (2) Since  $A/N(A)$ is an affine commutative semiprime Hopf $k$-algebra it is the coordinate ring $\mathcal{O}(G)$ of an affine algebraic $k$-group $G$. The ideal $P$ of $A$  is the defining ideal of $G^\circ$, the connected component of the identity of $G$, so $A/P=\OO(G^\circ)$ \cite[$\S$6.7]{Water}.

\noindent (3)  We know of no examples where this inclusion $C^+ A \subseteq N(A)$ of Lemma \ref{commfacts}(2)(i) is strict. The equality $C^+A=N(A)$ is known to hold if the coradical of $A$ is cocommutative (in particular, if $A$ is pointed) by \cite[Theorem]{Mas91}.
\end{Rmks}

\begin{Lem}\label{lemmaB}
Keep the notation from Lemma \ref{commfacts} and  Remark \ref{group}(2). Define $ B:=A^{\co A/P}, $ a left coideal subalgebra of $A$ with $C \subseteq B$, over which $A$ is flat.
\begin{enumerate}
\item $B + N(A)/N(A) =\OO(G/G^\circ)$, the coordinate ring of the \emph{discrete part of $G$};
\item $B + N(A)/N(A)$ is a finite-dimensional semisimple Hopf subalgebra of $A/N(A)$ with $P=B^+A  + N(A)$;
\item $A/N(A)$ is a free $B + N(A)/N(A)$-module.
\item Suppose that $A$ is semiprime or that the coradical of $A$ is cocommutative. Then $B$ is a semilocal (finite dimensional) Frobenius subalgebra of $A$ with $P = B^+ A$, and $A$ is a free $B$-module and $B$ is a free $C$-module.
\end{enumerate}
\end{Lem}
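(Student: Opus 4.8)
The plan is to exploit the chain of left coideal subalgebras $C \subseteq B \subseteq A$ together with faithful flatness and the structure of $A/N(A)$ obtained in Lemma \ref{commfacts}. First I would address (1) and (2) simultaneously by passing to $\overline{A} := A/N(A) = \OO(G)$, where $G$ is an affine algebraic $k$-group and $P/N(A)$ is the defining ideal of $G^\circ$. By the standard dictionary for coideal subalgebras of a commutative Hopf algebra corresponding to quotient group schemes (Takeuchi's correspondence, or \cite[\S16.3]{Water}), the image $\overline{B} := (B+N(A))/N(A)$ of $B$ in $\overline{A}$ is exactly $\overline{A}^{\,\co \overline{A}/(P/N(A))} = \OO(G/G^\circ)$, which gives (1). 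Since $G/G^\circ$ is a finite group scheme and, $G^\circ$ being the connected component, the quotient is étale over $k$ (here $k$ is algebraically closed), $G/G^\circ$ is a finite constant group, so $\OO(G/G^\circ) \cong k^{G/G^\circ}$ is a finite-dimensional semisimple commutative Hopf subalgebra of $\overline{A}$; this is the first half of (2). The identity $P/N(A) = \overline{B}^{+}\overline{A}$ follows because $\OO(G^\circ) = \OO(G)\otimes_{\OO(G/G^\circ)} k$, i.e.\ the defining ideal of $G^\circ$ is generated by $\overline{B}^{+}$; lifting to $A$ gives $P = B^{+}A + N(A)$, completing (2). For (3), $\overline{A}$ is faithfully flat — indeed free — over the finite-dimensional subalgebra $\overline{B}$: flatness is \cite[Theorem 3.4]{MW}, faithfulness is checked on the finitely many simple $\overline{B}$-modules exactly as in the proof of Lemma \ref{commfacts}(2), and a faithfully flat module over a finite-dimensional (hence artinian) ring is projective, hence free over the semilocal ring $\overline{B}$ by \cite[Corollary 4.3, Theorem 4.44]{Rot}.

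For (4), assume $A$ is semiprime or the coradical of $A$ is cocommutative. In the semiprime case $N(A)=0$, so $B=\overline{B}$ and everything in (4) is immediate from (2) and (3), except that $A$ being free over $C$ is Lemma \ref{commfacts}(2)(ii), and then $B$ is free over $C$ because a free module over a finite-dimensional local ring is free over any subalgebra over which it is projective — more directly, $B \cong \overline{B} \cong A^{\co A/P}$ sits between $C$ and $A$, and finite-dimensionality plus $C$ local forces $C$-freeness via \cite[Corollary 4.3, Theorem 4.44]{Rot} once $C$-flatness of $B$ is known, which follows from flatness of $A$ over $C$ and a faithfully-flat descent argument ($A$ is faithfully flat over $B$). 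When the coradical of $A$ is cocommutative, Remark \ref{group}(3) gives $C^{+}A = N(A)$, which lets me transfer information between $A$ and $\overline{A}$: the surjection $A \to \overline{A}$ restricts to a surjection $B \to \overline{B}$ with kernel $B \cap N(A) = B \cap C^{+}A$, and since $A$ is $C$-free one checks $B \cap C^{+}A = C^{+}B$, so $B/C^{+}B \cong \overline{B}$ is finite-dimensional; as $C^{+}$ is nilpotent this forces $\dim_k B < \infty$, and $B$ is semilocal because $\overline{B}$ is a product of copies of $k$ and $B \to \overline{B}$ has nilpotent kernel. Frobenius-ness of $B$ is then obtained exactly as for $C$ in Lemma \ref{commfacts}(2): $A$ restricted to $B$ is a free module of finite injective dimension (restrict a finite injective resolution of $A_{|A}$, using that $A$ is Gorenstein by \cite[Proposition 2.3, Step 1]{Br98}), so $\mathrm{inj.dim}\,B < \infty$, and a commutative finite-dimensional Gorenstein ring is quasi-Frobenius hence Frobenius. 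The equality $P = B^{+}A$ comes from $P = B^{+}A + N(A) = B^{+}A + C^{+}A \subseteq B^{+}A$ since $C \subseteq B$. Finally, $A$ is $B$-free: $A$ is $B$-flat by \cite[Theorem 3.4]{MW}, it is faithfully flat since $B$ is local-ish (semilocal with the relevant base change nonzero, checked as before on simple $B$-modules), and a faithfully flat module over a finite-dimensional ring is projective, hence free over the semilocal $B$; and $B$ is $C$-free by the same three-step argument ($C$-flat by \cite[Theorem 3.4]{MW}, faithfully flat, then free over the finite-dimensional local $C$).

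The main obstacle I expect is the bookkeeping in the non-semiprime half of (4): one must be careful that $B \cap N(A) = C^{+}B$ (not merely $\subseteq$), which is where the hypothesis "$C^{+}A = N(A)$" together with $C$-freeness of $A$ is essential — without $C^{+}A = N(A)$ there is no reason $B$ should be finite-dimensional, and indeed the lemma only claims (4) under that hypothesis. A secondary subtlety is verifying faithful flatness of the various module structures over the semilocal (rather than local) rings $\overline{B}$ and $B$; this is handled uniformly by reducing, via \cite[\S13.2, Theorem 2]{Water}, to nonvanishing of $V \otimes_{B} A$ for each of the finitely many simple $B$-modules $V$, each of which is a quotient of $B/B^{+}$-type, so the relevant base changes are quotients of $A$ by maximal ideals and are visibly nonzero.
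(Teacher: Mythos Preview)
Your overall strategy matches the paper's: reduce to $\overline{A}=\OO(G)$, identify $\overline{B}=\OO(G/G^{\circ})$ via \cite[\S16.3]{Water}, and for (4) use $C^{+}A=N(A)$ to control $B$. The differences and the gap are in the freeness arguments.

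\medskip

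\textbf{Where your chain ``flat $\Rightarrow$ faithfully flat $\Rightarrow$ projective $\Rightarrow$ free'' breaks.}
The references \cite[Corollary 4.3, Theorem 4.44]{Rot} used in Lemma~\ref{commfacts}(2) give that flat modules over a finite-dimensional \emph{local} ring are free; they do not give freeness over a \emph{semilocal} ring. Over $\overline{B}\cong k^{t}$ (with $t>1$) every module is projective, but certainly not every module is free; likewise for $B$ in part (4). So your last implication in (3) and in (4) is not valid as stated. The paper sidesteps this entirely: for (3) it invokes Radford's theorem \cite[Section~2, Theorem~1]{Rad5} that a commutative Hopf algebra is free over any finite-dimensional Hopf subalgebra; for (4) it invokes Masuoka \cite[Theorem~3.5(iii)]{Mas94} that a commutative Hopf algebra is free over any finite-dimensional left coideal subalgebra. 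You should cite one of these rather than attempt the flatness chain.

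\medskip

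\textbf{The step $B\cap C^{+}A=C^{+}B$.}
You flag this yourself, correctly, as the delicate point. Saying ``since $A$ is $C$-free one checks $B\cap C^{+}A=C^{+}B$'' is not enough: $C$-freeness of $A$ does not by itself make $B$ a $C$-summand of $A$, which is what you would need. The paper asserts the equivalent statement $C^{+}B=N(B)$ and then establishes the stronger structural fact $B=\bigoplus_{i=1}^{t}B_{i}$ with each $B_{i}\cong C$; this decomposition simultaneously gives $B$ finite-dimensional, semilocal, $C$-free, and Frobenius (the last by reducing to Lemma~\ref{commfacts}(2)(i)). Your alternative route to Frobeniusness via the Gorenstein argument is fine in principle, but it presupposes $A$ free over $B$, which as noted above needs \cite{Mas94} rather than your flatness chain.

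\medskip

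In summary: the architecture is right, but replace the semilocal-freeness steps by the Radford/Masuoka citations, and either justify $B\cap N(A)=C^{+}B$ properly or adopt the paper's decomposition $B\cong C^{t}$.
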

\begin{proof} Flatness of $A$ over $B$ follows from \cite[Theorem 3.4]{MW}. (1) is a basic part of the theory of algebraic groups, see for example \cite[$\S$16.3, Lemma 2]{Water}, and (2) is immediate from (1). Since $G^\circ$ is a normal subgroup of $G$ \cite[$\S$6.7]{Water}, $P/N(A)$ is a conormal Hopf ideal of $A/N(A)$ and $B^+A + N(A)=P$ by \cite[$\S$5]{Tak}. A commutative Hopf algebra is free over its finite dimensional Hopf subalgebras \cite[Section 2, Theorem 1]{Rad5}, proving (3).

\medskip
\noindent (4) Suppose that the coradical of $A$ is cocommutative. Then $C^+ A = N(A)$ by Remark \ref{group}(3). Thus 
$$ C^+ B = N(B) = N(A) \cap B,$$
and so $B/C^+ B$ is semisimple by (2). Since $C^+B$ is a nilpotent finitely generated ideal of $B$, it follows that $B$ is finite dimensional. A commutative Hopf algebra is free over any finite-dimensional left (or right) coideal subalgebra \cite[Theorem 3.5(iii)]{Mas94}, which proves $A$ is $B$-free. In the semiprime case this is (2) and (3).

Moreover, $B = \oplus_{i=1}^t B_i$, where each $B_i \cong C$ is a finite dimensional commutative local algebra. By Lemma \ref{commfacts}(2), this proves $B$ is Frobenius, semilocal and a free $C$-module.

% As $A$ is a flat $B$-module, if $B \neq A$ then $A$ is a flat and therefore free $B_i$-module of countably infinite rank, for each $i$. (To see that the rank is constant across $i = 1, \ldots , t$, observe that the GK-homogeneity of $A$, Theorem \ref{injthm}(3) applied to $A$, shows that for each maximal ideal $\mm$ of $B$, $\mathrm{dim}_k(A/\mm A)$ is infinite, since $\mm A$ has non-zero annihilator in $A$.) Hence $A$ is a free $B$-module.\footnote{Miguel: I think the final sentence here is more or less ``obvious'' from the preceding one, but it might be as well to check what exactly Irving Kaplansky does in his paper "Big projective modules are free", which probably has a lemma or theorem which covers this sort of situation....}
\end{proof}

\begin{Rmk}\label{copout} Presumably (4) is true without the additional hypothesis on the coradical, but we have not been able to prove this. It is not true in general that $B$ is a Hopf subalgebra of $A$, as is shown by the example in Remark \ref{group}(1), in which $C = B$.
\end{Rmk}

\subsection{The nilradical and primeness: commutative-by-finite case}\label{prime2} We carry the observations and notation of the previous subsection into the next result, which provides parallel results for commutative-by-finite Hopf algebras. These are in part motivated by speculations of Lu, Wu and Zhang \cite[$\S$6, Theorem 6.5, Remark 6.6]{LWZ}, proposing that an exact sequence of Hopf algebras similar to that recalled in Lemma \ref{lemmaB} for the coordinate ring $A$ of an affine algebraic group $G$, namely  
$$0 \longrightarrow B = A^{\co A/P} = \mathcal{O}(G/G^{\circ}) \longrightarrow A = \mathcal{O}(G) \longrightarrow A/P = \mathcal{O}(G^{\circ}) \longrightarrow 0, $$
should be valid more widely. They in fact prove a partial version of this suggestion for noetherian affine regular Hopf $k$-algebras of GK-dimension 1, with $k$ of arbitrary characteristic and not necessarily algebraically closed, \cite[Theorem 6.5]{LWZ}.

\begin{Prop}\label{Hprime}
Let $H$ be an affine commutative-by-finite Hopf algebra, with commutative normal Hopf subalgebra $A$. Let $P$ and $C$, $G$ and $B$ be as in Lemma \ref{commfacts}, Remark \ref{group} and Lemma \ref{lemmaB} respectively.
\begin{enumerate}
\item $N(A)$ is left $\overline{H}$-stable if and only if it is right $\overline{H}$-stable.
\end{enumerate}
Assume for the rest of the proposition that $N(A)$ is $\overline{H}$-stable.
\begin{enumerate}\setcounter{enumi}{1}
\item The minimal prime ideal $P$ of $A$ is an $\overline{H}$-stable Hopf ideal, so $N(A)H$ and $PH$ are Hopf ideals of $H$. Moreover, \begin{equation}\label{nilradint} N(A)H\cap A \; = \; N(A) \; = \; N(H) \cap A,
\end{equation}
where $N(H)$ denotes the nilradical of $H$, and $$ PH\cap A = P. $$

\item $C$ is invariant under the left adjoint action of $H$.

\item Let $Q_1, \ldots , Q_r$ be the minimal prime ideals of $H$ which are contained in $H^+$, so $r \geq 1$, with $r = 1$ if $H/N(A)H$ is smooth. For all $i = 1, \ldots , r$, 
$$ Q_i \cap A \quad = \quad P,$$ and 
$$ N(A)H \subseteq PH \subseteq \bigcap_{i=1}^r Q_i .$$

\item If $H$ is prime, then $A$ is a domain.

\item Assume that either $A$ is semiprime or $H$ is pointed.
\begin{itemize}
\item[(i)] The right coinvariants of the coaction induced by the Hopf surjection $H \twoheadrightarrow H/N(A)H$ are $$ H^{\co H/N(A)H} = A^{\co A/N(A)} = C, $$ a local Frobenius left coideal subalgebra of $A$ over which $H$ is a free module and $C^+A = N(A)$.

\item[(ii)] The right coinvariants induced by the Hopf quotient $\widehat{\pi}:H\to H/PH$ are 
$$ H^{\co H/PH} = A^{\co A/P} = B, $$ 
a semilocal Frobenius left coideal subalgebra of $A$ over which $H$ is a faithfully flat projective module and $B^+H = PH$. Moreover, $B/C^+B = \mathcal{O}(G/G^{\circ})$ is a semisimple normal Hopf subalgebra of $H/N(A)H$ over which $H/N(A)H$ is a free module.
\end{itemize}
\end{enumerate}
\end{Prop}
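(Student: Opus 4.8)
The plan is to push everything, as quickly as possible, down to the semiprime quotient $\widetilde{H}:=H/N(A)H$. Once parts (1) and (2) are in hand, $\widetilde{H}$ will be an affine commutative-by-finite Hopf algebra, finite over the \emph{semiprime} normal commutative Hopf subalgebra $\widetilde{A}:=A/N(A)=\mathcal O(G)$, so that Theorem~\ref{properties}(7) applies to $\widetilde{H}$ through hypothesis (ii); this is the engine behind (4)--(6). Part (1) needs nothing new: $N(A)$ is a Hopf ideal of $A$ by Lemma~\ref{commfacts}(1), hence $S(N(A))=N(A)$, and Lemma~\ref{leftrightorbss}(3) equates its left and right $\overline{H}$-stability. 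For part (2), assume $N(A)$ is $\overline{H}$-stable. Since $S$ is bijective (Theorem~\ref{properties}(5)) and $A$ is normal, Lemma~\ref{equivstable}(1),(2) gives $N(A)H=HN(A)$, and the conditions $\Delta(N(A))\subseteq N(A)\otimes A+A\otimes N(A)$, $\epsilon(N(A))=0$, $S(N(A)H)=S(H)S(N(A))\subseteq HN(A)=N(A)H$ then show $N(A)H$ is a Hopf ideal of $H$; the same reasoning, applied to $P$ (an $\overline{H}$-stable Hopf ideal by Lemmas~\ref{commfacts}(4),(5) and~\ref{leftrightorbss}(3)), handles $PH$. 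As $A$ is noetherian, $N(A)^m=0$ for some $m$, whence $(N(A)H)^m=N(A)^mH=0$ and $N(A)H\subseteq N(H)$; the chain $N(A)\subseteq N(A)H\cap A\subseteq N(H)\cap A\subseteq N(A)$ (the last step because $N(H)\cap A$ is a nil ideal of the commutative ring $A$) gives \eqref{nilradint}. Consequently $\widetilde{A}$ is a normal commutative Hopf subalgebra of $\widetilde{H}$ over which $\widetilde{H}$ is finite, and it is semiprime, so $\widetilde{H}$ is faithfully flat over $\widetilde{A}$ by Theorem~\ref{properties}(7)(a); faithful flatness gives $I\widetilde{H}\cap\widetilde{A}=I$ for every ideal $I$ of $\widetilde{A}$, and taking $I=P/N(A)$ and pulling back along $H\twoheadrightarrow\widetilde{H}$ yields $PH\cap A=P$.

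For part (3) I would record the general fact that for any Hopf surjection $q\colon H\to T$ the right coinvariant subalgebra $H^{\co q}$ is stable under $ad_l(H)$: writing $\rho=(\mathrm{id}\otimes q)\Delta$, the left adjoint action makes $H$ a module coalgebra and $q$ intertwines the two adjoint actions, so $\rho(ad_l(h)(x))$ is the image of $\rho(x)$ under $\sum ad_l(h_1)\otimes ad_l^T(q(h_2))$, which, evaluated at $\rho(x)=x\otimes1$, collapses the second leg through the counit to $ad_l(h)(x)\otimes1$. Applying this with $q\colon H\to\widetilde{H}$, and noting $q|_A$ is the canonical map $A\to A/N(A)$, we get $C=A^{\co A/N(A)}=A\cap H^{\co\widetilde{H}}$, an intersection of two $ad_l(H)$-stable subalgebras ($A$ because normal), hence $ad_l(H)$-stable. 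For parts (4)--(5): since $N(A)H\subseteq N(H)$, the minimal primes of $H$ are exactly those of $\widetilde{H}$ and $\widetilde{H}^+=H^+/N(A)H$, so I work in $\widetilde{H}$, finite and faithfully flat over the semiprime $\widetilde{A}=\mathcal O(G)$. Using the prime correspondence for such extensions (Montgomery--Schneider~\cite{MS}, or Cohen--Seidenberg applied to $\widetilde{H}$ finite over its centre, Corollary~\ref{centrethm}), contraction sends each minimal prime of $\widetilde{H}$ to a minimal prime of $\widetilde{A}$; a minimal prime $Q\subseteq H^+$ contracts into $A\cap H^+=A^+$, and $P$ is the unique minimal prime of $A$ inside $A^+$, so $Q\cap A=P$ and thus $PH\subseteq Q$; also $N(A)H\subseteq PH$. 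At least one minimal prime lies in $H^+$ because $H^+$ is a prime ideal and every prime of a noetherian ring contains a minimal prime; and if $\widetilde{H}$ is smooth, Theorem~\ref{smooththm}(3) writes $\widetilde{H}=\bigoplus_\ell\widetilde{H}_\ell$ with each $\widetilde{H}_\ell$ prime, whose minimal primes are the $\bigoplus_{m\neq\ell}\widetilde{H}_m$, of which exactly one — the one omitting the summand on which $\epsilon$ is non-zero — lies in $\widetilde{H}^+$, so $r=1$. Part (5) is then immediate: if $H$ is prime, $N(H)=0$, so $N(A)=0$ by \eqref{nilradint}, and $(0)$ is the unique minimal prime of $H$, lying in $H^+$, so $(0)=(0)\cap A=P$ and $A=A/P=\mathcal O(G^\circ)$ is a domain.

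For part (6) assume $A$ is semiprime or $H$ is pointed (in the second case $A$ is pointed, so its coradical is cocommutative and Remark~\ref{group}(3), Lemma~\ref{lemmaB}(4) apply; in the first $N(A)=0$ trivialises the $C$-statements). Since $N(A)H$ and $PH$ lie in $A^+H$, factoring $\pi\colon H\to\overline{H}$ through $H/N(A)H$ and through $H/PH$ shows $H^{\co H/N(A)H}\subseteq H^{\co\overline{H}}=A$ and $H^{\co H/PH}\subseteq A$ (Theorem~\ref{properties}(7)(b)); combined with part (3) this gives $H^{\co H/N(A)H}=A^{\co A/N(A)}=C$ and $H^{\co H/PH}=A^{\co A/P}=B$. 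The structural claims about $C$ and $B$ (local, resp. semilocal, Frobenius left coideal subalgebras, $C^+A=N(A)$, $B^+A=P$) are Lemma~\ref{commfacts}(2), Remark~\ref{group}(3) and Lemma~\ref{lemmaB}(4), and then $B^+H=(B^+A)H=PH$. Since $H$ is projective over $A$ (Theorem~\ref{properties}(7)(c)) and $A$ is free over $C$ and over $B$ (Lemmas~\ref{commfacts}(2)(ii),~\ref{lemmaB}(4)), $H$ is projective — hence free, $C$ being finite-dimensional local — over $C$, and faithfully flat projective over $B$. Finally $B/C^+B=(B+N(A))/N(A)=\mathcal O(G/G^\circ)$ is the finite-dimensional semisimple Hopf subalgebra of $\widetilde{H}$ of Lemma~\ref{lemmaB}(2); it is normal in $\widetilde{H}$ by the part-(3) argument applied to $q\colon H\to H/PH$ (left, and symmetrically right, adjoint stability of $B$, descending mod $N(A)H$), and $\widetilde{H}$ is free over it by the freeness of a Hopf algebra over a finite-dimensional Hopf subalgebra. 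The main obstacle is the prime correspondence in part (4): that contraction carries minimal primes of the faithfully flat extension $\widetilde{A}\subseteq\widetilde{H}$ to minimal primes of $\widetilde{A}$ when $\widetilde{A}$ is only normal, not central. A naive argument via the idempotents of $\mathcal O(G)$ cutting out the components of $G$ fails, since those idempotents need not be $\overline{H}$-invariant (the adjoint action is not by algebra maps), so one genuinely needs the Montgomery--Schneider theory for the faithfully flat Galois extension $\widetilde{A}\subseteq\widetilde{H}$, or must route through finiteness of $\widetilde{H}$ over its centre; a milder related point is the freeness (not just projectivity) of $\widetilde{H}$ over $\mathcal O(G/G^\circ)$ in (6)(ii).
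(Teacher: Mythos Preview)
Your treatment of parts (1)--(3), (5) and most of (6) is correct and close in spirit to the paper, though often organised differently. In particular, your abstract argument for (3) --- that $H^{\co q}$ is $ad_l(H)$-stable for any Hopf surjection $q$, then intersecting with the normal subalgebra $A$ --- is valid (it is essentially \cite[Lemma 3.4.2(2)]{Mont}) and is a clean alternative to the paper's direct Sweedler computation. Your route to $PH\cap A=P$ in (2), via faithful flatness of $\widetilde{H}$ over $\widetilde{A}$, is also legitimate and different from the paper, which instead uses that $P$ annihilates a nonzero ideal of $A$.

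The genuine gap is in part (4). You reduce correctly to the semiprime quotient $\widetilde{A}\subseteq\widetilde{H}$ and observe that $Q_i\cap\widetilde{A}\subseteq\widetilde{A}^+$; but your key step, that contraction carries minimal primes of $\widetilde{H}$ to minimal primes of $\widetilde{A}$, is not supplied by either of the tools you invoke. Routing through finiteness over the centre (Corollary~\ref{centrethm}) only tells you $Q_i\cap Z(\widetilde{H})$ is minimal in $Z(\widetilde{H})$, and there is no reason this further contracts to a minimal prime of $\widetilde{A}$ or of $\widetilde{A}^{\overline{H}}$: for a finite extension $R\subseteq S$ of commutative rings, minimal primes of $S$ need \emph{not} contract to minimal primes of $R$ (e.g.\ $R=k[t]\hookrightarrow S=k[t,s]/(s^2-s,st)\cong k[t]\times k$, where the minimal prime $(1-s)$ contracts to $(t)$). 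Nor does the Montgomery--Schneider machinery in \cite{MS} give this directly, since their correspondence is between $\mathrm{Spec}(H)$ and the $T$-\emph{equivalence classes} in $\mathrm{Spec}(A)$, not the individual primes. You yourself flag this as ``the main obstacle'' but do not resolve it. The paper's argument is quite different and avoids the issue altogether: it first shows $P\subseteq Q_i\cap A$ by the elementary observation that $I:=\bigcap_{j\geq 2}P_j$ satisfies $IP=0$, hence $IHP=0\subseteq Q_i$ with $I\not\subseteq Q_i$; then it shows equality by comparing GK-dimensions, using that $\GKdim(H/Q_i)=\GKdim(H)$ for every minimal prime $Q_i$ --- the latter requiring the quasi-Frobenius property of $Q(H)$ from Theorem~\ref{injthm}(7) to produce a nonzero left annihilator of $Q_i$.

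Two smaller points in (6): normality of $B/C^+B$ in $\widetilde{H}$ is obtained in the paper not by symmetrising your left-adjoint argument, but from faithful flatness of $\widetilde{H}$ over the Hopf subalgebra $B/C^+B$ via \cite[Proposition 3.4.3]{Mont}; and the freeness of $\widetilde{H}$ over $B/C^+B$ requires \emph{normality} of the finite-dimensional Hopf subalgebra (as in \cite[Theorem 2.1(2)]{Schn93}), not merely that it is a Hopf subalgebra.
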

\begin{proof} (1) Since $S_{|A}$ is an automorphism, $S(N(A)) = N(A)$. The statement follows from Lemma \ref{leftrightorbss}(3).

\medskip
\noindent (2) By Lemma \ref{commfacts} $P$ is left and right $\overline{H}$-stable, hence $PH$ and $N(A)H$ are ideals by Lemma \ref{equivstable}. Since both $P$ and $N(A)$ are Hopf ideals of $A$ by Lemma \ref{commfacts}, $PH$ and $N(A)H$ are Hopf ideals of $H$. By (1) and Lemma \ref{equivstable}, $N(A)H=HN(A)$. Hence $N(A)H$ is nilpotent and  so $N(A)H \cap A = N(A)$. Moreover, $N(A)H\subseteq N(H)$, and so $N(A) \subseteq N(H) \cap A$. The reverse inclusion is obvious. The minimal prime ideal $P$ is the annihilator of some nonzero ideal $I$ of $A$ \cite[Theorem 86]{Kap}, hence $I(PH\cap A) = 0$ and so $PH\cap A \subseteq P$. The reverse inclusion is clear.

\medskip
\noindent (3) Consider the Hopf surjection $\pi':A\to A/N(A)$ and denote the corresponding right $A/N(A)$-coaction of $A$ by $\rho$. Take $h\in H$ and $c\in C$. Thus, $$ \rho(h\cdot c) = \sum h_1c_1S(h_4)\otimes \pi'(h_2c_2S(h_3)) = \sum h_1c_1S(h_3)\otimes \pi'(h_2\cdot c_2). $$
Since by Lemma \ref{commfacts}(2) $C$ is a left coideal with $C^+\subseteq N(A)$ and $N(A)$ is left $\overline{H}$-stable, we have $\rho(h\cdot c) = \sum h_1c_1S(h_3)\otimes \epsilon(h_2)\epsilon(c_2) = (h\cdot c) \otimes 1$. Therefore, $C$ is invariant under left adjoint action of $H$.

\medskip
\noindent (4) Let $Q_1, \ldots , Q_r$ be as stated, so that clearly $r \geq 1$, with $r = 1 $ if $H/N(A)H$ is smooth since in this case $H/N(A)H$ is a finite direct sum of prime rings by Theorem \ref{smooththm}(3). Everything to be proved concerns objects which contain $N(A)H$ and by (\ref{nilradint}) $N(A)H \cap A = N(A)$. Thus we can factor by the Hopf ideal $N(A)H$ of $H$, and hence assume in proving (4) that $A$ is semiprime. 

Let $P = P_1, \ldots , P_s$ be the minimal primes of $A$, so that $$ I:= \bigcap_{j=2}^s P_j $$ is an ideal of A which is \emph{not} contained in $A^+$ (noting that $A$, being semiprime, is smooth and hence a direct sum of domains). Now $IP=\{0\}$. Hence, by the left $\overline{H}$-stability of $P$ and Lemma \ref{equivstable}, for each $i = 1, \ldots , r$,
\begin{equation} \label{product} IHP  = \{ 0 \} \subseteq Q_i. 
\end{equation}
But $I$ is not contained in $Q_i$ since $Q_i \cap A \subseteq A^+$, so (\ref{product}) implies that 
\begin{equation}\label{inside} P \subseteq Q_i \cap A. 
\end{equation}

On the other hand, comparing GK-dimensions,
\begin{eqnarray*} \GKdim(A) &\geq &  \GKdim(A/P) \geq \GKdim(A/Q_i \cap A)\\
&=& \GKdim(H/Q_i) = \GKdim(H), 
\end{eqnarray*}
using (\ref{inside}) for the second inequality and \cite[Proposition 5.5]{KL} for the first equality. For the final equality, observe first that, since it is a minimal prime,  $Q_i$ survives as a proper ideal $Q(H)Q_i$ in the artinian quotient ring $Q(H)$ of $H$, which exists by Theorem \ref{injthm}(5). Hence the (left, say) annihilator $J$ of $Q_i$ in $H$ is non-zero by Theorem \ref{injthm}(7), since this is true for all proper ideals in a quasi-Frobenius ring, \cite[Proposition XVI.3.1]{St}. Thus, $\GKdim(J) \leq \GKdim(H/Q_i) \leq \GKdim(H)$ by \cite[Proposition 5.1, Lemma 3.1]{KL}, and so the desired equality follows from Theorem \ref{injthm}(3). Since $\GKdim(A) = \GKdim(H)$ by Theorem \ref{properties}(6), equality holds throughout the above chain of inequalities, so that $\GKdim(A/P) = \GKdim(A/Q_i \cap A)$.  Since any proper quotient of the affine domain $A/P$ must have a strictly lower GK-dimension \cite[Proposition 3.15]{KL}, this proves (4).

\medskip

\noindent (5) This is a special case of (4), with $r = 1$ and $Q_1 = \{0\}$, which forces $P = \{0\}$.

\medskip

\noindent (6) Assume $A$ is semiprime or $H$ is pointed. Recall the Hopf epimorphism $\pi:H \to \overline{H}$. By definition, $H^{\co H/N(A)H} \subseteq H^{\co H/PH} \subseteq H^{\co \pi} = A$, the last equality following from Theorem \ref{properties}(7).

\noindent (i) By (\ref{nilradint}) $H^{\co H/N(A)H} = C$. When $H$ is pointed, $C^+A = N(A)$ by \cite[Theorem]{Mas91} (this is trivial when $A$ is semiprime). By Theorem \ref{properties}(7) and Lemma \ref{commfacts}(2) $H$ is $C$-projective and, since $C$ is local, $H$ is $C$-free \cite[Theorem 4.44]{Rot}.

\noindent (ii) By (2), $H^{\co H/PH} = B$. By Theorem \ref{properties}(7) and Lemma \ref{lemmaB}(4) $H$ is a faithfully flat projective $B$-module. By Lemma \ref{lemmaB}(2) $B/C^+B = \mathcal{O}(G/G^{\circ})$ is a finite dimensional semisimple Hopf subalgebra of $A/N(A)$ and $P = B^+ A$. By (2) and Lemma \ref{equivstable}, $B^+H=HB^+$. Since $H/N(A)H$ is faithfully flat over $B/C^+B$, $B/C^+B$ is normal in $H/N(A)H$ by \cite[Proposition 3.4.3]{Mont}. A Hopf algebra is free over any finite-dimensional normal Hopf subalgebra \cite[Theorem 2.1(2)]{Schn93}, thus $H/N(A)H$ is a free $B/C^+B$-module. 
\end{proof}

\begin{Rmks}\label{primermks} (1) An unsatisfactory aspect of Proposition \ref{Hprime} is the need to assume $N(A)$ is $\overline{H}$-stable. It seems unlikely that this will always hold, and indeed the structure of the nilradical $N(H)$ is a delicate question. It is clear that if $A$ is semiprime and $\overline{H}$ is semisimple then $H$ is semiprime (and is even a direct sum of prime algebras by Proposition \ref{gldimcrit}(1) and Theorem \ref{smooththm}(3)), but the converse is easily seen to be false. And even the question as to when a smash product $A\# \overline{H}$ of a commutative $\overline{H}$-module algebra $A$ by a finite dimensional Hopf algebra $\overline{H}$ is semiprime has been the subject of much research and remains currently unresolved - see for example \cite{SvO}. Notice that even if one considers a finite dimensional commutative $T$-module algebra $R$, with $T$ a finite dimensional Hopf algebra,  then $N(R)$ may not be $T$-stable - for instance, consider Example \ref{Taft} and in its notation take $T$ to be the $n^2$-dimensional Taft algebra and $R$ to be $A/\mm_a^{(T)}$ for some $a\in k^\times$.

\medskip

\noindent (2) Even when $N(A)$ is $\overline{H}$-stable, the exact sequence of Hopf algebras 
$$0 \longrightarrow B \longrightarrow H \longrightarrow H/PH \longrightarrow 0 $$
given by Proposition \ref{Hprime} fails to realise the picture proposed by \cite{LWZ} which was discussed before the proposition. This is because $PH$ is in general \emph{not} a prime ideal of $H$. In the notation of the proposition consider, for instance, the trivial case where $A = k$, so $H = \overline{H}$, $P = \{0\}$, $r = 1$ and $Q_1 = H^+$.
\end{Rmks}

Notwithstanding Remark \ref{primermks}(1), there is no problem when $A$ is orbitally semisimple (Definition \ref{orbsemi} and discussion in $\S$\ref{stabsec}).

\begin{Prop}\label{(H)hold}
Let $H$ be an affine commutative-by-finite Hopf algebra with normal commutative Hopf subalgebra $A$. Suppose $A$ is orbitally semisimple. Then, the nilradical $N(A)$ is a left and right $\overline{H}$-stable Hopf ideal of $A$.
\end{Prop}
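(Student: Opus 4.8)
The plan is to show that $N(A)$ is left $\overline{H}$-stable; right stability then follows immediately from Proposition \ref{Hprime}(1), since $S$ restricts to an automorphism of $A$ fixing $N(A)$ setwise, and Lemma \ref{leftrightorbss}(3) applies. Recall that $N(A) = \bigcap \{\mm : \mm \in \maxspec(A)\}$ by the Nullstellensatz (this uses that $k$ is algebraically closed and $A$ is affine). So it suffices to produce, for each $\mm$, a left $\overline{H}$-stable ideal sandwiched between $N(A)$ and $\mm$ whose intersection over all $\mm$ is again $N(A)$; the natural candidate is the left $\overline{H}$-core $^{(\overline{H})}\mm$.

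First I would observe that $N(A) \subseteq \bigcap_{\mm} \,^{(\overline{H})}\mm \subseteq \bigcap_{\mm} \mm = N(A)$, where the first inclusion holds because $N(A)$ is itself a left $\overline{H}$-stable ideal contained in every $\mm$ (it is $\overline{H}$-stable since $\overline{H}$ acts by algebra maps, hence preserves the set of nilpotents), and $^{(\overline{H})}\mm$ is by Lemma \ref{stablecore} the \emph{largest} left $\overline{H}$-stable subspace inside $\mm$, so contains $N(A)$; the last inclusion is trivial. Hence $N(A) = \bigcap_{\mm \in \maxspec(A)} \,^{(\overline{H})}\mm$. Now I invoke the orbital semisimplicity hypothesis: by definition (Definition \ref{orbsemi} in its left-handed form, legitimate by Lemma \ref{leftrightorbss}(2) and the surrounding discussion), $A/\,^{(\overline{H})}\mm$ is semisimple for every $\mm$, and since it is finite-dimensional commutative (Proposition \ref{Skryab}(2)), it is a finite direct product of copies of $k$; in particular $^{(\overline{H})}\mm$ is a \emph{semiprime} ideal of $A$, i.e. an intersection of maximal ideals. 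Therefore $N(A) = \bigcap_\mm \,^{(\overline{H})}\mm$ is an intersection of left $\overline{H}$-stable ideals, hence is itself left $\overline{H}$-stable. Being a $T$-stable subspace which is an ideal, it is a left $\overline{H}$-stable \emph{ideal}; that it is a Hopf ideal of $A$ is Lemma \ref{commfacts}(1), and $S(N(A)) = N(A)$ with Proposition \ref{Hprime}(1) upgrades left stability to two-sided stability.

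I do not anticipate a serious obstacle here: the entire argument is a formal manipulation with cores and the Nullstellensatz, the one substantive input being that orbital semisimplicity forces each core $^{(\overline{H})}\mm$ to be semiprime, which is exactly the content of Definition \ref{orbsemi} combined with Proposition \ref{Skryab}(2). The only point requiring a little care is the bookkeeping between left and right adjoint actions — ensuring that "orbitally semisimple" (stated for the right action in Definition \ref{orbsemi}, or symmetrically for the left) can indeed be used on the left-hand side; but Lemma \ref{leftrightorbss}(2) says left and right orbital semisimplicity coincide, so this is harmless. An alternative, essentially equivalent route would be to argue directly that $\bigcap_\mm \,^{(\overline{H})}\mm$ is semiprime and hence, being contained in every $\mm$ and containing $N(A)$, must equal $N(A)$; but the argument above is cleaner.
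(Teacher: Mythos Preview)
Your proof has a genuine gap, and it is precisely the one the proposition is designed to address. In justifying the inclusion $N(A) \subseteq \bigcap_{\mm} {^{(\overline{H})}\mm}$, you assert that $N(A)$ is left $\overline{H}$-stable ``since $\overline{H}$ acts by algebra maps, hence preserves the set of nilpotents.'' But $\overline{H}$ does \emph{not} act by algebra automorphisms in general: for a Hopf algebra $T$ acting on a module algebra $A$ one has $t\cdot(ab) = \sum (t_1\cdot a)(t_2\cdot b)$, which is an automorphism only when $t$ is group-like. If your claim were correct, the proposition would be trivial and would not require orbital semisimplicity at all --- yet Remark \ref{primermks}(1) explicitly flags the $\overline{H}$-stability of $N(A)$ as a delicate open issue, and the Taft-algebra example shows nilradicals are not preserved by general Hopf actions. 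So this step is circular: you are assuming exactly what you are trying to prove.

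The repair is already sitting in your second paragraph, just in the wrong place. Orbital semisimplicity says each left core $^{(\overline{H})}\mm$ is semiprime; since $N(A)$ is the smallest semiprime ideal, $N(A) \subseteq {^{(\overline{H})}\mm}$ for every $\mm$. Combined with the trivial reverse inclusion $\bigcap_{\mm} {^{(\overline{H})}\mm} \subseteq \bigcap_{\mm}\mm = N(A)$, this gives $N(A) = \bigcap_{\mm} {^{(\overline{H})}\mm}$, an intersection of left $\overline{H}$-stable ideals, hence left $\overline{H}$-stable. This is exactly the paper's argument; it then invokes Lemma \ref{leftrightorbss}(2) (or equivalently your route via Proposition \ref{Hprime}(1)) for right stability. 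So once you delete the circular justification and let orbital semisimplicity do its intended job, your proof coincides with the paper's.
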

\begin{proof} Since $A$ is orbitally semisimple, $$ N(A) = \bigcap_{\mm\in\maxspec(A)} \mm = \bigcap_{\mm\in\maxspec(A)} \mm^{(\overline{H})} = \bigcap_{\mm\in\maxspec(A)} {^{(\overline{H})}\mm} $$ by Lemma \ref{leftrightorbss}(2), and since each ideal $\mm^{(\overline{H})}$ is right $\overline{H}$-stable and each ${^{(\overline{H})}\mm}$ is left $\overline{H}$-stable, $N(A)$ is left and right $\overline{H}$-stable.
\end{proof}

\section{Representation theory: simple modules}\label{rep}

\subsection{Background facts}\label{back} Let $H$ be an affine commutative-by-finite Hopf algebra, and recall that, throughout this paper, the field $k$ is assumed to be algebraically closed. (The results of this section could be recast without this last hypothesis, but they would be significantly more complicated.) Since $H$ is a finite module over its affine centre by Corollary \ref{centrethm}, its simple modules are finite dimensional over $k$, by Kaplansky's theorem \cite[I.13.3]{BrGoodbook}. To be more precise, let $Q_1, \ldots , Q_t$ be the minimal prime ideals of $H$, so every simple $H$-module is annihilated by at least one $Q_i$. Recall Posner's theorem, for example from \cite[I.13.3]{BrGoodbook}, stating that each algebra $H/Q_i$ has a central simple quotient ring $Q(H/Q_i)$, and the \emph{PI-degree} of $H/Q_i$ is defined to be the square root $n_i$ of the dimension of $Q(H/Q_i)$ over its centre. So $n_i$ is a positive integer, and 
$$ \mathrm{max}\{ \mathrm{dim}_k (V) : V \textit{ a simple } H/Q_i\textit{-module}\} = n_i, $$
by \cite[Theorem I.13.5, Lemma III.1.2(2)]{BrGoodbook}. Indeed most simple $H/Q_i$-modules have dimension $n_i$, in that the intersection of the annihilators of these topmost-dimension simple $H/Q_i$-modules is $Q_i$, whereas the intersection of the annihilators of the smaller simple modules strictly contains $Q_i$, \cite[Lemma III.1.2]{BrGoodbook}.

Given the above we define the \emph{representation theoretic PI-degree of} $H$ to be
$$ \mathrm{rep.PI.deg}(H) := \mathrm{max}\{ n_i : 1 \leq i \leq t \}. $$
Thus $ \mathrm{rep.PI.deg}(H) = \mathrm{max}\{\mathrm{dim}_k (V) : V \textit{ a simple } H\textit{-module}\}.$ 

Since the \emph{minimal degree} $\mathrm{min.deg}(R)$ of a ring $R$ satisfying a polynomial identity is by definition the minimal degree of a monic multilinear polynomial satisfied by $R$,
\begin{equation}\label{upper} \mathrm{rep.PI.deg}(H)  \leq \frac{1}{2}\mathrm{min.deg}(H) 
\end{equation}
by \cite[I.13.3]{BrGoodbook}, where this is an equality if $H$ is semiprime, but in general is strict.

\subsection{Bounds on dimensions of simple modules}\label{bounds}

\begin{Thm}\label{bound}
Let $H$ be an affine commutative-by-finite Hopf algebra, finite over the normal commutative Hopf subalgebra $A$, and $V$ a simple left $H$-module. Keep the notation of $\S$\ref{back} and let 
$$  n(V) \; := \; \mathrm{min}\{ n_i : Q_i \cdot V = 0, \, 1 \leq i \leq t \}.$$
Let $d_A(H)$ denote the minimal number of generators of $H$ as an $A$-module.
\begin{enumerate}
\item There exists $\mm\in\maxspec(A)$ such that $\Ann_A(V)=\mm^{(\overline{H})}$.
\item There is an embedding $A/\mm^{(\overline{H})} \hookrightarrow V$ of $A$-modules. Hence,
$$ \mathrm{dim}_k(A/\mm^{(\overline{H})}) \leq \mathrm{dim}_k(V) \leq n(V) \leq \mathrm{rep.PI.deg}(H) \leq \frac{1}{2}\mathrm{min.deg}(H) \leq d_A(H), $$
\end{enumerate}
where the final inequality requires ${_A H}$ to be projective, as ensured by any of hypotheses (i)-(iv) of Theorem \ref{properties}(7).
\end{Thm}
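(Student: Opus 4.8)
The plan is to use the invariant-theoretic results from $\S$\ref{stability}, especially Skryabin's Proposition \ref{Skryab}, together with a dimension count against the known bounds recalled in $\S$\ref{back}. First I would handle part (1). Since $V$ is a simple $H$-module, its annihilator $\Ann_A(V)$ is an ideal of the commutative ring $A$ which is invariant under the $\overline{H}$-action coming from the adjoint action of $H$ on $A$ (because $H$ permutes $A$-submodules of $V$ via the adjoint action, or more precisely because $\Ann_A(V)$ is stable under the action induced by conjugation — this is exactly where normality of $A$ and Theorem \ref{properties}(3) enter). So $\Ann_A(V)$ is an $\overline{H}$-stable ideal of $A$. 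Since $V$ is finite dimensional over $k$ (Kaplansky, via Corollary \ref{centrethm}), $A/\Ann_A(V)$ is a finite-dimensional commutative algebra, hence is a finite product of local rings; picking any maximal ideal $\mm$ of $A$ in the support of $V$, one wants to show $\Ann_A(V) = \mm^{(\overline{H})}$. The inclusion $\mm^{(\overline{H})} \subseteq \Ann_A(V)$ should follow because $\mm^{(\overline{H})}$ is the largest $\overline{H}$-stable ideal inside $\mm$ (Lemma \ref{stablecore}) and $\Ann_A(V)$ is $\overline{H}$-stable and contained in $\mm$; for the reverse inclusion one uses $\overline{H}$-simplicity of $A/\mm^{(\overline{H})}$ from Proposition \ref{Skryab}(1): the image of $\Ann_A(V)$ is a proper $\overline{H}$-stable ideal of the $\overline{H}$-simple algebra $A/\mm^{(\overline{H})}$, hence zero. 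One must be slightly careful that $\Ann_A(V)$ is genuinely contained in some single maximal ideal rather than spread across several — but here the transitivity of the $\overline{H}$-action on the orbit (Proposition \ref{Skryab}(3)) means all maximal ideals above $\mm^{(\overline{H})}$ are accounted for, and $A/\mm^{(\overline{H})}$ being $\overline{H}$-simple forces $\Ann_A(V)$ to coincide with it regardless of which $\mm$ we chose in the orbit.

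For part (2), I would argue that $A/\mm^{(\overline{H})}$ embeds in $V$ as an $A$-module. Since $A/\mm^{(\overline{H})}$ is a Frobenius algebra by Proposition \ref{Skryab}(2), it has a simple socle-like structure; and $A/\Ann_A(V) = A/\mm^{(\overline{H})}$ acts faithfully on $V$. A faithful module over a finite-dimensional Frobenius (hence self-injective) algebra contains a copy of the regular module: indeed, over a self-injective finite-dimensional algebra $R$, faithfulness of a module $M$ forces $R$ to embed in a finite direct sum of copies of $M$, and by a socle-counting or injective-hull argument one extracts a single copy of ${}_R R$ inside $M$ — this is the standard fact that a faithful module over a quasi-Frobenius ring is a generator, combined with the cogenerator property. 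So $A/\mm^{(\overline{H})} \hookrightarrow V$ as left $A$-modules, giving $\dim_k(A/\mm^{(\overline{H})}) \leq \dim_k(V)$.

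The remaining inequalities in the displayed chain are then assembled from material already in the excerpt. The bound $\dim_k(V) \leq n(V)$ holds because $V$ is a simple module over $H/Q_i$ for some $i$ with $Q_i \cdot V = 0$, and $n_i = \mathrm{PI.deg}(H/Q_i)$ is the top dimension of simple $H/Q_i$-modules by \cite[Theorem I.13.5, Lemma III.1.2(2)]{BrGoodbook}; taking the minimum over valid $i$ gives $n(V)$, and $n(V) \leq \mathrm{rep.PI.deg}(H)$ by definition. The inequality $\mathrm{rep.PI.deg}(H) \leq \tfrac12 \mathrm{min.deg}(H)$ is (\ref{upper}). Finally, for $\tfrac12 \mathrm{min.deg}(H) \leq d_A(H)$: when ${}_A H$ is projective (guaranteed by any of Theorem \ref{properties}(7)(i)–(iv)), $H$ embeds as an $A$-module direct summand of $A^{d_A(H)}$, so $H \hookrightarrow M_{d_A(H)}(A_{\mm})$ after localising at a suitable maximal ideal, hence $H$ embeds in $d_A(H) \times d_A(H)$ matrices over a commutative ring and therefore satisfies the standard identity of degree $2\,d_A(H)$, whence $\mathrm{min.deg}(H) \leq 2\,d_A(H)$.

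The main obstacle I expect is part (1), specifically making the identification $\Ann_A(V) = \mm^{(\overline{H})}$ watertight: one has to verify carefully that $\Ann_A(V)$ is $\overline{H}$-stable (which uses that the adjoint action of $H$ on $A$ descends to $\overline{H}$ and that conjugation by $H$ carries $\Ann_A(V)$ to itself since $A$ is normal), and then one must pin down which maximal ideal $\mm$ to use and invoke $\overline{H}$-simplicity of $A/\mm^{(\overline{H})}$ to collapse the proper $\overline{H}$-stable image of $\Ann_A(V)$ to zero — the subtlety being that a priori $A/\Ann_A(V)$ could be supported at several maximal ideals and one needs Proposition \ref{Skryab}(3) to see these all lie in one $\overline{H}$-orbit with common core $\mm^{(\overline{H})}$.
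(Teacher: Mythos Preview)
There is a genuine gap in your argument for part (1): both of the reasons you give establish the \emph{same} inclusion, namely $\Ann_A(V) \subseteq \mm^{(\overline{H})}$, and neither gives the reverse. Concretely, ``$\mm^{(\overline{H})}$ is the largest $\overline{H}$-stable ideal inside $\mm$ and $\Ann_A(V)$ is $\overline{H}$-stable and contained in $\mm$'' yields $\Ann_A(V) \subseteq \mm^{(\overline{H})}$, not $\mm^{(\overline{H})} \subseteq \Ann_A(V)$ as you claim. Your second argument (the image of $\Ann_A(V)$ in the $\overline{H}$-simple algebra $A/\mm^{(\overline{H})}$ is a proper $\overline{H}$-stable ideal, hence zero) is correct but again proves $\Ann_A(V) \subseteq \mm^{(\overline{H})}$. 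So you have two proofs of one direction and none of the other.

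The missing inclusion $\mm^{(\overline{H})} \subseteq \Ann_A(V)$ genuinely requires the $H$-simplicity of $V$, which you never invoke. The paper's argument is the natural one: since $\mm^{(\overline{H})}$ is $\overline{H}$-stable, $H\mm^{(\overline{H})}$ is a two-sided ideal of $H$ (Lemma \ref{equivstable}); the set $\{v \in V : H\mm^{(\overline{H})} v = 0\}$ is then an $H$-submodule of $V$, nonzero because it contains the simple $A$-submodule annihilated by $\mm$, hence equal to $V$ by simplicity. Nothing in your outline --- $\overline{H}$-stability of $\Ann_A(V)$, $\overline{H}$-simplicity of $A/\mm^{(\overline{H})}$, or orbit transitivity --- can substitute for this step, since those are all statements internal to $A$ and do not see the $H$-module structure of $V$.

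The remainder of your proposal is correct and essentially matches the paper. For the embedding $A/\mm^{(\overline{H})} \hookrightarrow V$, the paper embeds $A/\mm^{(\overline{H})}$ diagonally into $V^{\oplus t}$, splits it off by self-injectivity, and then uses Krull--Schmidt together with the fact that the indecomposable summands of the commutative algebra $A/\mm^{(\overline{H})}$ are pairwise non-isomorphic to place each one inside a single copy of $V$; your socle argument for local Frobenius pieces achieves the same thing. The final inequality via projectivity, local freeness of rank at most $d_A(H)$, and Amitsur--Levitzki is exactly the paper's route.
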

\begin{proof}
\noindent (1) As discussed in $\S$\ref{back},
\begin{equation}\label{trap} \mathrm{dim}_k(V) \leq n(V) \leq \mathrm{rep.PI.deg}(H).
\end{equation}
In particular, $\mathrm{dim}_k(V) < \infty$, so that ${_A V}$ contains a simple $A$-submodule $V_0$ with annihilator $\mm\in\maxspec(A)$. Since $H\mm^{(\overline{H})}$ is a 2-sided ideal of $H$, $\lbrace v\in V: (H\mm^{(\overline{H})})v=0 \rbrace$ is a non-zero $H$-submodule of $V$. By simplicity, $(H\mm^{(\overline{H})})V=0$.
In particular, $\mm^{(\overline{H})}\subseteq \Ann_A(V)$. Conversely, $\Ann_A(V)$ is contained in $\mm$ and is easily seen to be $\overline{H}$-stable, so that $\Ann_A (V) \subseteq \mm^{(\overline{H})}$ by Lemma \ref{stablecore}, proving (1).

\medskip

\noindent (2) Let $\lbrace v_1,\ldots,v_t \rbrace$ be a $k$-basis of $V$. Then $A/\mm^{(\overline{H})}$ embeds in $V^{\oplus t}$ via
$$ 
\iota:A \to V^{\oplus t} \; : \;a \mapsto (av_1,\ldots,av_t),$$
because $\ker\iota = \mm^{(\overline{H})}$ by (1). Since $A/\mm^{(\overline{H})}$ is a Frobenius algebra by Proposition \ref{Skryab}(2), it is self-injective. Therefore, $A/\mm^{(\overline{H})}$ is (isomorphic to) a direct summand of the $A/\mm^{(\overline{H})}$-module $V^{\oplus t}$, thanks to its inclusion in $V^{\oplus t}$ via $\iota$. Now $A/\mm^{(\overline{H})}$ is finite-dimensional, hence commutative artinian, so it is a (finite) direct sum of non-isomorphic indecomposable submodules, each of which must be a summand of ${_AV}$. Therefore, $A/\mm^{(\overline{H})}$ embeds in ${_A V}$.

The first four inequalities in the displayed chain now follow from this embedding together with (\ref{trap}) and (\ref{upper}). To prove the final inequality, assume one of the four hypotheses (i)-(iv) of Theorem \ref{properties}(7), so that $H$ is a projective $A$-module by Theorem \ref{properties}(7)(c). Thus, for every maximal ideal $\mathfrak{n}$ of $A$, $A_{\mathfrak{n}} \otimes_A H$ is a free left $A_{\mathfrak{n}}$-module of rank at most $d_A(H)$. Via the right action of $H$, this yields a homomorphism from $H$ to the algebra of $d_A(H) \times d_A (H)$ matrices over $A_{\mathfrak{n}}$. Since the intersection of the kernels of these maps as $\mathfrak{n}$ ranges through $\maxspec(A)$ is $\{0\}$, it follows from the Amitsur-Levitski theorem, \cite[Theorem 13.3.3(iii)]{McRob}, that $\mathrm{min.deg}(H) \leq 2d_A(H )$, as required. 
\end{proof}

Typically one expects the upper bound $d_A (H)$ in the above to be replaceable by $\mathrm{dim}_k (\overline{H})$. This is certainly the case when $A$ is a domain. This and other simplifications yield the following.

\begin{Cor}\label{primePI} Retain the notation of Theorem \ref{bound}. Suppose that $H$ is a prime affine commutative-by-finite Hopf algebra such that $A$ is semiprime. Let $V$ be a simple $H$-module and $\mm$ the annihilator of a simple $A$-submodule of $V$. Then
\begin{equation}\label{manyineq} \mathrm{dim}_k(A/\mm^{(\overline{H})}) \leq \mathrm{dim}_k(V)  \leq \mathrm{rep.PI.deg}(H) = \frac{1}{2}\mathrm{min.deg}(H) \leq  \mathrm{dim}_k (\overline{H}). \end{equation}
\end{Cor}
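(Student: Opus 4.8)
The plan is to derive Corollary \ref{primePI} from Theorem \ref{bound} by exploiting the two extra hypotheses: $H$ is prime, and $A$ is semiprime. First I would invoke Proposition \ref{Hprime}(5): since $H$ is prime (hence $N(H)=0$, so $N(A)=N(H)\cap A=0$ makes the hypothesis ``$N(A)$ is $\overline{H}$-stable'' vacuously satisfied) and $A$ is semiprime, $A$ is in fact a domain. Alternatively, one can note directly that $A$ semiprime is among the hypotheses (ii) of Theorem \ref{properties}(7), so $A\subseteq H$ is faithfully flat $\overline{H}$-Galois and all the conclusions of that part apply; and Proposition \ref{Hprime}(5) then gives that $A$ is a domain. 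Having $A$ a domain is the key structural input.

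Next I would revisit the chain of inequalities in Theorem \ref{bound}(2). The equality $\mathrm{rep.PI.deg}(H) = \tfrac12\mathrm{min.deg}(H)$ is immediate from \eqref{upper}, because $H$ prime is in particular semiprime, and the displayed remark after \eqref{upper} records that equality holds in the semiprime case. So the only genuinely new point is replacing the bound $d_A(H)$ by $\dim_k(\overline{H})$. For this I would argue that when $A$ is a domain, $H$ is a \emph{free} $A$-module of rank exactly $\dim_k(\overline{H})$. Indeed, by Theorem \ref{properties}(7)(c) $H$ is a finitely generated projective $A$-module, and a finitely generated projective module over a commutative domain has well-defined constant rank; tensoring over $A$ with $A/A^+ = k$ gives $H\otimes_A k = \overline{H}$, so the rank is $\dim_k(\overline{H})$. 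Over the domain $A$ (more precisely, after localising at the generic point, i.e. passing to the field of fractions $\mathrm{Frac}(A)$, or using that projective + constant rank over a domain forces the minimal number of generators to equal the rank once we also use that $H/A^+H$ has that dimension), we get $d_A(H)=\dim_k(\overline H)$. Hence the final inequality $\tfrac12\mathrm{min.deg}(H)\le d_A(H)$ of Theorem \ref{bound}(2) becomes $\tfrac12\mathrm{min.deg}(H)\le \dim_k(\overline H)$, which is exactly the last inequality claimed in \eqref{manyineq}.

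Finally I would assemble the statement: part (1) of Theorem \ref{bound} provides the maximal ideal $\mm$ with $\Ann_A(V)=\mm^{(\overline H)}$ and the $A$-module embedding $A/\mm^{(\overline H)}\hookrightarrow V$, giving $\dim_k(A/\mm^{(\overline H)})\le\dim_k(V)$; and the $A$-submodule whose annihilator is $\mm$ can be taken to be the simple $A$-submodule $V_0$ chosen in that proof, so the phrasing ``$\mm$ the annihilator of a simple $A$-submodule of $V$'' matches. Chaining these with $\dim_k(V)\le n(V)\le\mathrm{rep.PI.deg}(H)$ from \eqref{trap}, with the equality from the semiprime case of \eqref{upper}, and with $\tfrac12\mathrm{min.deg}(H)\le\dim_k(\overline H)$ just established, yields \eqref{manyineq}.

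I expect the main obstacle to be the rank argument: one has to be careful that the \emph{minimal number of generators} $d_A(H)$, not merely the generic rank, equals $\dim_k(\overline H)$. The clean way is to observe that a finitely generated projective module $P$ over a commutative Noetherian domain $A$ with $P/A^+P$ of dimension $n$ over $A/A^+=k$ has $d_A(P)\le n$ by Nakayama localised at $A^+$ — wait, that only bounds the generators near the augmentation point, so more honestly one uses that $P$ has constant rank $n$ (projective over a domain) and $d_A(P)\ge\mathrm{rank}=n$ always, while $\le n$ requires that $n$ generators suffice globally; over a domain this does hold because a projective module of constant rank $n$ is locally free of that rank, and one can choose $n$ global generators by a standard argument (e.g. using that $A/A^+H$ already exhibits $n$ elements that generate modulo $A^+$, together with the projectivity to lift, or simply: $\mathrm{rep.PI.deg}(H)=\tfrac12\mathrm{min.deg}(H)$ is itself $\le\dim_k\overline H$ because the faithful representation of $H$ on the rank-$n$ free $\mathrm{Frac}(A)$-module $\mathrm{Frac}(A)\otimes_A H$ already gives $\mathrm{min.deg}(H)\le 2n$ via Amitsur–Levitzki, bypassing $d_A(H)$ entirely). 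I would present this last variant, since it sidesteps the subtlety about minimal generators and uses only that $\mathrm{Frac}(A)\otimes_A H$ is free of rank $\dim_k(\overline H)$ over the field $\mathrm{Frac}(A)$.
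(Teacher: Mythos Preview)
Your proposal is correct and, after your own course-correction, lands on essentially the same argument as the paper: use $A$ semiprime to invoke Proposition \ref{Hprime}(5) and conclude $A$ is a domain; use primeness of $H$ for the equality in \eqref{upper}; and for the final inequality, observe that ${}_AH$ is projective of constant rank $\dim_k(\overline{H})$ (computed at $A^+$ via Nakayama), then pass to $Q(A)$ to embed $H$ in $\dim_k(\overline{H})\times\dim_k(\overline{H})$ matrices and apply Amitsur--Levitzki. Your ``last variant'' is exactly the paper's route, so your instinct to bypass the $d_A(H)$ computation was the right one. One small cleanup: your parenthetical ``$N(A)=N(H)\cap A$'' is circular, since that identity is itself part of Proposition \ref{Hprime}(2) and presupposes $N(A)$ is $\overline{H}$-stable; just say directly that $A$ semiprime means $N(A)=0$, which is trivially stable.
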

\begin{proof} The equality $\mathrm{rep.PI.deg}(H) = \frac{1}{2}\mathrm{min.deg}(H)$ when $H$ is prime follows from \cite[Lemma III.1.2]{BrGoodbook}. Since $A$ is semiprime the hypothesis of Proposition \ref{Hprime} holds, and $A$ is a domain by part (5) of that result. Therefore, ${_A H}$ is projective by Theorem  \ref{properties}(7)(c), so all of Theorem \ref{bound}(2) is valid. Moreover, $_A H$ is a locally free $A$-module of constant rank, and this rank $r$ must be given by
$$ r = \mathrm{rank}_{A_{A^+}}(A_{A^+} \otimes_A H) = \mathrm{dim}_k(\overline{H}),$$
where the equality follows by Nakayama's lemma. Localising further, to the quotient field $Q(A)$ of $A$, we see that $H$ embeds via right multiplication operators in  $r \times r$ matrices over $Q(A)$. Hence $\mathrm{min.deg}(H) \leq 2r$ by Amitsur-Levitski, \cite[Theorem 13.3.3(iii)]{McRob}.
\end{proof}

\section{Commutative-by-(semisimple \& cosemisimple) Hopf algebras}\label{combysemisimple}

\subsection{Preliminaries}\label{coideal} In moving towards a deeper understanding of affine commutative-by-finite Hopf algebras, an obvious strategy is to impose restrictions on the finite dimensional Hopf quotient $\overline{H}$ of such an algebra $H$. Adopting this approach, a natural first class to study are those Hopf $k$-algebras $H$ which are affine and commutative-by-finite with $\overline{H} = H/A^+ H$ semisimple and cosemisimple for some choice of normal commutative Hopf subalgebra $A$. For brevity, we shall write in this case
$$ A \subseteq H \in\mathcal{CSC}(k). $$

Here are two obvious constructions of such Hopf algebras. First, take a coordinate ring $A=\OO(T)$ of an algebraic group $T$ over a field $k$ and a finite group $\Gamma$ whose order is a unit in $k$, with a homomorphism $\alpha$ from $\Gamma$ to $\mathrm{Aut}(T)$. So $\Gamma$ acts on $A$ by $(\gamma\cdot f)(t) = f(\gamma^{-1}(t))$, for $\gamma\in \Gamma, f\in\OO(T), t\in T$, and we can form the smash product $ H=A\# \Gamma.$ This is a Hopf algebra with the given coproduct of $A$ and with $\Gamma$ consisting of group-likes; and clearly $A \subseteq H \in\mathcal{CSC}(k)$. More generally, $k\Gamma$ can be replaced by any semisimple and cosemisimple Hopf algebra $\overline{H}$, with a Hopf algebra homomorphism $\alpha$ from $\overline{H}$ to $k\mathrm{Aut}(T)$. At one extremity $\alpha(\overline{H})$ could be $k 1_{\mathrm{Aut}(T)}$, yielding the tensor product  $H = A \otimes_k \overline{H}.$ A second large collection of examples is provided in $\S$\ref{abfingroup} by those group algebras $H = kG$ where $G$ has a finitely generated abelian normal subgroup $N$ of finite index in $G$, such that $G/N$ has no elements of order $\mathrm{char}\, k$. The main result of this section, Theorem \ref{semi}, suggests that these examples may go some way towards exhausting all the possibilities for such $H$. 

Before proving Theorem \ref{semi} we need to recall a concept from noncommutative ring theory, and a basic result on finite dimensional Hopf algebras \cite{Sk07}, extending \cite{Mas92}. An ideal $I$ of a noetherian ring $R$ is \emph{polycentral} if there are elements $x_1, \ldots , x_t \in I$ with $I = \sum_{i=1}^t x_i R$, such that $x_1$ is in the centre $Z(R)$ of $R$ and, for $j = 2, \ldots , t$, $x_j + \sum_{i=1}^{j-1} x_i R \in Z(R/\sum_{i=1}^{j-1} x_i R)$. Polycentral ideals share many of the properties of ideals of commutative noetherian rings, \cite[Chapter 4, $\S$2]{McRob}, \cite[Chapter 11, $\S$2]{Pa}.

\begin{Prop}\label{Skrysemisimple} {\rm(Skryabin, \cite{Sk07})} Every coideal subalgebra of a finite-dimensional semisimple Hopf $k$-algebra is semisimple.
\end{Prop}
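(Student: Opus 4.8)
The statement to prove is Proposition \ref{Skrysemisimple}: every (left, say) coideal subalgebra $C$ of a finite-dimensional semisimple Hopf $k$-algebra $\overline H$ is semisimple. Since we are only invoking the result from Skryabin \cite{Sk07}, the natural approach is to reconstruct the short classical argument rather than anything elaborate. The plan is to exhibit $\overline H$ as a faithfully flat (equivalently, since everything is finite dimensional, free) left $C$-module, and then to transfer the semisimplicity of $\overline H$ down to $C$ by a trace/averaging argument. Concretely, the two ingredients are: (i) freeness of $\overline H$ over $C$, which is the Nichols--Zoeller theorem together with its extension to coideal subalgebras — this is exactly the tool already used elsewhere in the paper (cf. the appeal to \cite[Theorem 2.1(2)]{Schn93} and to freeness over finite-dimensional coideal subalgebras in Lemma \ref{lemmaB}); and (ii) the observation that a subalgebra over which a semisimple algebra is faithfully flat (projective) is itself semisimple.

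First I would fix a left coideal subalgebra $C \subseteq \overline H$, so $\Delta(C) \subseteq \overline H \otimes C$. By the Nichols--Zoeller freeness theorem for coideal subalgebras of a finite-dimensional Hopf algebra (Skryabin, or Masuoka \cite{Mas92}), $\overline H$ is a free left $C$-module; in particular $\overline H$ is faithfully flat, hence a projective generator, as a left $C$-module. Next I would take an arbitrary $C$-module $M$ and a $C$-submodule $N \subseteq M$; the goal is to split the inclusion. Induce up: $\overline H \otimes_C M$ is an $\overline H$-module, and since $\overline H$ is semisimple the inclusion $\overline H \otimes_C N \hookrightarrow \overline H \otimes_C M$ splits as $\overline H$-modules, hence certainly as $C$-modules. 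Now restrict back down: because $\overline H$ is free (so faithfully flat) over $C$, the unit map $M \to \overline H \otimes_C M$ is a split $C$-module monomorphism — one uses the counit $\epsilon$ of $\overline H$, or equivalently the fact that $k$ is a $C$-module direct summand of $\overline H$ via $\epsilon$, to produce the splitting $\overline H \otimes_C M \to k \otimes_C M = M$. Chasing these splittings shows $N$ is a direct summand of $M$, so every $C$-module is semisimple, i.e. $C$ is a semisimple algebra.

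An alternative, perhaps cleaner, route avoids module-category diagram chasing: since $C$ is finite dimensional, it suffices to show the Jacobson radical $J(C)$ vanishes. One has $J(C)\overline H \subseteq J(\overline H) = 0$ because $\overline H$ is a projective (free) $C$-module, so $J(C)\overline H$ is a nilpotent-acting ideal inside the semisimple ring $\overline H$; and then $J(C) = J(C)\overline H \cap C = 0$ using faithful flatness of $\overline H$ over $C$ (the intersection-with-$C$ identity for faithfully flat extensions, exactly as used repeatedly in $\S\S$\ref{prime}--\ref{rep}). Either way the engine is: freeness of $\overline H$ over $C$ plus semisimplicity of $\overline H$.

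\textbf{Main obstacle.} The one genuinely nontrivial input is the freeness (Nichols--Zoeller-type) statement for \emph{coideal} subalgebras, not just Hopf subalgebras; the original Nichols--Zoeller theorem covers Hopf subalgebras, and the extension to one-sided coideal subalgebras of a finite-dimensional Hopf algebra is due to Skryabin \cite{Sk07} (building on Masuoka \cite{Mas92}, as the text notes). Granting that, everything else is the standard "semisimple descends along a faithfully flat subalgebra extension" argument and is routine. Since the paper is content to cite \cite{Sk07} for the Proposition itself, the honest write-up is simply to record this freeness-plus-averaging outline and defer to \cite{Sk07}; I would not expect to reprove Nichols--Zoeller here.
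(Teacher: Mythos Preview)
Your proposal has a genuine gap: neither of your two descent arguments is valid, because semisimplicity does \emph{not} descend along faithfully flat (or even free) subalgebra inclusions in general. A concrete counterexample: embed $C = k[x]/(x^2)$ in $S = M_2(k)$ as $\bigl(\begin{smallmatrix} a & b \\ 0 & a \end{smallmatrix}\bigr)$. Then $S$ is free of rank $2$ over $C$ on both sides, $S$ is semisimple, but $C$ is not. In your second approach, the step ``$J(C)\overline H \subseteq J(\overline H)$'' is simply false here: $J(C)\,S = x\,S$ is the nonzero right ideal of matrices with vanishing second row. In your first approach, the claimed retraction $\overline H \otimes_C M \to M$ does not come from $\epsilon$: applying $\epsilon$ yields $k \otimes_C M = M/C^+M$, not $M$. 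What would make the averaging argument work is a $(C,C)$-\emph{bimodule} retraction $\overline H \to C$, and one-sided freeness does not supply this --- indeed, no such bimodule splitting exists in the $M_2(k)$ example, else $C$ would be semisimple.

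The paper's proof takes a different route that genuinely uses the coalgebra structure of $C$, not just module-theoretic freeness. It combines two cited facts: Skryabin \cite[Theorem 6.1]{Sk07} shows that every left or right coideal subalgebra of a finite-dimensional Hopf algebra is Frobenius; then Masuoka \cite[Theorem 2.1]{Mas92} shows that, over an algebraically closed field, a coideal subalgebra of a semisimple Hopf algebra is Frobenius if and only if it is semisimple. The Frobenius property is the missing ingredient that your purely ring-theoretic argument cannot reach.
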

\begin{proof}
Every left or right coideal subalgebra of a finite-dimensional Hopf algebra is Frobenius \cite[Theorem 6.1]{Sk07}. Since $k$ is algebraically closed, a coideal subalgebra of a semisimple Hopf algebra is Frobenius if and only if it is semisimple by \cite[Theorem 2.1]{Mas92}.
\end{proof}

Given the length of the statement and proof of Theorem \ref{semi}, the reader may find it helpful to look first at the discussion immediately following its proof, including Corollary \ref{primesemis}. The latter records the simplifications which occur if $A \subseteq H \in \mathcal{CSC}$ and $H$ is \emph{prime}.

\subsection{Structure theorem}\label{structure}

\begin{Thm}\label{semi}
Let $A \subseteq H \in\mathcal{CSC}(k)$ with $\GKdim(H) = n$. Let $N(A)$ denote the nilradical of $A$ and let $P$ be the unique minimal prime ideal of $A$ with $P \subseteq A^+$, as in Proposition \ref{Hprime}.  Let $G^{\circ} \subseteq G$ be the algebraic groups such that $\mathcal{O}(G) = A/N(A)$ and $\mathcal{O}(G^{\circ}) = A/P$, as in Remark \ref{group}.

\begin{enumerate}
\item $N(A)$ and $P$ are (left and right) $\overline{H}$-stable Hopf ideals of $A$, and $ N(A)H = N(H)$ and $PH$ are semiprime Hopf ideals of $H$.

\item $A/N(A) \subseteq H/N(A)H$ and $A/P \subseteq H/PH$ are in $\mathcal{CSC}(k)$, with Gelfand-Kirillov and global dimensions $n$. They are faithfully flat $\overline{H}$-Galois extensions of $A/N(A)$ and $A/P$ respectively.

\item Let $Q_1, \ldots , Q_t$ be the minimal prime ideals of $H$. Precisely one minimal prime of $H$, say $Q_1$, is contained in $H^+$, and this minimal prime contains $PH$. Reorder the remaining $Q_i$ and fix $r$, $1 \leq r \leq t$, so that $P \subseteq Q_i$ if and only if $i \leq r$. Then
$$ N(A)H = \bigcap_{i=1}^t Q_i, \qquad  PH = \bigcap_{i=1}^r Q_i, \qquad Q_j \cap A = P, \quad (1 \leq j \leq r).$$
%and
%$$ Q_j \cap A = P $$
%for $j = 1, \ldots , s,$ with
Moreover
$$ H/N(A)H \cong \bigoplus_{i=1}^t H/Q_i \quad \textit{ and } \quad H/PH \cong \bigoplus_{i=1}^r H/Q_i, $$
direct sums of prime algebras of Gelfand-Kirillov and global dimensions $n$.

\item There are subalgebras 
\begin{equation}\label{chain} C\subseteq B \subseteq A \subseteq D \subseteq H,
\end{equation}
such that:
\begin{enumerate}
\item[(i)] $C:=A^{\co A/N(A)}$ is a local Frobenius left coideal subalgebra of $A$ with $C^+A\subseteq N(A)$, and $A$ is a free left and right $C$-module. Moreover, $C$ is invariant under the left adjoint action of $H$.

\item[(ii)] $B := A^{\co A/P}$ is a left coideal subalgebra of $A$, over which $A$ is flat and such that %$B+N(A)/N(A) = \OO(G/G^\circ)$ is a finite dimensional semisimple Hopf subalgebra of $A/N(A)$, normal in $H/N(A)H$, over which $A/N(A)$ is a free module. Moreover, 
\begin{equation}\label{prune} P = B^+ A + N(A).
\end{equation}

\item[(iii)] There is a factor group algebra $k \Gamma$ of $\overline{H}$ with Hopf epimorphism $\alpha: H \to k\Gamma$, such that the left and right adjoint actions of $\overline{H}$ on $A/P$ both factor through an inner faithful $k\Gamma$-action. Thus, $D := H^{\co k\Gamma}$ is a left coideal subalgebra of $H$ with $H$ left and right faithfully flat over $D$. Moreover, $D$ is invariant under the left adjoint action of $H$, $D^+ H = H D^+ = \ker\alpha$ is a Hopf ideal of $H$ and 
\begin{equation}\label{kGamma} H/D^+ H \cong k \Gamma.
\end{equation}
\end{enumerate}

\item $D/A^+ D $ is semisimple.

\item For all $a \in A$ and $d \in D$, $ ad - da \in PD$, so that 
$$A/P \subseteq Z(D/PD),$$ where $Z(R)$ denotes the centre of the ring $R$.

\item $D/N(A)D$ has global dimension $n$, so it is homologically homogeneous and is a direct sum of prime algebras, each of GK-dimension and global dimension $n$.

\item There is a unique minimal prime ideal $L$ of $D$ with $L \subseteq D^+$. Moreover,
\begin{equation}\label{caught} L \cap A = P
\end{equation}
and \begin{equation}\label{Lformula} L=\bigcap_{i \geq 1} (D^+)^i + N(A)D.
\end{equation}

\item $D/L$ is an affine commutative domain of global dimension $n$.

\item $L$ is a left $H$-stable ideal of $D$, so $LH$ is a Hopf ideal of $H$. Thus, 
\begin{enumerate}
\item[(i)] $D/L$ is a left coideal subalgebra of $H/LH$ and a finite module over the Hopf subalgebra $A/P$ of $H/LH$; 
\item[(ii)] the left adjoint action of $H$ on $D$ induces a left adjoint action of $H/LH$ on $D/L$; and this action factors through an inner faithful group action, for some group $\Lambda$ which maps surjectively onto $\Gamma$.
%\item[(iii)] $D/L$ is preserved under the right adjoint action of $H/LH$ on itself, and this action also factors through $k\Gamma$.
\end{enumerate}

\item There are inclusions $$ N(A)H\subseteq PH \subseteq LH \subseteq Q_1, \quad \textit{ with } \quad Q_1 \cap D = L. $$
%
%and $Q_1 \cap D = L$.

\item Let $E:=H^{\co H/LH}$, a left coideal subalgebra of $H$ with $ B \subseteq E \subseteq D.$ Then $E = D^{\co D/L}$ and it is invariant under the left adjoint action of $H$.

\item Assume further that $H$ is pointed. Then, $H$ is a faithfully flat left and right $E$-module, $Q_1 = LH$ and $H$ and $H/LH$ are crossed products:
%\footnote{It should be possible to show $LH = Q_1$ without assuming $H$ is pointed. First point is that in showing this we can factor by $PH$ (even, by $LH$) and so assume $A$ is a domain and so $H$ is semiprime. Then $Q_1$ is generated by a central idempotent $e$ of $H$, by the decomposition in Thm 7.2(3). If we can show $e \in D^+ H$, then $e \in L$ by (7.17) and we're done. Suppose $e$ is not in $D^+ H$. This means its image in $k\Gamma$ is a non-zero central idempotent, so that - by centrality of $e$ in $H$, presumably then $k\Gamma$ does not act faithfully as an algebra on $D$. With luck this contradicts the fact that $k\Gamma$ acts inner faithfully on $D$ by construction. Is it furthermore true that $L = E^+ D$, so $Q_1 = E^+ H$? }
$$ H\cong A\#_\sigma \overline{H} \quad \textit{ and } \quad H/Q_1 \cong (D/L)\#_\tau \Gamma, $$ 
for cocycles $\sigma$ and $\tau$. 
\end{enumerate}
\end{Thm}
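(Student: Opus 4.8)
The plan is to establish the four assertions of (13) in a convenient order, using throughout that a pointed Hopf algebra, and any of its Hopf quotients, is cleft over the coinvariants of a Hopf quotient.

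First, since $H$ is pointed so is $\overline H$; being also cosemisimple, $\overline H$ equals the group algebra $k\Gamma'$ with $\Gamma'=G(\overline H)$ a finite group whose order is invertible in $k$ by Maschke's theorem, so that $k\Gamma=k(\Gamma'/N')$ for some normal subgroup $N'\trianglelefteq\Gamma'$. The decomposition $H\cong A\#_\sigma\overline H$ is then immediate from Remark~\ref{propremarks}(4) (Schneider, \cite[Cor.~4.3]{Schn92}): $H$ being pointed, the faithfully flat $\overline H$-Galois extension $A\subseteq H$ is cleft. Next, since $L\subseteq D^+$ we get $LH\subseteq D^+H=\ker\alpha$, so $\alpha$ factors through a Hopf surjection $\bar\alpha\colon H/LH\to k\Gamma$; and $H/LH$ is pointed, $LH$ being a Hopf ideal by (10). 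Because $H$ is faithfully flat over $D=H^{\co k\Gamma}$ by (4)(iii), transporting this along the Hopf surjection $H\to H/LH$ — using $D\cap LH=L$, which follows from $Q_1\cap D=L$ and $LH\subseteq Q_1$ in (11) — shows that $H/LH$ is faithfully flat over the image $D/L$ of $D$, that $(H/LH)^{\co k\Gamma}=D/L$, and, after checking $(D/L)^+(H/LH)=\ker\bar\alpha$, that $D/L\subseteq H/LH$ is a faithfully flat $k\Gamma$-Galois extension (by \cite[Prop.~3.4.3]{Mont}).

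For the first assertion: $E=H^{\co H/LH}$ is the subalgebra of coinvariants of the Hopf surjection $H\to H/LH$, and, $H$ being pointed with bijective antipode, $H$ is a faithfully flat left and right $E$-module by \cite[Thm.~3.2]{Tak} (together with \cite[Cor.~2.9]{MW}); by (12), $E=D^{\co D/L}$. For the crossed product over $D/L$: applying \cite[Cor.~4.3]{Schn92} again, now to the pointed Hopf algebra $H/LH$ and its $k\Gamma$-Galois subalgebra $D/L$, that extension is cleft, so $H/LH\cong(D/L)\#_\tau\Gamma$ for some cocycle $\tau$. It then remains to prove $Q_1=LH$; since $LH\subseteq Q_1\subseteq H^+$ with $Q_1$ a minimal prime, this amounts to showing $H/LH$ is prime. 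Now $A/P\subseteq H/LH\in\mathcal{CSC}(k)$: its commutative normal Hopf subalgebra $A/P$ is a domain, hence semiprime, while $\overline{H/LH}$, being a Hopf quotient of the semisimple and cosemisimple $\overline H$, is again semisimple and cosemisimple. So $H/LH$ is smooth by Proposition~\ref{gldimcrit}(1), hence by Theorem~\ref{smooththm}(3),(4) it is a finite direct sum of prime rings, with $Z(H/LH)$ the corresponding direct sum of affine domains; thus $H/LH$ is prime precisely when $Z(H/LH)$ has no nontrivial idempotent. To see this, write $S=H/LH=\bigoplus_{\gamma\in\Gamma}(D/L)u_\gamma$ as the crossed product just obtained. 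Since $D/L$ is a commutative domain, $C_S(D/L)=\bigoplus_{\gamma\in\Gamma_0}(D/L)u_\gamma$ with $\Gamma_0=\ker\bigl(\Gamma\to\Aut(D/L)\bigr)$, and $Z(S)\subseteq Z\bigl(C_S(D/L)\bigr)$. By (10)(ii), the conjugation action of $\Gamma$ on $D/L$, together with the inner-faithful $\Lambda$-action there and the surjection $\Lambda\to\Gamma$, forces $\tau$ to restrict to a non-degenerate cocycle on $\Gamma_0$, so that $C_S(D/L)$ is an Azumaya $D/L$-algebra with centre $D/L$. Hence $Z(S)\subseteq D/L$, a domain, so $Z(S)$ has no nontrivial idempotent, $H/LH$ is prime, $Q_1=LH$, and finally $H/Q_1=H/LH\cong(D/L)\#_\tau\Gamma$ by (11) and the foregoing.

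The main obstacle is exactly the primeness of $H/LH$ in the last step: smoothness alone only yields a direct sum of prime rings, all lying finitely over the irreducible affine base $\maxspec(D/L)$, and collapsing these to a single summand forces one to use both the crossed-product description and the faithfulness assertion of (10)(ii). The delicate point is to show that $\tau$ is non-degenerate on the subgroup $\Gamma_0$ of $\Gamma$ acting trivially on $D/L$ — equivalently, that $Z(H/LH)\subseteq D/L$ — and it is here that the inner faithfulness of the $\Lambda$-action, rather than merely of $\Gamma$, must be brought to bear.
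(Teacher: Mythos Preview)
Your argument for the first three assertions---faithful flatness of $H$ over $E$, and the two crossed-product decompositions---follows essentially the same route as the paper. One minor citation issue: you invoke \cite[Thm.~3.2]{Tak} for faithful flatness of $H$ over $E$, but Takeuchi's result concerns Hopf subalgebras, while $E$ is only a left coideal subalgebra; the paper uses Masuoka \cite{Mas91} here, which is the correct reference for pointed Hopf algebras over coideal subalgebras.

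The real gap is in your proof that $Q_1 = LH$, i.e.\ that $H/LH$ is prime. You introduce $\Gamma_0 = \ker\bigl(\Gamma \to \Aut(D/L)\bigr)$ and then assert that the cocycle $\tau$ restricts to a non-degenerate cocycle on $\Gamma_0$, so that $C_S(D/L)$ is Azumaya with centre $D/L$. You justify this by a vague appeal to the inner-faithful $\Lambda$-action of (10)(ii), and you yourself flag it as ``the delicate point''. This step is not substantiated: nothing in (10)(ii) controls the values of $\tau$ on $\Gamma_0$, and inner faithfulness of $\Lambda$ on $D/L$ is a statement about an action, not about a cocycle on a subgroup of a different group.

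More importantly, the whole detour is unnecessary because $\Gamma_0$ is trivial. Since $H$ is pointed, the cleaving units $u_\gamma$ may be chosen grouplike, so the crossed-product action of $\gamma$ on $d\in D/L$ is $u_\gamma d u_\gamma^{-1} = ad_\ell(u_\gamma)(d)$. Restricted to $A/P\subseteq D/L$ this is precisely the left adjoint $\Gamma$-action on $A/P$, which is inner faithful by (4)(iii); for a group algebra acting on a commutative domain, inner faithful is the same as faithful. Hence $\Gamma$ acts faithfully on $D/L$, and the paper finishes in one line by citing \cite[Corollary 12.6]{Pa2}: a crossed product of a commutative domain by a finite group acting faithfully is prime.
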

\begin{proof} (1),(2) Since $\overline{H}$ is cosemisimple, $A$ is $\overline{H}$-orbitally semisimple by Theorem \ref{orbhold}(3), hence $N(A)$ is $\overline{H}$-stable by Proposition \ref{(H)hold}. In particular, $N(A)H$ is a nilpotent Hopf ideal of $H$ by Proposition \ref{Hprime}(2). It follows from Proposition \ref{gldimcrit}(1) that $H/N(A)H$ is smooth, so that $N(A)H$ is semiprime by Theorem \ref{smooththm}(3). Since $N(A)H$ is nilpotent, it is thus the nilradical of $H$. 

Since $N(A)$ is $\overline{H}$-stable, $P$ is also $\overline{H}$-stable and $PH$ is a Hopf ideal of $H$ by Proposition \ref{Hprime}(2). That $PH$ is semiprime follows for the same reasons as applied to $N(A)H$. The fact that the global and Gelfand-Kirillov dimensions of $H/N(A)H$ and $H/PH$ equal $n$ is a consequence of Theorem \ref{injthm}(3) coupled with Theorem \ref{smooththm}(1). The relevant extensions are faithfully flat $\overline{H}$-Galois by Theorem \ref{properties}(7)(ii)(a).

\medskip

\noindent(3) As already noted, Theorem \ref{smooththm}(3) applies to both $H/N(A)H$ and to $H/PH$, so that both of these algebras are finite direct sums of prime algebras of GK-dimension $n$. Therefore $\GKdim (H/Q_i) = n$ for all the minimal primes $Q_1, \ldots , Q_t$ of $H$, and the displayed intersections and direct sum decompositions are clear. The remainder of (3) follows from Proposition \ref{Hprime}(4).

%In particular, $s \geq 1$.
%Note that $\GKdim (H/Q_i) = \GKdim (A/P) = n$ for all $i$. It follows from this and \cite[Proposition 3.15 and Lemma 4.3]{KL} that, for $i = 1, \ldots , s$, the inclusion $P \subseteq Q_i \cap A$ must be an equality.
%Proposition \ref{Hprime}(4) implies that exactly one of the minimal primes is contained in $H^+$, say $Q_j$. Since $PH = \cap_{i = 1}^s Q_i \subseteq H^+$, we must have $1 \leq j \leq s$. Reordering, we may assume that $j = 1$.
\medskip

\noindent (4)(i) This was proved in Lemma \ref{commfacts}(2) and Proposition \ref{Hprime}(3).

(ii) This was proved in Lemma \ref{lemmaB}. % and Proposition \ref{Hprime}(2). Since $H/N(A)H$ is faithfully flat over $A/N(A)$, the normality of $B + N(A)/N(A)$ in $H/N(A)H$ follows from (\ref{prune}) and \cite[Proposition 3.4.3]{Mont}.

(iii) Since $P$ is left and right $\overline{H}$-stable by Proposition \ref{Hprime}(2), the $\overline{H}$-actions on $A$ restrict to $\overline{H}$-actions on $A/P$. Consider first the \emph{right} adjoint action of $\overline{H}$ on $A/P$. By \cite[Theorem 2]{Sk17} it factors through a group algebra $k \Gamma$, via a Hopf epimorphism $\overline{H}\twoheadrightarrow k\Gamma$, with $k\Gamma$ acting inner faithfully on $A/P$. Let $\alpha$ be the Hopf epimorphism $H\twoheadrightarrow \overline{H}\twoheadrightarrow k\Gamma$ and $D := H^{\co \alpha}$, so that $D$ is a left coideal subalgebra of $H$, and is invariant under the left adjoint action of $H$ by \cite[Lemma 3.4.2(2)]{Mont}. Since $\alpha$ factors through $\pi:H \to \overline{H}$,
$$ A \subseteq H^{\co \pi} \subseteq H^{\co \alpha} = D.$$
By \cite[Corollary 1.5]{MulSchn}, $H$ is left and right faithfully flat over $D$, and $\ker\alpha = D^+ H$, whence $D^+H$ is a Hopf ideal and (\ref{kGamma}) follows. By Koppinen's lemma \cite[Lemma 1.4]{MulSchn} and the fact that $D^+H$ is an ideal of $H$,  $S(D^+ H) = HD^+ \subseteq D^+ H$. Since $S(A^+ H) = H A^+$, $S$ induces a bijection on the finite dimensional space $\overline{H}$, so that $HD^+ = D^+ H$. 

Now repeat the above argument for the \emph{left} adjoint action, yielding an epimorphism of Hopf algebras $\beta: H \twoheadrightarrow \overline{H} \twoheadrightarrow k\Lambda$, for a finite group $\Lambda$ with $k\Lambda$ acting inner faithfully on $A/P$.
We claim that 
\begin{equation}\label{harp}  \ker\alpha = \ker \beta, \quad \textit{ so that } \quad \Gamma = \Lambda.
\end{equation}
Let $h \in \ker\alpha$ and $v \in A/P$. By the proof of Lemma \ref{leftrightorbss}(1),
$$ 0 = ad_r(h)(Sv) = S(ad_{\ell}(S^{-1}h)(v)). $$
Since $S$ is an automorphism of $A/P$, this implies that $ ad_{\ell}(S^{-1}h)(A/P) = 0, $ so that 
$$ S^{-1}(\ker\alpha) \subseteq  \ker\beta. $$
A similar argument yields the reverse inclusion. But $S(D^+H)=D^+H$ as above, hence $\ker\beta=S^{-1}(D^+H)=D^+H=\ker\alpha$, proving (\ref{harp}).
\medskip

\noindent (5) Note that $D/A^+D = D/(A^+H \cap D) \cong (D + A^+H)/A^+ H \subseteq \overline{H}$, where the equality follows by faithful flatness of $H$ over $D$. Therefore (5) is a consequence of Proposition \ref{Skrysemisimple}, since $\overline{H}$ is semisimple.

\medskip

\noindent (6) Since $D^+ \subseteq \ker \alpha $, $D$ acts by $\epsilon$ in the right adjoint action of $H$ on $A/P$.
% Let $d \in D$. Then $\sum d_1\otimes \overline{S(d_2)}=d\otimes \overline{1}$ in $H\otimes k\Gamma$, hence $d-\sum d_1S(d_2)=d-\epsilon(d)\in \ker \alpha$, which acts trivially on $A/P$, proving the claim. 
Thus, for $a\in A, d\in D$, 
$$ ad=\sum d_1S(d_2)ad_3 = \sum d_1\, (ad_r)(d_2)(a) \equiv \sum d_1\epsilon(d_2)a = da \mod PH, $$ 
since $D$ is a left coideal. By (4)(iii), $D \subseteq H$ is faithfully flat, so $PH \cap D = PD$.

\medskip

\noindent (7) Since $H$ is right faithfully $D$-flat, $N(A)H\cap D=N(A)D$. Let $A'=A/N(A), D'=D/N(A)D$ and $H'=H/N(A)H$.

First, $\gldim (D') \leq n$. To see this, note that $\gldim(H') = n$ by (1) and $H'/(D')^+ H'$ is cosemisimple by (\ref{kGamma}), so the inequality follows by \cite[Lemma 9]{Uli}, (in which the key point is that the cosemisimplicity of $H'/(D')^+ H'$ ensures that the left $D'$-module direct sum decomposition $H' = D' \oplus U$ of \cite[Corollary 2.9]{MW} can be achieved as $D'$-bimodules). 

As just explained, $D'$ is a left $D'$-direct summand of $H'$, so $D'$ is a projective $A'$-module by Theorem \ref{properties}(7)(c). Let $V$ be an irreducible left $D'$-module, and suppose that $\prdim_{D'} (V) = t$. Restricting a $D'$-projective resolution of $V$ to $A'$, it follows that $\prdim_{A'} (V) \leq t$. However, $\mathrm{dim}_k (V) < \infty$ and $A'$ is a finite direct sum of commutative affine domains, each of global dimension $n$, by (the commutative case of) Theorem \ref{smooththm}(3). Hence all the irreducible $A'$-modules, and so all the finite dimensional $A'$-modules, have projective dimension $n$. Thus $n \leq t$, so $n = t$ and $D'$ is homologically homogeneous. The direct sum decomposition of $D'$ now follows from \cite[Theorem 5.3]{BrH2}.

\medskip

\noindent (8) There is a unique minimal prime ideal $L$ of $D$ with $L \subseteq D^+$ by the decomposition of $D/N(A)D$ into a direct sum of prime rings in (7). If $P = N(A)$ then clearly $P \subseteq L$ since $N(A)D = N(A)H \cap D$ is a nilpotent ideal of $D$ by (1). Suppose on the other hand that $N(A) \subsetneq P$. Since $P$ is the unique minimal prime of $A$ with $P \subseteq A^+$, there exists $y \in A \setminus A^+$ with $yP \subseteq N(A)$. By stability of $P$,
$$ yDP \subseteq yPD \subseteq N(A)D \subseteq L.$$
Hence, since $y \notin L$, 
\begin{equation}\label{loop} P \subseteq L \cap A. \end{equation}
For the reverse inclusion, note that $D/L$ is a finite (left, say) $A/L \cap A$-module, so that 
$$\GKdim (A/L \cap A) = \GKdim(D/L) = n$$
by \cite[Proposition 5.5]{KL} and (7). This forces equality to hold in (\ref{loop}), as $\GKdim (A/P) = n$ by (2), and a proper factor of an affine commutative domain has strictly lower GK-dimension \cite[Proposition 3.15]{KL}.

We now prove (\ref{Lformula}). Since $L \subseteq D^+$ and $L/N(A)D$ is generated by an idempotent in $D/N(A)D$ by (7), 
$$ L/N(A)D \subseteq (\bigcap_{i \geq 1}(D^+)^i + N(A)D)/N(A)D.$$
Hence $L\subseteq \bigcap_{i \geq 1}(D^+)^i + N(A)D$. We prove the reverse inclusion. Since $D/A^+D$ is semisimple by (5) and the image of $A$ is central in $D/L$ by (6) and (\ref{caught}), $D^+/L$ is a polycentral ideal of $D/L$. Hence, by the version of Krull's Intersection Theorem for polycentral ideals, \cite[Theorems 11.2.8 and 11.2.13]{Pa}, $$ \bigcap_{i \geq 1} (D^+)^i \subseteq L, $$ since $D/L$ is a prime ring. This proves the required equality.

\medskip

\noindent (9) We first prove that $D/L$ is commutative. By (\ref{Lformula}), it suffices to prove that $D/(D^+)^i$ is commutative for each $i \geq 1$. Choose elements $ a_1,\ldots,a_m$ of  $A^+$ whose images form a $k$-basis of $A^+/(A^+)^2$. Let $e\in D^+$ be such that $e+A^+D$ is the central idempotent generator of $D^+/A^+D$ guaranteed by (5). Then
$$ D^+/(D^+)^2 = (De + \sum_j Da_j + (D^+)^2)/(D^+)^2 = (ke + \sum_j ka_j + (D^+)^2)/(D^+)^2. $$
Since the quotient $D^+/(\sum_j ka_j + (D^+)^2)$ is a factor of $D^+/A^+D$, it is idempotent and, being also a factor of $D^+/(D^+)^2$, it must be zero, so  
\begin{equation}\label{spanner} D^+=\sum_j ka_j + (D^+)^2.
\end{equation}
Therefore, for each $i\geq 2$, 
\begin{equation}\label{span} D^+/(D^+)^i \textit{ is spanned by monomials of length at most } i-1 \textit{ in } a_1,\ldots,a_m.
\end{equation} 
Hence $D/(D^+)^i$ is commutative, as required. Thus $D/L$ is an affine commutative domain, since $L$ is prime and $D$ is a finite $A$-module. That the global dimension of $D/L$ is $n$ follows from (7) and the fact that $L$ is a minimal prime.
\medskip

\noindent (10) First note that $N(A)$ and $D^+$ are left $H$-stable by (1) and (4)(iii) respectively, hence $L$ is left $H$-stable by (\ref{Lformula}). Moreover, $N(A)H$ and $D^+H$ are Hopf ideals of $H$ again by parts (1) and (4)(iii). Thus $\cap_i(D^+H)^i = (\cap_i (D^+)^i)H$ is a Hopf ideal by Lemma \ref{powers}. Therefore, by (\ref{Lformula}), $$ LH = N(A)H + \bigcap_i(D^+H)^i $$ is a Hopf ideal of $H$.

\medskip

\noindent (i) By faithful flatness of $H$ over $D$, as ensured by (4)(iii),  
\begin{equation}\label{hike} LH \cap D = L.
\end{equation}
Therefore, $D/L$ is a left coideal subalgebra of $H/LH$. By (\ref{caught}), $A/P$ embeds in $H/LH$, and this map is a homomorphism of Hopf algebras whose image is contained in $D/L$. For the rest of the proof of (10), we write $A' := A/P$, $D':=D/L$ and $H':=H/LH$, so that $A' \subseteq D' \subseteq H'$ by (\ref{caught}) and (\ref{hike}), with $A'$ a Hopf subalgebra and $D'$ a left coideal subalgebra of $H'$, and with $D'$ an affine commutative domain by (9).

\medskip

\noindent(ii) By (4)(iii) the left adjoint action of $H$ on itself preserves $D$ and, since $L$ is left $H$-stable, this induces a left adjoint action of $H'$ on $D'$. Let $a \in A'^+$ and $d \in D'$. Then, by commutativity of $D'$ and the fact that $A'$ is a Hopf subalgebra, 
\begin{equation}\label{grab} ad_{\ell}(a')(d') = \sum a'_1 d' S(a'_2) = \sum a'_1 S(a'_2) d' = \epsilon(a') d'  = 0 
\end{equation}
for all $a' \in A'^+$ and $d' \in D'$. Thus this left adjoint action on $D'$ factors through $H'A'^+ = A'^+ H'$.

Since $H'/A'^+ H'$ is semisimple and cosemisimple and $D'$ is a commutative domain by (9), this action in turn factors through an inner faithful group action by \cite[Theorem 2]{Sk17}, say $H'/I\cong k\Lambda$ for some finite group $\Lambda$ and some Hopf ideal $I$ of $H'$ that annihilates $D'$ under the left adjoint action.
However, by (4)(iii) the left adjoint action of $H$ on $A'$ factors through $D^+H$ with $H/D^+ H \cong k\Gamma$ acting inner faithfully. And, since $A' \subseteq D'$, it follows that
\begin{equation}\label{beech} I \subseteq D'^+ H'
\end{equation} 
and we have a Hopf epimorphism $k\Lambda\twoheadrightarrow k\Gamma$. This proves (ii).

%\medskip
%
%\noindent(iii) Consider $H'$ as a right $H'$-module under the right adjoint action. Since $D'$ is a commutative left coideal subalgebra of $H'$,  $D'$ is annihilated by $D'^+$ under this action by a calculation parallel to (\ref{grab}): for $d \in D'^+, \, d' \in D'$,
%$$ ad_r(d)(d') = \sum S(d_1)d' d_2 = \sum S(d_1)d_2 d' = \epsilon(d)d' = 0.$$
%Therefore, $D'$ is annihilated by the Hopf ideal $D'^+H'$ of $H'$. Consider $W:=\lbrace h'\in H': ad_r(D'^+H')(h')=0 \rbrace$. This is an $ad_r(H')$-submodule of $H'$ containing $D'$, and hence is a module over $H'/D'^+H'\cong k\Gamma$ by (\ref{kGamma}), that is the right adjoint action of $H'$ on $W$ is determined by the elements of $\Gamma$. In view of (\ref{beech}) we can also factor by $I$ in considering the action on $W$

%Let $h\in H'$ such that $h+I=\lambda+I$ for some $\lambda\in \Lambda$, and let $d\in D'$. Then
%\begin{eqnarray*}\label{Drightinv}
% ad_r(h)d &=& ad_r (h + I)d\\
%&=& ad_r(\lambda + I)d\\
%&=& \lambda^{-1} d \lambda\\
%&=&ad_{\ell}(\lambda^{-1})d,
%\end{eqnarray*}
%and the final entry on the right belongs to $D'$ by (10)(ii). This proves (iii). Finally, the factor $D'^+H'/I$ equals $0$, since it is spanned as a left ideal of $H'/I$ by elements $x - 1$ where $x$ is a group like mapping to $1_{\Gamma}$, and a calculation like the above shows that such elements annihilate $D'$ on the left.

\medskip

\noindent (11) Only the parts involving $Q_1$ remain to be proved. That $LH \subseteq Q_1$ follows by essentially the same argument as applied to $P$ in (8) - namely, there exists $z\in D \setminus D^+$ with $zL = \{0 \}$, so $ z(HL) = z(LH) = \{0\} \subseteq Q_1,$ with $z \notin Q_1$, so that $L \subseteq Q_1\cap D$. And extending the argument along the lines of (8) yields $Q_1 \cap D = L$.

\medskip
\noindent (12) Since $LH$ is a Hopf ideal by (10), we may consider the coinvariants $E$ of the Hopf surjection $H\twoheadrightarrow H/LH$. On one hand, it follows from (\ref{Lformula}) that $LH\subseteq D^+H$, hence we must have $E = H^{\co H/LH} \subseteq H^{\co k\Gamma} = D$ by (\ref{kGamma}). On the other hand, since $D\subseteq H$ is faithfully flat, it follows from (\ref{caught}) that $LH\cap A = P$, hence $B=A^{\co A/P} \subseteq H^{H/LH} = E$. Furthermore, $E$ is invariant under the left adjoint action of $H$ by \cite[Lemma 3.4.2(2)]{Mont}.

\medskip
\noindent (13) Suppose that $H$ is pointed. By \cite[Theorem]{Mas91} $H$ is faithfully flat over its left coideal subalgebra $E$. As was pointed out in Remark \ref{propremarks}(4), by \cite[Corollary 4.3]{Schn92}, also stated as \cite[Theorem 8.4.8]{Mont}, $ H \cong  A \#_\sigma \overline{H}$, for some cocycle $\sigma$. Similarly, $H/LH$ is pointed, so the inclusion $D/L \subseteq H/LH$ yields the decomposition 
$$H/LH \cong (D/L)\#_\tau \Gamma.$$ 
Moreover, this crossed product by a finite group acting faithfully on the commutative domain $D/L$ is prime by \cite[Corollary 12.6]{Pa2}.
\end{proof}

\begin{Rmk}\label{picture} Consider again, in the light of the above theorem, the exact sequence of algebras suggested in \cite[$\S$ 6]{LWZ}, and discussed here at the start of $\S$\ref{prime2} and in Remark \ref{primermks}(2). One might hope that the true picture underlying Theorem \ref{semi} consists of a sequence of algebras
$$ 1 \longrightarrow E \longrightarrow D \longrightarrow H \longrightarrow k\Gamma \longrightarrow 1,$$
where $E \subseteq D \subseteq H$ and $k\Gamma \cong H/D^+ H$, with $H$ and $k\Gamma$ Hopf algebras, $E$ and $D$ left adjoint invariant left coideal subalgebras of $H$ with $E$ finite dimensional, with
$$ D/E^+ D \textit{ an affine commutative domain, left coideal subalgebra of } H/E^+ H,$$
and 
$$ H/E^+ H \cong (D/E^+ D) \#_{\tau} k \Gamma, $$
a prime crossed product. 

That is, in crude terms, such a Hopf algebra $H$ in $\mathcal{CSC}(k)$ would be
$$ \textit{(finite dimensional)-by-(prime crossed product)}$$
where the prime crossed product is of a commutative domain acted on by a finite group algebra. Obstructions to confirming this description include the absence of positive answers to the following, where we retain the notation of Theorem \ref{semi}:

\begin{Qtns}\label{stuck} (1) Is $E$ a finite dimensional algebra?

\noindent (2) Is $H/LH$ a prime crossed product, with $LH = Q_1$ and $H/LH \cong (D/L)\#_{\tau} \Gamma$, if $H$ is not pointed?
\end{Qtns}
\end{Rmk}

\medskip

When $H$ is prime, the first of these issues disappears, yielding:

\begin{Cor}\label{primesemis} Let $A \subseteq H \in\mathcal{CSC}(k)$, with $H$ prime.  Then, after replacing $A$ by a larger smooth commutative affine domain $D$ which is a left $H$-invariant left coideal subalgebra of $H$,
\begin{enumerate}
\item $H/D^+ H \cong k\Gamma$ for a finite group $\Gamma$ whose order is a unit in $k$;
\item There exists a group $\Lambda$ which acts faithfully on $D$ via the left adjoint action; and the group algebra $k\Lambda$ maps surjectively onto $k\Gamma$.
\item Suppose in addition that $H$ is pointed. Then $H$ is a crossed product of $D$ by $k \Gamma$, that is, $H\cong D\#_\sigma k\Gamma$ for some cocycle $\sigma$.
\end{enumerate}
\end{Cor}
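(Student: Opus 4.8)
The plan is to read Corollary \ref{primesemis} off from Theorem \ref{semi} by specialising all of its data to the case of a prime ring, in which there is a unique minimal prime ideal, namely $\{0\}$. So assume $H$ is prime; then in the notation of Theorem \ref{semi} we have $t = 1$ and $Q_1 = \{0\}$. First I would run this through the chain
$$ N(A)H \;\subseteq\; PH \;\subseteq\; LH \;\subseteq\; Q_1 $$
from Theorem \ref{semi}(11), which forces $N(A)H = PH = LH = \{0\}$. Intersecting with $A$ and using $PH \cap A = P$ from part (3) yields $N(A) = P = \{0\}$, so that $A$ is already a domain; intersecting with $D$ and using $Q_1 \cap D = L$ from part (11) yields $L = \{0\}$. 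Therefore the affine commutative domain $D/L$ of global dimension $n = \GKdim(H) < \infty$ produced by Theorem \ref{semi}(9) is just $D$ itself. Since $A \subseteq D$ by the chain (\ref{chain}), and $D$ --- being a submodule of the finite module $H$ over the noetherian ring $A$ --- is a finite, hence affine, $A$-module, this $D$ is the promised larger smooth commutative affine domain; it is a left coideal subalgebra of $H$ invariant under the left adjoint action of $H$ by Theorem \ref{semi}(4)(iii).

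With $D$ so identified, I would obtain the three numbered assertions as direct specialisations. For (1), Theorem \ref{semi}(4)(iii) already provides the Hopf isomorphism $H/D^+ H \cong k\Gamma$; and $k\Gamma$, being a Hopf algebra quotient of the semisimple algebra $\overline{H}$, is semisimple, so Maschke's theorem forces $\charr k \nmid |\Gamma|$, i.e.\ $|\Gamma|$ is a unit in $k$. For (2), I would apply Theorem \ref{semi}(10)(ii) with $L = \{0\}$: the left adjoint action of $H = H/LH$ on $D = D/L$ factors through an inner faithful action of a finite group $\Lambda$ admitting a surjection $\Lambda \twoheadrightarrow \Gamma$, and since inner faithfulness of a group-algebra action coincides with faithfulness of the underlying group action, $\Lambda$ acts faithfully on $D$ via the left adjoint action and $k\Lambda$ maps onto $k\Gamma$. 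For (3), assuming in addition that $H$ is pointed, Theorem \ref{semi}(13) gives $H/Q_1 \cong (D/L) \#_\tau \Gamma$, which (since $Q_1 = \{0\}$ and $L = \{0\}$) reads $H \cong D \#_\sigma k\Gamma$ upon renaming $\tau$ as $\sigma$.

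I do not expect any real obstacle: the substantive work is entirely contained in Theorem \ref{semi}, and the corollary merely records what survives once primeness collapses the ideals $N(A)H = N(H)$, $PH$, $LH$ and $Q_1$ simultaneously to zero, so that $A$ becomes a domain and $D$ becomes a bona fide affine commutative domain rather than an algebra one of whose factors is such. The single point needing a moment's care is the very first step --- that primeness forces $L = \{0\}$ and not merely $LH = \{0\}$ --- which is exactly where the faithful flatness of $H$ over $D$, in the guise of the identity $Q_1 \cap D = L$, is indispensable.
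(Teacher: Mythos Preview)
Your proposal is correct and follows essentially the same approach as the paper's own proof, which simply observes that primeness forces $Q_1 = \{0\}$, deduces $P = L = \{0\}$ from Theorem \ref{semi}(11) and (\ref{caught}), and then declares the corollary a special case of the theorem. You spell out in more detail which parts of Theorem \ref{semi} yield each numbered assertion (including the Maschke argument for $|\Gamma|$ being a unit and the identification of inner faithfulness with faithfulness for group actions), but the logical route is identical.
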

\begin{proof}
The hypothesis that $H$ is prime implies that, in the notation of Theorem \ref{semi}, $Q_1 = \{0\}$. Thus, $P =L =\{0\}$ by Theorem \ref{semi}(11) and (\ref{caught}). Thus $D$ is a commutative affine domain containing $A$, and the corollary is a special case of the theorem.
\end{proof}

\subsection{Examples and consequences}\label{excon} The first of the following two simple examples shows that the inclusions of (\ref{chain}) can all be strict. The second shows that even when $H$ is prime $D$ may strictly contain $A$.

\begin{Exs}\label{fingroup} Let $k$ be algebraically closed of characteristic 0.

\noindent (1) Let $G = (\langle x \rangle \times S_3) \rtimes C_2,$ where $S_3$ is the symmetric group on 3 symbols and $\langle x \rangle$ is the infinite cyclic group. Let $\sigma$ and $\beta$ be respectively a 3-cycle and a 2-cycle in $S_3$ and let $a$ be a generator of $C_2$, with $C_2$ acting trivially on $S_3$ and acting on $\langle x \rangle$ by $a\cdot x = x^{-1}$. Let $H = kG$ and $A = k(\langle x \rangle \times \langle \sigma \rangle)$. Thus $A \subseteq H \in \mathcal{CSC}(k)$, with $\overline{H}=k(\langle \beta \rangle \times C_2)$. 

Then, $$ P = (\sigma - 1)A, \qquad B = k\langle \sigma \rangle. $$ The $H$-action on $A/P\cong k\langle x\rangle$ factors through $\Gamma \cong C_2$, so $$ D = k(\langle x \rangle \times S_3), \qquad L = PD + (\beta-1)D. $$
%
%Also, $$ Q_1 = LH = PH + (\beta - 1)H. $$
%
Moreover, $D/L \cong k\langle x \rangle$ and $E=kS_3$.
Thus in this case $$ C \subsetneq B \subsetneq A \subsetneq D \subsetneq H, \qquad B\subsetneq E\subsetneq D
\quad \text{ and } \quad PH \subsetneq LH = Q_1 = E^+H, $$
with $$ H \cong A\# \overline{H} \cong D \# \Gamma \quad \text{ and } \quad A/P \cong D/L \cong k \langle x \rangle. $$

\medskip

\noindent (2) Let $H = k[x^{\pm 1}]$ with $x$ grouplike, and let $A = k[x^{\pm 2}]$. Then $A \subseteq H \in \mathcal{CSC}(k)$, with $\overline{H} = kC_2$. In this case $D = H$ and $\Gamma = \{1\}$, but $A \subsetneq D$.

\end{Exs}

\medskip

Bringing together the description of prime algebras in $\mathcal{CSC}(k)$ from Corollary \ref{primesemis} with the Clifford-theoretic analysis of Theorem \ref{bound} yields 

\begin{Thm}\label{dims} Let $A \subseteq H \in \mathcal{CSC}(k)$, with $H$ prime, and let $D$ and $\Gamma$ be as in Corollary \ref{primesemis}. Let $V$ be a simple left $H$-module, and choose $\mathfrak{m} \in \mathrm{Maxspec}(D)$ with $\mathrm{Ann}_V(\mathfrak{m}) \neq \{0\}$.
\begin{enumerate}
\item $|\Gamma : C_{\Gamma}(\mathfrak{m})| \leq \mathrm{dim}_k (V) \leq |\Gamma |,$ where $C_{\Gamma}(\mathfrak{m}) := \{\gamma \in \Gamma : \mathfrak{m}^{\gamma} = \mathfrak{m}\}$.
\item $\mathrm{PI-degree}(H) = |\Gamma |.$ In particular, the maximum dimension of simple $H$-modules is $\Gamma$, and the simple $H$-modules of dimension $|\Gamma|$ are induced from simple $D$-modules.
\item If $H$ is pointed then (1) can be strengthened: there exists $\ell \leq |C_{\Gamma}(\mathfrak{m})$ such that 
$$ \mathrm{dim}_k (V) = \ell |\Gamma : C_{\Gamma}(\mathfrak{m})|,$$
with $V$ a free $A/\mathfrak{m}^{\Gamma}$-module of rank $\ell$.
\end{enumerate}
\end{Thm}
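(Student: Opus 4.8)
The plan is to combine the strongly-graded structure of $H$ over $D$ with Clifford theory for graded rings, using Theorem \ref{bound} twice: once to embed a Frobenius factor of $A$ into a given simple module, and once to produce a simple module of maximal dimension. First I would fix the graded picture. Since $H$ is prime, $N(A)=P=\{0\}$ by Proposition \ref{Hprime}, so $A$ and $D$ are commutative affine domains and the adjoint action of $\overline{H}$ on $A$ factors through the inner faithful action of $\Gamma$ on $\mathrm{Maxspec}(A)$ (Theorem \ref{semi}(4)(iii)). By Corollary \ref{primesemis}, $H/D^+H\cong k\Gamma$ with $H$ left and right faithfully flat over $D$, so $D\subseteq H$ is a $k\Gamma$-Galois extension; equivalently $H=\bigoplus_{\gamma\in\Gamma}H_\gamma$ is strongly $\Gamma$-graded with $H_e=D$ and each $H_\gamma$ an invertible $(D,D)$-bimodule. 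Since $D$ is a domain, each $H_\gamma$ determines an automorphism $\sigma_\gamma$ of $D$, and $\gamma\mapsto\sigma_\gamma$ is the $\Gamma$-action $\mathfrak m\mapsto\mathfrak m^\gamma$ on $\mathrm{Maxspec}(D)$ appearing in the statement (when $H$ is pointed, $H\cong D\#_\sigma k\Gamma$ and this is conjugation by the grading units). Finally, being a finite faithfully flat module over the noetherian domain $D$, $H$ is a projective $D$-module of constant rank $\dim_k(H/D^+H)=|\Gamma|$ and is $D$-torsion-free on both sides.

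For the upper bounds, left multiplication embeds $H$ in $\mathrm{End}_D(H_D)$, which localises to a ring embedding $H\hookrightarrow M_{|\Gamma|}(Q(D))$ over the field $Q(D)$; the Amitsur--Levitzki theorem then gives $\mathrm{min.deg}(H)\le 2|\Gamma|$, hence $\mathrm{rep.PI.deg}(H)\le|\Gamma|$ by $\S\ref{back}$ and $\dim_k V\le|\Gamma|$ for every simple $V$. For the lower bound in (1) and the formula in (3) I would run Clifford theory for the strongly graded ring $H$: fixing a simple $V$ and $\mathfrak m$ with $V_{\mathfrak m}:=\mathrm{Ann}_V(\mathfrak m)\ne 0$, set $\ell:=\dim_k V_{\mathfrak m}$. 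Since $H_\gamma\otimes_D(-)$ is a dimension-preserving self-equivalence of $D$-modules and $H_\gamma$ carries the twist $\sigma_\gamma$, a short computation gives $H_\gamma V_{\mathfrak m}=V_{\mathfrak m^\gamma}$, so each $V_{\mathfrak m^\gamma}$ has dimension $\ell$ and $\sum_\gamma V_{\mathfrak m^\gamma}=\bigoplus_{\mathfrak n\in\Gamma\cdot\mathfrak m}V_{\mathfrak n}$ (a direct sum, as distinct maximal ideals of $D$ give disjoint socles) is a non-zero $H$-submodule, hence all of $V$. Thus $V$ is semisimple as a $D$-module, $\dim_k V=\ell\,|\Gamma:C_\Gamma(\mathfrak m)|$, and $V$ is free of rank $\ell$ over $D/\mathfrak m^{(\Gamma)}$, where $\mathfrak m^{(\Gamma)}:=\bigcap_\gamma\mathfrak m^\gamma$ and $\dim_k(D/\mathfrak m^{(\Gamma)})=|\Gamma:C_\Gamma(\mathfrak m)|$. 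To bound $\ell$ I would put $H_0:=\bigoplus_{\gamma\in C_\Gamma(\mathfrak m)}H_\gamma$: since $\mathfrak m$ is $C_\Gamma(\mathfrak m)$-stable, $\mathfrak m H_0=H_0\mathfrak m$ is two-sided and $H_0/\mathfrak m H_0$ is a twisted group algebra $k^{\bar\sigma}C_\Gamma(\mathfrak m)$ over $k$, which is semisimple because $|C_\Gamma(\mathfrak m)|$ is a unit in $k$; as $V_{\mathfrak m}$ is a simple module over it, $\ell\le|C_\Gamma(\mathfrak m)|$.

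It remains to show $\mathrm{PI-degree}(H)=|\Gamma|$, for which I need a simple module of dimension exactly $|\Gamma|$. Here I would choose $\mathfrak q\in\mathrm{Maxspec}(A)$ with trivial $\Gamma$-stabiliser — such $\mathfrak q$ exist because $\Gamma$ acts faithfully on the irreducible variety $\mathrm{Maxspec}(A)$ — so that $A/\mathfrak q^{(\overline{H})}=A/\mathfrak q^{(\Gamma)}\cong k^{|\Gamma|}$. The ideal $H\mathfrak q^{(\Gamma)}$ is proper by faithful flatness of $H$ over $A$ (Theorem \ref{properties}(7)), so the non-zero finite-dimensional algebra $H/H\mathfrak q^{(\Gamma)}$ has a simple module $V$, which is a simple $H$-module with $\mathfrak q^{(\Gamma)}\subseteq\mathrm{Ann}_A(V)$; by Theorem \ref{bound}(1), $\mathrm{Ann}_A(V)=\mathfrak m^{(\overline{H})}$ for a maximal ideal $\mathfrak m\supseteq\mathfrak q^{(\Gamma)}$, which forces $\mathfrak m^{(\overline{H})}=\mathfrak q^{(\Gamma)}$, and then Theorem \ref{bound}(2) embeds $A/\mathfrak q^{(\Gamma)}$ in $V$, so $\dim_k V\ge|\Gamma|$ and hence $=|\Gamma|$. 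With the upper bound this gives $\mathrm{PI-degree}(H)=\mathrm{rep.PI.deg}(H)=|\Gamma|$. For the last clause of (2): if $\dim_k V=|\Gamma|$ then $\ell\,|\Gamma:C_\Gamma(\mathfrak m)|=|\Gamma|$ with $\ell\le|C_\Gamma(\mathfrak m)|$; since a twisted group algebra of a non-trivial finite group has no simple module of dimension equal to the group order, $C_\Gamma(\mathfrak m)$ must be trivial and $\ell=1$, whence the surjection $H\otimes_D(D/\mathfrak m)\to V$ is an isomorphism and $V$ is induced from the simple $D$-module $D/\mathfrak m$.

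The main obstacle is the lower bound $\mathrm{PI-degree}(H)\ge|\Gamma|$: the matrix embedding for the upper bound and the Clifford analysis of a fixed $V$ are fairly mechanical once the strongly-graded structure is in place, but producing a simple $H$-module of full dimension forces one to locate a free $\Gamma$-point of $\mathrm{Maxspec}(A)$ and push it through Theorem \ref{bound}, keeping careful track of the fact that the $\Gamma$-action on $\mathrm{Maxspec}(A)$ relevant there is the adjoint action of $\overline{H}$ — which factors through $\Gamma$ precisely because $H$ prime gives $P=\{0\}$, so that $\mathfrak q^{(\overline{H})}=\mathfrak q^{(\Gamma)}$.
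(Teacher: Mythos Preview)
Your argument is correct, and in two respects it diverges from the paper's proof in ways worth noting.

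For part (2), the paper works entirely over $D$: it invokes the Galois extension $Q(D^{\Gamma})\subseteq Q(D)$, generic freeness, smoothness of $D$, and generic unramifiedness of $D^{\Gamma}\subseteq D$ to locate $\widehat{\mathfrak m}\in\mathrm{Maxspec}(D)$ with a regular $\Gamma$-orbit, then appeals to (1). Your route is more elementary: since $\Gamma$ acts faithfully on the irreducible variety $\mathrm{Maxspec}(A)$, the fixed loci $\{X^{\gamma}:\gamma\neq 1\}$ are proper closed subsets whose union cannot cover $X$, so a regular point $\mathfrak q$ exists; then Theorem~\ref{bound} applied to $A$ gives a simple $V$ with $\dim_k V\geq |\Gamma|$. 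This avoids the algebraic-geometric input (separability, unramifiedness) entirely. The paper's argument, on the other hand, stays within $D$ and so meshes directly with the Clifford picture set up in (1).

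For parts (1) and (3), the paper obtains the lower bound in (1) by embedding the Frobenius algebra $D/\mathfrak m^{\Gamma}$ into $V$ as in Theorem~\ref{bound}(2), and proves (3) only under the pointed hypothesis, using the crossed-product units $\widehat{\gamma}_i$ to show $\widehat{\gamma}_i\,\mathrm{Ann}_V(\mathfrak m)=\mathrm{Ann}_V(\mathfrak m^{\widehat{\gamma}_i})$. Your strongly-graded Clifford argument subsumes both: the $k\Gamma$-Galois extension $D\subseteq H$ makes $H=\bigoplus_{\gamma}H_{\gamma}$ strongly $\Gamma$-graded with each $H_{\gamma}$ an invertible $D$-bimodule, so $H_{\gamma}V_{\mathfrak m}=V_{\mathfrak m^{\gamma}}$ and $V=\bigoplus_{\mathfrak n\in\Gamma\cdot\mathfrak m}V_{\mathfrak n}$ with all summands of equal dimension. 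This yields the formula $\dim_k V=\ell\,|\Gamma:C_{\Gamma}(\mathfrak m)|$ without assuming $H$ pointed, which is stronger than what the paper states in (3). Your bound $\ell\leq|C_{\Gamma}(\mathfrak m)|$ via the twisted group algebra $H_0/\mathfrak m H_0$ is also cleaner than extracting it from the upper bound. The paper's approach has the virtue of reusing the Frobenius-embedding machinery already developed in Theorem~\ref{bound}, keeping the exposition uniform; yours buys a sharper statement at the cost of importing the graded Clifford theory.
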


\begin{proof} (1) The existence of $\mathfrak{m}$ follows as in the proof of Theorem \ref{bound}(1), with $A$ replaced by $D$, noting that in the present situation $\overline{H}$ is replaced by $k \Gamma$, and 
$$ \mathfrak{m}^{(\overline{H})} = \mathfrak{m}^{H} = \bigcap\{ \mathfrak{m}^{\gamma} : \gamma \in \Gamma \}. $$
Since $D/\mathfrak{m}^{\Gamma}$ is a direct sum of copies of $k$, an easier version of the proof of Theorem \ref{bound}(2) shows that $D/\mathfrak{m}^{\Gamma}$ embeds in $V$ as a $D$-submodule, proving the first inequality in (1).

By Theorem \ref{semi}(4)(iii) and Corollary \ref{primesemis}(1), $H$ is a locally free module of rank $|\Gamma|$ over the affine commmutative domain $D$, so that
$$ \mathrm{dim}_k (H/H\mathfrak{m}) = |\Gamma|. $$
Since the $H$-module $V$ is a factor of $H/H\mathfrak{m}$, (1) is proved.

\medskip

\noindent (2) Since $\Gamma$ acts on $D$ by $k$-algebra automorphisms, the extension $Q(D^{\Gamma}) \subseteq Q(D)$ is Galois with Galois group $\Gamma$. Hence $\mathrm{dim}_{Q(D^{\Gamma})} Q(D) = |\Gamma|$ by Galois theory. By generic freeness, \cite[Theorem 14.4]{Ei}, there is a non-empty open subset $\mathcal{P}$ of $\mathrm{Maxspec}(D^{\Gamma})$ such that, for $\mathfrak{p} \in \mathcal{P}$, $D_{\mathfrak{p}}$ is $D^{\Gamma}_{\mathfrak{p}}$-free of rank $|\Gamma|$. Because the field extension $Q(D^{\Gamma} \subseteq D$ is Galois, it is separable. Moreover $D$ is smooth by Theorem \ref{semi}(9). Therefore, by \cite[Theorem 7 of Ch.II, $\S$5]{Sh}, the inclusion $D^{\Gamma} \subseteq D$ is generically unramified. That is, there exists $\mathfrak{p} \in \mathcal{P}$ such that $\mathfrak{p}D$ is semimaximal with $\mathrm{dim}_k (D/\mathfrak{p}D) = |\Gamma|$. Let $\widehat{\mathfrak{m}} \in \mathrm{Maxspec}(D)$ with $\mathfrak{p} \subseteq \widehat{\mathfrak{m}}$. By classical invariant theory, \cite[Chap.1]{Be},
$$ \Gamma-\mathrm{orbit} (\widehat{\mathfrak{m}}) = \{ \mathfrak{m} \in \mathrm{Maxspec}(D) : \mathfrak{p} \subseteq \mathfrak{m}\}.$$ 
Thus $| \Gamma-\mathrm{orbit} (\widehat{\mathfrak{m}})| = |\Gamma|$. Therefore (1) implies (2).

\medskip
\noindent (3) Suppose that $H$ is pointed. Then $H $ is a crossed product by Corollary \ref{primesemis}(3), so there are units $\widehat{\gamma}_i$ in $H$ such that $H = \oplus_i D\widehat{\gamma}_i$, and it follows easily that, for all $i$,
$$ \mathrm{Ann}_V(\mathfrak{m}^{\widehat{\gamma}_i}) = \widehat{\gamma}_i \mathrm{Ann}_V(\mathfrak{m}). $$
Moreover, $V$ is a semisimple $A/\mathfrak{m}^{\Gamma}$-module and hence 
$$ V = \bigoplus_i \mathrm{Ann}_V(\mathfrak{m}^{\widehat{\gamma}_i}) . $$
Setting $\ell = \mathrm{dim}_k (\mathrm{Ann}_V(\mathfrak{m}))$ therefore gives (3).
\end{proof}

\section*{Acknowledgements} Some of this research will form part of the PhD thesis of the second author at the University of Glasgow. His PhD is funded by the Portuguese Foundation for Science and Technology Fellowship SFRH/BD/102119/2014, to whom we are very grateful. The research of the first author was supported in part by Leverhulme Emeritus Fellowship EM-2017-081. 

We thank Stefan Kolb (Newcastle), Uli Kr\"{a}hmer (Dresden), Christian Lomp (Porto) and Xingting Wang (Howard) for very helpful comments.

\end{document}